\newcommand{\cB}{{\mathcal B}}
\newcommand{\cD}{{\mathcal D}}
\newcommand{\cE}{{\mathcal E}}
\newcommand{\cF}{{\mathcal F}}
\newcommand{\cH}{{\mathcal H}}
\newcommand{\cK}{{\mathcal K}}
\newcommand{\cL}{{\mathcal L}}
\newcommand{\cO}{{\mathcal O}}
\newcommand{\cQ}{{\mathcal Q}}
\newcommand{\cR}{{\mathcal R}}
\newcommand{\cX}{{\mathcal X}}
\newcommand{\bT}{{\mathbb{T}}}
\newcommand{\sbm}[1]{\left[\begin{smallmatrix} #1
		\end{smallmatrix}\right]}
\newcommand{\bD}{{\mathbb D}}
\newcommand{\bDelta}{{\boldsymbol{\Delta}}}
\newtheorem{thm}{Theorem}[section]
\newtheorem{corollary}[thm]{Corollary}
\newtheorem{lemma}[thm]{Lemma}
\newtheorem{proposition}[thm]{Proposition}
\theoremstyle{definition}
\newtheorem{definition}[thm]{Definition}
\newtheorem{remark}[thm]{Remark}
\numberwithin{equation}{section}
\def\textmatrix#1&#2\\#3&#4\\{\bigl({#1 \atop #3}\ {#2 \atop #4}\bigr)}
\def\dispmatrix#1&#2\\#3&#4\\{\left({#1 \atop #3}\ {#2 \atop #4}\right)}
\numberwithin{equation}{section}
\def\textmatrix#1&#2\\#3&#4\\{\bigl({#1 \atop #3}\ {#2 \atop #4}\bigr)}
\def\dispmatrix#1&#2\\#3&#4\\{\left({#1 \atop #3}\ {#2 \atop #4}\right)}
\begin{document}
\title[Commuting Contractions]{Functional Models for Commuting Hilbert-space Contractions}
\author[J. A. Ball]{Joseph A. Ball}
\address{Department of Mathematics, Virginia Tech, Blacksburg, VA 24061-0123, USA.}
\email{joball@math.vt.edu}
\author[H. Sau]{Haripada Sau}
\address{Tata Institute of Fundamental Research, Centre for Applicable Mathematics, Post Bag No 6503, GKVK Post Office,
Sharada Nagar, Chikkabommsandra, Bangalore 560065, India.}
\email{haripadasau215@gmail.com; sau2019@tifrbng.res.in}
\subjclass{Primary: 47A13. Secondary: 47A20, 47A25, 47A56, 47A68, 30H10}
\keywords{Commutative contractive operator-tuples, Functional model, Unitary dilation, Isometric lift, Spectral set, Pseudo-commutative
contractive lift}
\thanks{The research of the second named author was supported by SERB
Indo-US Postdoctoral Research Fellowship, 2017.}

\begin{abstract} We develop a Sz.-Nagy--Foias-type functional model for a commutative contractive operator tuple
$\underline{T} = (T_1, \dots, T_d)$ having  $T = T_1 \cdots T_d$ equal to a completely
nonunitary contraction.  We identify additional
invariants ${\mathbb G}_\sharp, {\mathbb W}_\sharp$ in addition to the Sz.-Nagy--Foias characteristic function
$\Theta_T$ for the
product operator $T$ so that the combined triple $({\mathbb G}_\sharp, {\mathbb W}_\sharp, \Theta_T)$
becomes a complete unitary
invariant for the original operator tuple $\underline{T}$.   For the case $d \ge 3$ in general there is no commutative
isometric lift of
$\underline{T}$;  however there is a (not necessarily commutative) isometric lift having some additional structure
so that, when
compressed to the minimal isometric-lift space for the product operator $T$, generates a special kind of lift of
$\underline{T}$, herein called
a {\em pseudo-commutative contractive lift} of $\underline{T}$, which in turn leads to the functional model
for $\underline{T}$.  This work has many parallels with recently developed model theories for
symmetrized-bidisk contractions
(commutative operator pairs
$(S,P)$ having the symmetrized bidisk $\Gamma$ as a spectral set) and for tetrablock contractions
(commutative operator triples
$(A, B, P)$ having the tetrablock domain ${\mathbb E}$ as a spectral set).
\end{abstract}

\dedicatory{\textit{Dedicated to the memory of Ron Douglas, a leader and dedicated mentor for the field}}

\maketitle

\section{Introduction} A major development in the theory of nonnormal operator theory was the Sz.-Nagy dilation
theorem ({\em any Hilbert-space contraction operator $T$ can be represented as the compression of a unitary operator
to the orthogonal difference of two invariant subspaces}) and the concomitant Sz.-Nagy--Foias functional model for
a completely nonunitary contraction operator (we refer to \cite{Nagy-Foias} for a complete treatment).
Since then there have been many forays into extensions of the formalism
to more general settings.  Perhaps the earliest was that of And\^o \cite{ando} who showed that any pair of
commuting contractions
can be dilated to a pair of commuting unitary operators, but the construction had no functional form like that of
the Sz.-Nagy--Foias model for the single-operator case and did not lead to a functional model for a commutative
contractive pair.  Around the same time the Commutant Lifting Theorem due to Sz.-Nagy-Foias
\cite{Nagy-Foias} appeared, with a seminal special case due to Sarason \cite{Sar}. It was soon realized that there is a
close connection between the And\^o Dilation Theorem and Commutant Lifting (see \cite[Section 3]{Parrott}).
However in the same paper of Parrott it was shown  that And\^o's result fails for $d$ commuting
contractions as soon as $d \ge 3$.   Arveson \cite{Arveson} gave a general operator-algebraic/function-algebraic formulation of the general problem which also revealed the key role of the property of complete contractivity  as opposed to
mere contractivity for representations of operator algebras.

Since the appearance of \cite{BCL}, much work has focused on the $d$-tuple of coordinate multipliers
$M_{z_1}, \dots, M_{z_d}$ on the Hardy space over the polydisk $H^2_{{\mathbb D}^d}$  as well as the
coordinate multipliers  $M_{\zeta_1}, \dots, M_{\zeta_d}$
on the Lebesgue space over the torus $L^2_{{\mathbb T}^d}$ and variations thereof as models for
commuting isometries, and the quest for Wold decompositions related to
variations of these two simple
examples.  While the most definitive results are for the doubly-commuting case
(see \cite{Mandrekar1988, SarkarLAA,  SSW}),
there has been additional progress developing models to handle more general classes of commuting isometries
\cite{SY, Slo1980, BKS, Burdak}.   One can then study examples of commutative contractive tuples by
studying compressions of such commutative
isometric tuples to jointly coinvariant subspaces (see e.g. the book of Douglas-Paulsen \cite{DougPaul} for an
abstract approach and
work of Yang \cite{Yang-survey}).  This work has led to a wealth of distinct new types of examples
with special features, including strong rigidity results (see e.g. \cite{Douglas-Yan}).  In case the commutative
contractive tuple itself is doubly
commuting, one can get a rather complete functional analogue of the Sch\"affer construction of the minimal unitary dilation
(see \cite{Slo1985, BNS-OaM}).

More recent work of Agler and Young  along with collaborators \cite{awy07, AglerYoung00, AglerYoung03},
inspired by earlier work of
Bercovici-Foias-Tannenbaum \cite{bft91} having motivation from the notion of structured singular-value in Robust
Control Theory (see \cite{DP, bft91}), explored more general domains on which to explore the Arveson program:
a broad overview of this direction is given in Section \ref{S:domains} below.  Followup work by Bhattacharyya
and collaborators  (including the second author of the present manuscript)
\cite{Tirtha14, BLS, B-P-SR, BPJOT, BS-PRIMS, BS-CAOT} as well as of Sarkar \cite{SarkarIUMJ} found analogues of
the Sz.-Nagy--Foias defect operator $D_T = (I - T^* T)^{\frac{1}{2}}$ and a more functional form for the dilation and
model theory results established for  these  more general domains (specifically, the symmetrized bidisk $\Gamma$
and tetrablock domain ${\mathbb E}$ to be discussed below).

The goal of the present paper is to adapt these recent advances in the theory of $\Gamma$- and
${\mathbb E}$-function-theoretic operator
theory to the original And\^o-Parrott setting where the domain is the polydisk ${\mathbb D}^d$ and the
associated operator-theoretic object is
a operator-tuple $\underline{T} = (T_1, \dots, T_d)$ of commuting contraction operators on a Hilbert space
$\cH$. Specifically, we adapt the
definition of  {\em Fundamental Operators}, originally introduced in \cite{B-P-SR} for $\Gamma$-contractions and
then adapted to
${\mathbb E}$-contractions in \cite{Tirtha14} , to arrive at a definition of {\em Fundamental Operators}
$\{F_{j1}, F_{j2} \colon j=1, \dots, d\}$ for a commutative contractive operator tuple $\underline{T} = (T_1, \dots, T_d)$.
We then show that the set of Fundamental Operators can be
jointly Halmos-dilated to another geometric object which we call an {\em And\^o tuple} as it appears implicitly as a
key piece in And\^o's
construction of a joint unitary dilation in \cite{ando} for the pair case. While the set of Fundamental Operators is
uniquely determined by
$\underline{T}$, there is some freedom in the choice of And\^o tuple associated with $\underline{T}$.
With the aid of an And\^o tuple,
we are then able to construct a (not necessarily commutative) isometric lift for $\underline{T}$ which has the form of a
Berger-Coburn-Lebow (BCL) model (as in \cite{BCL}) for a commutative isometric operator-tuple.
While any commutative isometric
operator-tuple can be modeled as a BCL-model, there is no tractable characterization as to which BCL-models
are commutative,
except in the $d=2$ case.    For the $d=2$ case it can be shown that there is an appropriate choice of the And\^o tuple
which leads to a commutative BCL-model---thereby giving a more succinct proof of And\^o's original result.
For the general case where $d \ge 3$, we next show how the noncommutative isometric lift constructed from the
And\^o tuple can be cut down to the minimal isometric-lift space for the single product operator $T = T_1 \cdots T_d$
to produce an analogue
of the single-variable lift for the commutative tuple situation which we call a {\em pseudo-commutative contractive lift} of
$\underline{T}$.  For
the case where $T = T_1 \cdots T_d$ is completely nonunitary, we model the minimal isometric-lift space for $T$ as the
 Sz.-Nagy--Foias functional-model space $\sbm{ H^2(\cD_{T^*}) \\ \overline{\Delta_{\Theta_T} L^2(\cD_T)} }$
for $T$ based on the Sz.-Nagy--Foias characteristic function $\Theta_T$ for $T$ and we arrive at a functional model
for the whole commutative
tuple $\underline{T} = (T_1, \dots, T_d)$ consistent with the standard Sz.-Nagy--Foias model for the product operator
$T = T_1 \cdots T_d$.
Let us mention that the basic ingredients of this model already appear in the work of Das-Sarkar-Sarkar \cite{D-S-S}
for the pure-pair case
($d=2$ and $T = T_1 T_2$ has the property that $T^{*n} \to 0$ strongly as $n \to \infty$).
This leads to the identification of additional unitary invariants (in addition to the  characteristic function $\Theta_T$) so
that the whole collection $ \{ {\mathbb G}, {\mathbb W}, \Theta_T\}$
(which we call a {\em characteristic triple} for the commutative contractive tuple $\underline{T}$) is a complete
unitary invariant for $\underline{T}$
for the case where $T = T_1 \cdots T_d$ is completely nonunitary.  Here
${\mathbb G} = \{ G_{j1}, G_{j2}  \colon j=1, \dots, d\}$ consists of the
Fundamental Operators for the adjoint tuple $\underline{T}^* = (T_1^*, \dots, T_d^*)$ and
${\mathbb W} = \{ W_{\sharp 1}, \dots, W_{\sharp d} \}$ consists of a canonically constructed commutative
unitary tuple of multiplication operators on the Sz.-Nagy--Foias
defect model space $\overline{ \Delta_{\Theta_T} \cdot L^2(\cD_{\Theta_T})}$ with product equal to multiplication
by the coordinate
$M_\zeta$ on  $\overline{ \Delta_{\Theta_T} \cdot L^2(\cD_{\Theta_T})}$, all of which is vacuous for the case where
$T = T_1 \cdots T_d$ is pure.  Let us also mention that we obtain an analogue of the Sz.-Nagy--Foias
canonical decomposition for a contraction operator, i.e.:
any commutative contractive operator tuple $\underline{T}$ splits as an orthogonal direct sum
$\underline{T} = \underline{T}_u \oplus \underline{T}_c$ where $\underline{T}_u$ is a commutative unitary
operator-tuple and $\underline{T}_c$ is a commutative contractive operator
tuple with  $T = T_1 \cdots T_d$ completely nonunitary.  As the unitary classification
problem for commutative unitary tuples
can be handled by the spectral theory for commuting normal operators (see \cite{Dix, Arv}), the results for the case where
 $T = T_1 \cdots T_d$ is completely nonunitary combined with the spectral theory for the commutative
unitary case leads to a model theory and unitary classification theory for the general class of commutative
contractive operator-tuples.

Let us mention that Bercovici-Douglas-Foias \cite{BDF1, BDF2, BDF3} have also recently obtained a wealth of
structural information concerning commutative contractive tuples.  This work also builds off the BCL-model
for the commutative isometric case, but also derives additional insight concerning the BCL-model itself.
There also appears the notion of {\em characteristic function} for a commutative contractive operato-tuple,
but this is quite different from our notion of characteristic function (simply the Sz.-Nagy-Foias
characteristic function of the single operator equal to the product $T = T_1 \cdots T_d$).

The paper is organized as follows.   After the present Introduction, Section \ref{S:prelim} on preliminaries provides (i)
a reference for some standard notations to be used throughout, (ii) a review of the rational dilation problem, especially
in the context of  the specific domains $\Gamma$ (symmetrized bidisk) and ${\mathbb E}$ (tetrablock domain),
 including some discussion on how these domains arise from specific examples of the structured singular
 value arising in Robust Control theory,
(iii)  some background on Fundamental Operators in the setting of the symmetrized bidisk, along with  some additional information
(iv) concerning  Berger-Coburn-Lebow models for  commutative isometric-tuples \cite{BCL}  and (v) concerning the Douglas approach
\cite{Doug-Dilation} to the Sz.-Nagy--Foias model theory which will be needed in the sequel.
Let us also mention that the present manuscript is closely related to our companion paper \cite{BS-Memoir} where the results of the present
paper are developed directly for the pair case ($\underline{T} = (T_1, T_2)$ is a commutative contractive operator-pair) from a more
general point of view where additional details are developed.  Finally this manuscript and \cite{BS-Memoir}
subsume the preliminary report
\cite{sauAndo} posted on arXiv.

\smallskip

\textbf{Acknowledgement:}  Finally let us mention that this paper is dedicated to the memory of Ron Douglas, a role
model and inspiring mentor for us.
Indeed it is his approach to the Sz.-Nagy--Foias model theory in \cite{Doug-Dilation} which was a key
intermediate step in our development of the multivariable version appearing here.  In addition his recent work with
Bercovici and Foias \cite{BDF1, BDF2, BDF3} has informed our work as well.

\section{Preliminaries}   \label{S:prelim}

\subsection{Notation}  \label{S:notation}
We here provide a reference for a core of common notation to be used throughout the paper.

Given an operator $A$ on a Hilbert space $\cX$, we write

\smallskip
\noindent
\begin{itemize}

\item $\nu(A)$ = {\em numerical radius} of $A $= $\sup \{ | \langle A x, x \rangle_\cX |
\colon x \in \cX \text{ with } \| x \| = 1\}$.

\smallskip

\item $\rho_{\rm spec}(A)$ = {\em spectral radius} of  $A$ =
$ \sup \{ |\lambda| \colon \lambda \in {\mathbb C} \text{ and } \lambda I - A \text{ not invertible}\}$.

\smallskip

\item If $T \in \cL(\cX)$ with $\| T \| \le 1$, then $D_T$ denotes the {\em defect operator} of $T$ defined as
$D_T = (I - T^* T)^{\frac{1}{2}}$ and $\cD_T = \overline{\operatorname{Ran}}\; D_T$.

\smallskip

\item Given the set of $d$ indices $\{ j \colon 1 \le j \le d\}$, $(j)$ denotes the tuple of $d-1$ indices $(1,\dots,j-1,j+1,\dots,d)$.

\smallskip

\item For a $d$-tuple $(T_1,T_2,\dots,T_d)$ of operators and  an index $j$ such that  $1\leq j \leq d$, $T_{(j)}$ denotes the operator $T_1\cdots T_{j-1}T_{j+1}\cdots T_d$.

\end{itemize}

\subsection{Domains with motivation from control: the symmetrized bidisk $\mathbb{G}$ and the tetrablock ${\mathbb E}$}
\label{S:domains}

The symmetrized bidisk $\mathbb G$ is the domain in ${\mathbb C}^2$ defined as
\begin{equation}   \label{Gamma-def1}
\mathbb G = \{ (s, p ) \in {\mathbb C}^2 \colon \exists\, (\lambda_1, \lambda_2) \in {\mathbb D}^2 \text{ such that }
s = \lambda_1 + \lambda_2 \text{ and } p = \lambda_1 \lambda_2 \}.
\end{equation}
The study of this domain from a function-theoretic and operator-theoretic point of view was initiated in a series of papers by
Agler and Young starting in the late 1990s (see \cite{AglerYoung99, AglerYoungIEOT, AglerYoungTAMS, AglerYoung00, AglerYoung03, AglerYoung04})  with original motivation from Robust Control Theory
(see \cite{DP} and the papers of Bercovici, Foias and Tannenbaum \cite{bft90, bft91, bft95, bft96}). The  control motivation can be explained as follows.

A key role is played by the notion of structured singular value introduced in the control literature by Packer and
Doyle \cite{PD}.  The {\em structured singular value} $\mu_{\bDelta}(A)$ of a $N \times N$ matrix over ${\mathbb C}$
with respect to an {\em uncertainty set} $\bDelta$ (to be thought of as the admissible range for an additional
unknown variable $\Delta$  which is used to parametrize  the set of possible true plants around the chosen nominal
(oversimplified) model plant) is defined to be
$$
 \mu_\bDelta(A) = [ \sup \{ r \in {\mathbb R}_+ \colon I - \Delta A \text{ invertible  for } \Delta \in \bDelta
 \text{ with } \| \Delta | \le r \}]^{-1}
$$
After appropriate normalizations, it suffices to test whether $\mu_\bDelta(A) < 1$;
$$
\mu_\bDelta(A) < 1 \Leftrightarrow I - A \Delta \text{ invertible for all } \Delta \in \bDelta
\text{ with } \| \Delta \| \le 1.
$$
In the control theory context,  this appears as the test for internal stability not only for the nominal plant but for
all other possible true plants as modeled by the uncertainty set $\bDelta$.  In practice the uncertainty set
is taken to be the set of all matrices having a prescribed block diagonal structure.

For the case of $2 \times 2$ matrices, there are three possible block-diagonal structures:
\begin{align*}
& \bDelta_{\rm full} = \left \{ \begin{bmatrix} z_{11}  & z_{12}  \\ z_{21} & z_{22} \end{bmatrix}
 \colon z_{ij} \in {\mathbb C} \right\} =  \text{ all } 2 \times 2 \text{ matrices.}  \\
& \bDelta_{\rm scalar} =\left\{ \begin{bmatrix} z & 0 \\ 0 & z  \end{bmatrix} \colon
z \in {\mathbb C} \right\} =
\text{ all scalar } 2 \times 2 \text{ matrices.}  \\
& \bDelta_{\rm diag} =  \left\{ \begin{bmatrix} z_1 & 0 \\ 0 & z_2 \end{bmatrix} \colon z_1, z_2 \in {\mathbb C}
\right \}  = \text{ all diagonal matrices}.
\end{align*}

An easy exercise using the theory of singular-value decompositions is to show that
$$
  \mu_{\bDelta_{\rm full}}(A) = \| A  \|.
$$

To compute $\mu_{\bDelta_{\rm scalar}}(A)$, one can proceed as follows.
Given $A = \sbm{ a_{11} & a_{12} \\ a_{21} & a_{22} }$, from the definitions we see that
\begin{align}
\mu_{\bDelta_{\rm scalar}}(A) < 1 & \Leftrightarrow \det\left( \begin{bmatrix} 1- z a_{11} & - z a_{12} \\
-z a_{21} &  1 - z a_{22} \end{bmatrix} \right) \ne 0 \text{ for all } z \text{ with } |z| \le 1   \notag \\
& \Leftrightarrow  1 - (\operatorname{tr} A) z  + (\det A) z^2 \ne 0 \text{ for all }  z \text{ with } |z| \le 1.
\label{Gamma-criterion}
\end{align}
Thus the decision as to whether $\mu_{\bDelta_{\rm scalar}}(A) < 1$ depends only on $\operatorname{tr} A$ and
$\det A$, i.e., on $\operatorname{tr} A = \lambda_1 + \lambda_2$ and $\det A = \lambda_1 \lambda_2$ where $\lambda_1$,
$\lambda_2$ are the eigenvalues of $A$. This suggests that we define a map
$\pi_\mathbb G \colon {\mathbb C}^{2 \times 2} \to {\mathbb C}^2$ by
$$
   \pi_\mathbb G(A) = (\operatorname{tr} A, \det A)
$$
and introduce the domain
\begin{align}
\mathbb G' = &  \{ x = (x_1, x_2) \in {\mathbb C}^2 \colon \exists\, A = \sbm{ a_{11} & a_{12} \\ a_{21} & a_{22} }
\in {\mathbb C}^{2 \times 2}  \notag  \\
& \text{ with } \pi_\mathbb G(A) = x \text{ and } \mu_{\bDelta_{\rm scalar}}(A) < 1\}.
 \label{Gamma-def2}
\end{align}
Note next that the first form of the criterion \eqref{Gamma-criterion} for $\mu_{\bDelta_{\rm scalar}}(A) < 1$ can also be
interpreted as saying that  $A$ has no inverse-eigenvalues inside the closed unit disk, i.e., all eigenvalues of $A$
are in the open unit disk, meaning that $\rho_{\rm spec}(A) < 1$.  In this way we see that the symmetrized bidisk
$\mathbb G$ \eqref{Gamma-def1} is exactly the same as
the domain $\mathbb G'$ given by \eqref{Gamma-def2}.  This equivalence gives the connection between the symmetrized
bidisk and the structured singular value $A \mapsto \mu_{\bDelta_{\rm scalar}}(A)$.

Noting that  similarity transformations
$$
  A \mapsto A' = S A S^{-1} \text{ for some invertible } S
$$
preserve eigenvalues and using the fact that $\rho_{\rm spec}(A) < 1$ if and only if $A$ is similar to a strict
contraction (known as Rota's Theorem \cite{Rota} among mathematicians whereas engineers think in terms of
$X = S^*S \succ 0$ being a solution of the Linear Matrix Inequality $A^* X A - X \prec 0$---see e.g.\ \cite[Theorem 11.1 (i)]{DP}),
we see that yet another characterization of the domain $\mathbb G$ is
\begin{equation}   \label{Gamma-def3}
\mathbb G = \{ x = (s, p ) \in {\mathbb C}^2 \colon  \exists\,  A \in {\mathbb C}^2 \text{ with } \pi_\mathbb G(A) = x \text{ and }
\| A \| < 1\}.
\end{equation}
The fact that one can always write down a companion matrix $A$ whose characteristic polynomial $\det( z I - A)$
is equal to a given polynomial $1 - s z + p s^2$ leads us to one more equivalent definition of $\mathbb G$:
\begin{equation} \label{Gamma-def4}
 \mathbb G = \{ (s, p) \in {\mathbb C}^2 \colon 1 - s z + p z^2 \ne 0 \text{ for } |z| \le 1 \}.
 \end{equation}The closure of $\mathbb{G}$ is denoted by $\Gamma$.

A similar story holds for the tetrablock domain ${\mathbb E}$ defined as
\begin{equation}   \label{E-def1}
{\mathbb E} = \{ x  = (x_1, x_2, x_3) \in {\mathbb C}^3 \colon 1 - x_1 z - x_2 w + x_3 zw \ne 0
\text{ whenever } |z| \le 1, |w| \le 1 \}
\end{equation}
(the analogue of definition \eqref{Gamma-def4} for the symmetrized bidisk $\mathbb G$)
and its connection with the structured singular value $A \mapsto \mu_{\bDelta_{\rm diag}}(A)$.
From  the definitions we see that, for $A = \sbm{ a_{11}
& a_{12} \\ a_{21} & a_{22}}$,
\begin{align*}
\mu_{\bDelta_{\rm diag}}(A) < 1 & \Leftrightarrow
 \det \left( \begin{bmatrix} 1 & 0 \\ 0 & 1 \end{bmatrix} - \begin{bmatrix} z & 0 \\ 0 & w\end{bmatrix}
  \begin{bmatrix} a_{11} & a_{12} \\ a_{21} & a_{22} \end{bmatrix} \right) \ne 0 \text{ for } |z| \le 1,\, |w| \le 1 \\
  & \Leftrightarrow
 1 - z a_{11} - w a_{22} + zw \cdot \det A  \ne 0 \text{ whenever } |z| \le 1,\, |w| \le 1.
\end{align*}
This suggests that we define a mapping $\pi_{\mathbb E}\colon {\mathbb C}^{2 \times 2} \to {\mathbb C}^3$ by
$$
  \pi_{\mathbb E} \left( \sbm{ a_{11} & a_{12} \\ a_{21} & a_{22} } \right) = (a_{11}, a_{22},  a_{11} a_{22} - a_{12} a_{21} )
$$
and we define a domain ${\mathbb E}$ by
\begin{equation}  \label{E-def2}
{\mathbb E} = \{ x = (x_1, x_2, x_3) \in {\mathbb C}^3 \colon \exists\, A \in {\mathbb C}^{2 \times 2} \text{ with }
\pi_{\mathbb E}(A) = x \text{ and } \mu_{\bDelta_{\rm diag}}(A) < 1\}.
\end{equation}
If $x = (x_1, x_2, x_3)$ belongs to  ${\mathbb E}$ as defined in \eqref{E-def1} above, we can always take
$A = \sbm{ x_1 & x_1 x_2 - x_3 \\ 1 & x_2}$ to produce a matrix $A$ with $\pi_{\mathbb E}(A) = (x_1, x_2, x_3)$ and then
this $A$ has the property that $\mu_{\bDelta_{\rm diag}}(A) < 1$.  Thus definitions \eqref{E-def1} and \eqref{E-def2}
are equivalent.  Among the many equivalent definitions of ${\mathbb E}$ (see \cite[Theorem 2.2]{awy07}), one of the
more remarkable ones is the following variation of definition \eqref{E-def2}:
\begin{equation}  \label{E-def3}
{\mathbb E} = \{ x = (x_1, x_2, x_3) \in {\mathbb C}^3 \colon \exists\, A \in {\mathbb C}^3 \text{ with } \pi_{\mathbb E}(A) = x
\text{ and } \| A \| < 1\}.
\end{equation}
That \eqref{E-def2} and \eqref{E-def3} are equivalent can be seen as a consequence of the $2s+f$ theorem in the
control literature (with $s=0$, $f=2$ so that
$2s+f = 2 \le 3$)---see \cite[Theorem 8.27]{DP}, but is also proved in \cite{awy07} directly.

While the original motivation was the control theory connections, most of the ensuing research concerning the domains
${\mathbb G}$ and ${\mathbb E}$ focused on their role as new concrete domains to explore operator- and function-theoretic
questions concerning general domains in ${\mathbb C}^d$. One such question is  the {\em rational dilation problem}
formulated by Arveson \cite{Arveson}.  Let us assume that $K$ is a compact set in $\mathbb{C}^d$ (as is the case for
$K$ equal to $\Gamma=\overline{\mathbb{G}}$ or $\overline{\mathbb E}$).  Suppose that we are given a commutative tuple $\underline{T}
= (T_1, \dots, T_d)$ of Hilbert space operators with Taylor joint spectrum contained in $K$ (if the Hilbert space $\cH$ is
finite-dimensional,  one can take Taylor joint spectrum to mean the set of joint eigenvalues).  If $r$ is any function
holomorphic in a neighborhood of $K$ (taking $r$ to be rational suffices: if $K$ is polynomially convex, one can even take $r$ to be
polynomial) any reasonable functional calculus can be used to define $r(\underline{T})$.  We say that {\em $\underline{T}$
is a $K$-contraction} (sometimes also phrased as {\em $K$ is a spectral set for $\underline{T}$}), if for all $r \in \operatorname{Rat}(K)$
(rational functions holomorphic in a neighborhood of $K$) it is the case that the following {\em von Neumann inequality} holds:
$$
   \|r(\underline{T}) \|_{\cB(\cH)} \le \| r \|_{\infty, K} = \sup_{z \in K} \{|r(z)|\}
$$
where $\cB(\cH)$ is the Banach algebra of  bounded linear operators on $\cH$ with the operator norm. Let us say that that
operator tuple $\underline{U} = (U_1, \dots, U_d)$ is {\em $K$-unitary} if $\underline{U}$ is a commutative tuple of
normal operators with joint spectrum contained in the distinguished boundary $\partial_e K$ of $K$.  We say that
$\underline{T}$ has a {\em $K$-unitary dilation}  if there is a $K$-unitary operator-tuple $\underline{U}$ on a
larger Hilbert space $\cK$ containing $\cH$ such that $r(\underline{T}) = P_\cH r(\underline{U})|_\cH$ for all $r \in \operatorname{Rat} K$. If
$\underline{T}$ has a $K$-unitary dilation $\underline{U}$, it follows that
\begin{align*}
\| r(\underline{T}) \| & = \| P_\cH r(\underline{U})|_\cH \| \le \| r(\underline{U}) \|
 = \sup_{z \in \partial_e K} |r(z) |  \\
 & \text{(by the functional calculus for commutative  normal operators)} \\
& = \sup_{z \in K} |r(z)| \text{ (by the definition of the distinguished boundary)}
\end{align*}
and it follows that $\underline{T}$ has $K$ as a spectral set.
The {\em rational dilation question} asks: for a given
compact set $K$, when is it the case that the converse direction holds, i.e., that  $\underline{T}$ being a $K$-contraction
implies that $\underline{T}$ has a $K$-unitary dilation $\underline{U}$?
For the case of $K$ equal to the closed polydisk ${\overline{\mathbb D}}^d$, the rational dilation question is known to
have an affirmative answer in case $d=1$ (by the Sz.-Nagy dilation theorem \cite{sz-nagy}) as well as $d=2$
(by the And\^o dilation theorem \cite{ando}) but has a negative answer for $d\ge 3$ by the result of Parrott \cite{Parrott}.
For the case of $K = \Gamma$ it is known that the rational dilation question has an affirmative answer (\cite{AglerYoung03,
B-P-SR})  while the case of  $K = {\overline{\mathbb E}}$ was initially thought to be settled in the negative \cite{Pal}
but now appears to be still undecided \cite{BS-failure}.

It is known that existence of a $K$-unitary dilation
for $\underline{T}$ is equivalent to the existence of a {\em $K$-isometric lift} for $\underline{T}$.  Here a commutative
operator-tuple $\underline{V} = (V_1, \dots, V_d)$ defined on a Hilbert space $\cK_+$ is said to be a
{\em $K$-isometry} if there is a $K$-unitary $d$-tuple $\underline{U} = (U_1, \dots, U_d)$ on a Hilbert space $\cK$
containing $\cK_+$ such that $\cK_+$ is invariant for $\underline{U}$  and $\underline{U}$ restricted to $\cK_+$ is
equal to $\underline{V}$, i.e.,
$$
U_j \cK_+ \subset \cK_+ \text{ and }
  U_j|_{\cK_+} = V_j \text{ for } j = 1, \dots, d.
$$
Then we say that the $K$-contraction $\underline{T}$ has a $K$-isometric lift if there is a
$K$-isometric operator-tuple $\underline{V}$ on a Hilbert space $\cK_+$ containing $\cH$ such that
$\underline{V}$ is a lift of $\underline{T}$, i.e., for each $j = 1, \dots, d$,
$$
V_j^* \cH \subset \cH \text{ and } V_j^*|_\cH = T_j^*.
$$
It is known that a $K$-contraction $\underline{T}$ has a $K$-unitary dilation if and only if
$\underline{T}$ has a $K$-isometric lift.  In practice $K$-isometric lifts are easier to work with,
so in the sequel we shall only deal with $K$-isometric lifts.
This point has been made in a number of places (see e.g.\ the introduction in \cite{BS-failure}).

We define a couple of terminologies here. To add flexibility to the construction of such lifts, we often drop the requirement that $\cH$ be a subspace of
$\cK_+$ but instead require only an isometric identification map $\Pi \colon \cH \to \cK_+$.  We summarize the precise language which we shall
be using in the following definition.

\begin{definition}  \label{D:terminology}  We say that $(\Pi, \cK_+,  \underline{S} = (S_1, \dots, S_d))$
is a {\em lift} of $\underline{T} = (T_1, \dots, T_d)$ on $\cH$ if
\begin{itemize}
\item $\Pi \colon \cH \to \cK_+$ is isometric, and
\item $S_j^* \Pi h = \Pi T_j^* h$ for all $h \in \cH$ and $j=0,1,2,\dots, d$.
\end{itemize}

A lift $(\Pi, \cK_+, \underline{S})$  of $\underline{T}$ is said to be {\em minimal} if
$$
 \cK_+ = \overline{\operatorname{span}} \{S_1^{m_1} S_2^{m_2} \cdots S_d^{m_d} h \colon h \in \cH, m_j \ge 0\}.
$$

Two lifts $(\Pi, \cK_+, \underline{S})$ and $(\Pi,' \cK'_+, \underline{S}' )$  of the same $(T_1, \dots, T_d)$ are said to be {\em unitarily equivalent} if there
is a unitary operator $\tau \colon \cK_+ \to \cK'_+$ so that
$$
\tau S_j = S_j' \tau \text{ for each } j =1, \dots, d, \quad \text{and}\quad\tau \Pi = \Pi'.
$$
\end{definition}

It is known (see Chapter I of \cite{Nagy-Foias}) that when $K=\overline{\mathbb{D}}$, any two minimal isometric lifts of a given contraction are unitarily equivalent. However, minimality in several variables does not imply uniqueness, in general. For example, two minimal $\overline{\mathbb{D}}^2$-isometric lifts need not be unique (see \cite{Timotin-Li}).

Instead of $\overline{\mathbb{E}}$-contraction, the terminology {\em tetrablock contraction} was used in \cite{Tirtha14}.
We follow this terminology.

\subsection{Fundamental operators}
For our study of commutative contractive tuples $\underline{T} = (T_1, \dots, T_d)$,
we shall have use for the following theorem concerning $\Gamma$-contractions. We refer back to Section \ref{S:notation} for other notational conventions.

\begin{thm}\label{Thm:B-P-SR}
Let $(S,T)$ be a $\Gamma$-contraction on a Hilbert space $\cH$. Then
\begin{enumerate}
  \item[(1)] {\rm (See \cite[Theorem 4.2]{B-P-SR}.) }There exists a unique operator $F\in\cB(\cD_T)$ with $\nu(F)\leq 1$ such that
  $$
  S-S^*T=D_TFD_T.
  $$
  \item[(2)] {\rm (See \cite[Lemma 4.1]{Tirtha14}.)} The operator $F$ in part (1) above is  the unique solution $X = F$
  of the operator equation
  $$
  D_TS=X D_T+  X^*D_T T.
  $$
\end{enumerate}
\end{thm}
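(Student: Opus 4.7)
My plan is to obtain part (1) by extracting an operator pencil inequality from the Agler--Young characterization of $\Gamma$-contractions and then invoking a two-sided Douglas-type factorization, and then to deduce part (2) from (1) via a short algebraic verification together with a telescoping argument for uniqueness. For part (1), I start from the Agler--Young test-function description: $(S,T)$ is a $\Gamma$-contraction iff for every $\beta \in \overline{\mathbb D}$ the operator $(2I - \beta S)^{-1}(2\beta T - S)$ is a contraction, equivalently
$$
(2I - \beta S)^*(2I - \beta S) \geq (2\beta T - S)^*(2\beta T - S).
$$
Expanding both sides and specializing to $|\beta|=1$ cancels the $S^*S$ terms and reduces the inequality to the pencil
$$
D_T^2 \;\geq\; \operatorname{Re}\bigl(\bar\beta\,(S - S^*T)\bigr),\qquad |\beta|=1.
$$
A numerical-radius variant of Smul'jan's two-sided factorization lemma (the method of \cite{B-P-SR}) then produces a unique $F \in \cB(\cD_T)$ satisfying $D_T F D_T = S - S^*T$ together with $\operatorname{Re}(\bar\beta F) \leq I_{\cD_T}$ for all $|\beta|=1$, i.e., $\nu(F) \leq 1$. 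Uniqueness comes for free from density of $\operatorname{Ran}(D_T)$ in $\cD_T$: if $D_T(F_1 - F_2) D_T = 0$, pairing yields $\langle (F_1 - F_2) D_T h, D_T k\rangle = 0$ for all $h,k$, which forces $F_1 = F_2$ on $\cD_T$.

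For the verification step in part (2), I compute directly, using $D_T F D_T = S - S^*T$, its adjoint $D_T F^* D_T = S^* - T^*S$, and the $\Gamma$-contraction commutativity $ST = TS$:
\begin{align*}
D_T\bigl(F D_T + F^* D_T T\bigr)
&= (S - S^*T) + (S^* - T^*S)\,T \\
&= S - T^*ST \;=\; (I - T^*T)\,S \;=\; D_T \cdot D_T S.
\end{align*}
Each of $F D_T$, $F^* D_T T$, and $D_T S$ has range inside $\cD_T$, where $D_T$ is injective, so the leftmost $D_T$ can be cancelled and $D_T S = F D_T + F^* D_T T$ follows.

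The uniqueness assertion in (2) is the most delicate point. Given a second solution $X \in \cB(\cD_T)$, set $Y = X - F$; then $Y D_T + Y^* D_T T = 0$. My strategy is to introduce the sesquilinear form $\phi(h,k) := \langle Y D_T h, D_T k\rangle$ on $\cH \times \cH$ and exploit the defining relation:
$$
\phi(h, k) \;=\; -\langle Y^* D_T T h, D_T k\rangle \;=\; -\langle D_T T h, Y D_T k\rangle \;=\; -\overline{\phi(k, Th)}.
$$
Applying this identity a second time gives the invariance $\phi(h,k) = \phi(Th, Tk)$, hence $\phi(h,k) = \phi(T^n h, T^n k)$ for every $n \geq 0$. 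The telescoping identity $\sum_{j=0}^{n-1} \|D_T T^j h\|^2 = \|h\|^2 - \|T^n h\|^2 \leq \|h\|^2$ forces $\|D_T T^n h\| \to 0$, and Cauchy--Schwarz yields $|\phi(T^n h, T^n k)| \leq \|Y\|\,\|D_T T^n h\|\,\|D_T T^n k\| \to 0$. Hence $\phi \equiv 0$, so $Y D_T h \perp \operatorname{Ran}(D_T)$ and therefore $Y D_T h \perp \cD_T$; but $Y D_T h \in \cD_T$, so $Y D_T = 0$, and by density we conclude $Y = 0$, i.e., $X = F$. I anticipate this sesquilinear-invariance step as the main obstacle: it is the combination of the invariance $\phi(h,k) = \phi(T^n h, T^n k)$ with the summability $\sum_n \|D_T T^n h\|^2 < \infty$, rather than a naive direct iteration of the operator equation, that is essential.
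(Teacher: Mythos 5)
This theorem is one of the quoted preliminaries: the paper gives no proof of it, referring instead to \cite[Theorem 4.2]{B-P-SR} for part (1) and \cite[Lemma 4.1]{Tirtha14} for part (2), so there is no in-paper argument to compare against. Your reconstruction follows the route of those sources, and most of it is sound, but there is one place where the substance of the result is assumed rather than proved.

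In part (1), the reduction from the test functions $(2\beta T - S)(2I-\beta S)^{-1}$ to the pencil inequality $D_T^2 \ge \operatorname{Re}\bigl(\bar\beta(S-S^*T)\bigr)$ is correct in outline, though you should derive the quadratic inequality for $|\beta|<1$ (where $2I-\beta S$ is invertible, since $\|S\|\le 2$ can make it singular on the boundary, e.g.\ $S=2I$, $\beta=1$) and pass to $|\beta|=1$ by continuity, at which point the $S^*S$ terms cancel as you say. The genuine gap is the next step: the ``numerical-radius variant of Smul'jan's two-sided factorization lemma'' is not an off-the-shelf fact but is exactly the content of \cite[Theorem 4.2]{B-P-SR}. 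Concretely, the pencil inequality gives only the diagonal bound $|\langle (S-S^*T)h,h\rangle|\le \|D_Th\|^2$; to produce $F\in\cB(\cD_T)$ with $D_TFD_T=S-S^*T$ one must upgrade this to a bilinear bound $|\langle (S-S^*T)h,k\rangle|\le C\,\|D_Th\|\,\|D_Tk\|$ (a polarization-plus-rescaling argument yields $C=4$), check well-definedness of the induced form on $\operatorname{Ran}D_T\times\operatorname{Ran}D_T$, and only then read off $\nu(F)\le 1$ from the diagonal bound. Since this factorization is the only nontrivial assertion in part (1), citing it as a black box leaves the heart of (1) unproved; the uniqueness via density of $\operatorname{Ran}D_T$ in $\cD_T$ is fine.

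Part (2) is complete and correct. The verification $D_T(FD_T+F^*D_TT)=D_T^2S$ followed by cancellation of $D_T$, which is injective on $\cD_T$ and applied to vectors that all lie in $\cD_T$, is exactly right. Your uniqueness argument --- the invariance $\phi(h,k)=\phi(Th,Tk)$ of the form $\phi(h,k)=\langle YD_Th,D_Tk\rangle$ obtained by applying the relation $\phi(h,k)=-\overline{\phi(k,Th)}$ twice, combined with $\|D_TT^nh\|\to 0$ from the telescoping sum --- is the same device used in \cite{Tirtha14}, and your remark that a naive iteration of the operator identity does not close (since $YD_T=-Y^*D_TT$ gives no handle on $Y^*D_T$ itself) is accurate.
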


This theorem has been a major influence on further developments in the theory of both $\Gamma$-contractions \cite{BPJOT, BS-PRIMS} and
tetrablock contractions \cite{Tirtha14, BS-CAOT, SauNYJM}. The unique operator $F$ in Theorem \ref{Thm:B-P-SR} is called the
{\em fundamental operator} of the $\Gamma$-contraction $(S,T)$.

\subsection{Models for commutative isometric tuples}   \label{S:BCL}
The following result  of Berger, Coburn and Lebow  for commutative-tuples of isometries is a fundamental stepping stone
for our study of commutative-tuples of contractions.

\begin{thm}  \label{Thm:BCL}
  Let $d\geq2$ and $(V_1,V_2,\dots,V_d)$ be a $d$-tuple of commutative  isometries acting on $\cK$. Then there exit Hilbert spaces
  $\cF$ and $\cK_u$, unitary operators $U_1, \dots, U_d$ and projection operators $P_1, \dots, P_d$ on $\cF$,
  commutative  unitary operators $W_1, \dots, W_d$ on $
  \cK_u$, such that $\cK$ can be decomposed as
   \begin{align}\label{VWold}
   \cK=H^2(\cF)\oplus\cK_u
   \end{align}
and with respect to this decomposition
\begin{align}  \label{BCL1}
&V_j = M_{U_jP_j^\perp+zU_jP_j}\oplus W_j,\;
V_{(j)}=M_{P_jU_j^*+zP_j^\perp U_j^*}\oplus W_{(j)}  \text{ for }1 \le j \le d,\\
&\text{ and }  V =V_1V_2\cdots V_d= M_z\oplus W_1W_2\cdots W_d.\notag
\end{align}
\end{thm}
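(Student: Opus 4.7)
The plan is to reduce the multi-variable statement to the classical Wold decomposition of the product isometry $V = V_1 V_2 \cdots V_d$ and then exploit the commutant structure of the Hardy-space shift. First, I apply the Wold decomposition to $V$, writing $\cK = \cK_s \oplus \cK_u$ where $\cK_u = \bigcap_{n \ge 0} V^n \cK$ and $\cK_s = \bigoplus_{n \ge 0} V^n \cF$ with $\cF = \cK \ominus V\cK$. Since each $V_j$ commutes with $V$, I have $V_j \cK_u \subset \cK_u$ directly from the description $\cK_u = \bigcap_n V^n \cK$; once unitarity of $V_j|_{\cK_u}$ is established below, $V_j^* \cK_u \subset \cK_u$ will give $V_j \cK_s \subset \cK_s$ as well. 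Identifying $\cK_s$ with $H^2(\cF)$ in the standard way makes $V|_{\cK_s}$ unitarily equivalent to $M_z$ and $V|_{\cK_u}$ unitary.

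Next, I treat the two summands. On $\cK_u$, the product $V|_{\cK_u} = V_1|_{\cK_u} \cdots V_d|_{\cK_u}$ is unitary, hence surjective; since $V_j \cK_u \supset V_j V_{(j)} \cK_u = V \cK_u = \cK_u$, each $V_j|_{\cK_u}$ is surjective and therefore unitary. Setting $W_j := V_j|_{\cK_u}$ yields the required commuting unitary tuple with $W_1 \cdots W_d = V|_{\cK_u}$. On $\cK_s \cong H^2(\cF)$ each $V_j$ lies in the commutant of $M_z$, so the standard description of the commutant of the shift gives $V_j = M_{\Phi_j}$ for some $\Phi_j \in H^\infty(\cB(\cF))$, and $\Phi_j$ is inner because $V_j$ is an isometry. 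The identity $V_j V_{(j)} = M_z$ translates into $\Phi_j(z) \Phi_{(j)}(z) = z I_\cF$ pointwise.

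The main step---and the place I expect the principal technical effort---is to extract the BCL form $\Phi_j(z) = U_j(P_j^\perp + z P_j)$ from the fact that $\Phi_j$ and $\Phi_{(j)}$ are both inner with product $z I_\cF$. Since $\Phi_j(e^{i\theta})$ is unitary almost everywhere on $\bT$, the factorization forces the boundary identity $\Phi_{(j)}(e^{i\theta}) = e^{i\theta} \Phi_j(e^{i\theta})^*$. Writing $\Phi_j(z) = \sum_{n \ge 0} A_n z^n$, the right-hand side equals $A_1^* + e^{i\theta} A_0^* + \sum_{n \ge 2} e^{-i(n-1)\theta} A_n^*$; the requirement that $\Phi_{(j)} \in H^\infty(\cB(\cF))$ have no strictly negative Fourier modes forces $A_n = 0$ for $n \ge 2$, so $\Phi_j$ is a polynomial of degree at most one. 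The inner identities $\Phi_j^* \Phi_j = \Phi_j \Phi_j^* = I$ on $\bT$ then yield $A_0^* A_0 + A_1^* A_1 = I$, $A_0 A_0^* + A_1 A_1^* = I$, and $A_0^* A_1 = A_0 A_1^* = 0$. From these one verifies that $P_j^\perp := A_0^* A_0$ and $P_j := A_1^* A_1$ are complementary projections on $\cF$ and that $U_j := A_0 + A_1$ is unitary, whereupon a short calculation yields $A_0 = U_j P_j^\perp$ and $A_1 = U_j P_j$. This gives $\Phi_j(z) = U_j(P_j^\perp + z P_j)$, and the companion formula $\Phi_{(j)}(z) = P_j U_j^* + z P_j^\perp U_j^*$ is read off from the circle identity $\Phi_{(j)}(e^{i\theta}) = e^{i\theta} \Phi_j(e^{i\theta})^*$. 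The hard part is controlling the degree of the polynomial symbol $\Phi_j$, which the boundary identity combined with the analyticity of $\Phi_{(j)}$ resolves at once.
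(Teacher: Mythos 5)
Your argument is correct. Note first that the paper itself offers no proof of Theorem \ref{Thm:BCL}: it simply cites Theorem 3.1 of Berger--Coburn--Lebow \cite{BCL} (and \cite{BDF1} for another perspective), so there is no in-paper argument to compare yours against; what you have written is a correct, self-contained derivation in the spirit of the original. The overall strategy is the standard one: Wold-decompose the product $V=V_1\cdots V_d$, check that both summands reduce every $V_j$ (your chain --- $V_j\cK_u\subseteq\cK_u$ from commutativity, surjectivity of $V_j|_{\cK_u}$ from $V_j\cK_u\supseteq V_jV_{(j)}\cK_u=V\cK_u=\cK_u$, and then $V_j^*\cK_u\subseteq\cK_u$ giving $V_j\cK_s\subseteq\cK_s$ --- is exactly right), realize $V_j|_{\cK_s}$ as $M_{\Phi_j}$ via the commutant of the shift, and force $\Phi_j$ to be an affine pencil. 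Your mechanism for that last step, namely the boundary identity $\Phi_{(j)}(e^{i\theta})=e^{i\theta}\Phi_j(e^{i\theta})^*$ combined with analyticity of $\Phi_{(j)}$ killing every coefficient $A_n$ with $n\ge 2$, is a clean alternative to the range argument one sometimes sees ($\operatorname{Ran}M_{\Phi_j}\supseteq zH^2(\cF)$ forces a linear symbol), and the subsequent algebra with $A_0^*A_0+A_1^*A_1=I$, $A_0A_0^*+A_1A_1^*=I$, $A_0^*A_1=A_0A_1^*=0$ does produce complementary projections $P_j^\perp=A_0^*A_0$, $P_j=A_1^*A_1$ and a unitary $U_j=A_0+A_1$ with $A_0=U_jP_j^\perp$ and $A_1=U_jP_j$. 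The only step you assert without justification is that $\Phi_j(e^{i\theta})$ is unitary a.e.: you should add that it is isometric a.e.\ because $M_{\Phi_j}$ is an isometry, and surjective a.e.\ because $\Phi_j(e^{i\theta})\Phi_{(j)}(e^{i\theta})=e^{i\theta}I_{\cF}$ is invertible. With that one sentence supplied, the proof is complete.
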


\begin{proof}  See Theorem 3.1 in \cite{BCL} as well as \cite[Section 2]{BDF1} for a different perspective.
\end{proof}

\begin{definition}  \label{D:BCLtuple}
Given two Hilbert spaces $\cF$, $\cE$, $d$ projections $P_1,P_2,\dots,P_d$ in $\cB(\cF)$, $d$ unitaries $U_1,U_2,\dots,U_d$ in $\cB(\cF)$, and
$d$ commuting unitaries $W_1,W_2,\dots,W_d$ in $\cB(\cE)$, the tuple
$$
(\cF,\cE,P_j,U_j,W_j)_{j=1}^d
$$
will be referred to as a {\em BCL tuple}. We shall call the tuple of isometries acting on $H^2(\cF)\oplus\cE$ given by
\begin{align}\label{noncomBCLmodel}
(M_{U_1P_1^\perp+zU_1P_1}\oplus W_1,M_{U_2P_2^\perp+zU_2P_2}\oplus W_2,\dots,M_{U_dP_d^\perp+zU_dP_d}\oplus W_d)
\end{align}
the {\em BCL model} associated with the BCL tuple $(\cF,\cE,P_j,U_j,W_j)_{j=1}^d$.
\end{definition}

\begin{remark}
If $P$ and $U$ are a projection and a unitary acting on a Hilbert space $\cF$, then one can check that the multiplication operator
$M_{U(P^\perp+zP)}$ acting on $H^2(\cF)$ is an isometry. It should however be noted that given $d$ projections $P_1,P_2,\dots,P_d$ and
unitaries $U_1,U_2,\dots,U_d$ on $\cF$, the tuple of isometries
$$
(M_{U_1P_1^\perp+zU_1P_1},M_{U_2P_2^\perp+zU_2P_2},\dots,M_{U_dP_d^\perp+zU_dP_d})
$$
need not be commutative, in general. Necessary conditions for such a tuple of isometries to be commuting are
given in Theorem 3.2 of \cite{BCL}:
\begin{align}
& U_1U_2\cdots U_d= I_{\cF}, \quad U_iU_j=U_jU_i \text{ for } 1\leq i,j \leq d,  \notag \\
& P_{j_1}+U_{j_1}^*P_{j_2}U_{j_1}+U_{j_2}^*U_{j_1}^*P_{j_3}U_{j_1}U_{j_2}+\cdots+U_{j_{d-1}}^*\cdots U_{j_1}^*P_{j_n}U_{j_1}\cdots U_{j_{d-1}}=I_{\cF} \notag \\
& \quad \text{ for }(j_1,j_2,\dots,j_d)   \in S_d     \label{NecConds}
\end{align}
where  $S_d$ is the permutation group on $d$ indices $\{1,2,\dots,d\}$.
 When $d=2$, these  necessary conditions  \eqref{NecConds}  simplify to
 \begin{align}\label{2NecCondsBCL}
 U_2=U_1^* \text{ and }P_2=I_{\cF}-P_1.
\end{align}
which turn out to be  sufficient as well for the $d=2$ case.
\end{remark}

It is well known that an arbitrary family of commutative isometries has a commutative unitary extension
(see \cite[Proposition I.6.2]{Nagy-Foias}).
The Berger-Coburn-Lebow model for commutative isometries gives  some additional information
regarding such extensions.

\begin{lemma}\label{special-ext}
Let $\underline{V}=(V_1,V_2,\dots, V_d)$ be a $d$-tuple of commutative  isometries on a Hilbert space $\mathcal{H}$ and $V=V_1V_2\cdots V_d$. Then $\underline{V}$ has a commutative  unitary extension $\underline{Y}=(Y_1,Y_2,\dots, Y_d)$ such that $Y=Y_1Y_2\cdots Y_d$ is the minimal unitary extension of $V$.
\end{lemma}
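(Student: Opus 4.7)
The plan is to invoke the Berger--Coburn--Lebow decomposition (Theorem~\ref{Thm:BCL}) directly to produce an explicit model for the desired commutative unitary extension. First, I would apply Theorem~\ref{Thm:BCL} to $\underline{V}$ to obtain a decomposition $\cH = H^2(\cF) \oplus \cK_u$ together with a BCL tuple $(\cF,\cK_u,P_j,U_j,W_j)_{j=1}^d$ realizing $V_j = M_{\varphi_j}\oplus W_j$ with symbol $\varphi_j(z) = U_jP_j^\perp + zU_jP_j$, and in particular $V = M_z \oplus(W_1\cdots W_d)$. On the enlarged space $\cK := L^2(\cF)\oplus \cK_u$ I would then define
$$
Y_j := M_{\varphi_j}\oplus W_j,\qquad j=1,\dots,d,
$$
where $M_{\varphi_j}$ now denotes multiplication on $L^2(\cF)$ by the same operator-valued symbol.

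Four things then need to be checked, and all are essentially routine given the BCL data. First, each $Y_j$ is unitary: the boundary value $\varphi_j(\zeta) = U_j(P_j^\perp+\zeta P_j)$ is unitary on $\cF$ for every $|\zeta|=1$ (since $P_j^\perp +\zeta P_j$ is unitary and $U_j$ is), so $M_{\varphi_j}$ is a unitary multiplication operator on $L^2(\cF)$, and $W_j$ is unitary by hypothesis. Second, $Y_j$ extends $V_j$ via the natural embedding $H^2(\cF)\hookrightarrow L^2(\cF)$, since the symbols coincide. Third, the $Y_j$'s commute pairwise: since $M_{\varphi_i}M_{\varphi_j}=M_{\varphi_i\varphi_j}$ and $V_iV_j=V_jV_i$ on $H^2(\cF)$, comparing symbols yields $\varphi_i(z)\varphi_j(z)=\varphi_j(z)\varphi_i(z)$ for $z\in\bD$, hence the same identity holds on the unit circle and the corresponding multiplication operators commute on $L^2(\cF)$; the $W_j$'s commute by hypothesis.

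The final and more substantive point is that the product $Y := Y_1\cdots Y_d$ is the minimal unitary extension of $V$. From the BCL identity $V_1\cdots V_d = M_z\oplus(W_1\cdots W_d)$ one reads off $\varphi_1(z)\cdots\varphi_d(z)=zI_\cF$, so $Y = M_\zeta\oplus(W_1\cdots W_d)$ on $L^2(\cF)\oplus\cK_u$. Noting that the BCL splitting of $V$ is precisely its Wold decomposition (pure shift $M_z$ on $H^2(\cF)$ plus the unitary summand $W_1\cdots W_d$ on $\cK_u$) and that $M_\zeta$ on $L^2(\cF)$ is the classical minimal unitary extension of $M_z$ on $H^2(\cF)$, one concludes that $Y$ is the minimal unitary extension of $V$. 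I anticipate this last verification to be the main (though mild) obstacle: it requires confirming that no further enlargement of the $\cK_u$ summand is needed, which is immediate once one observes that $W_1\cdots W_d$ is already unitary, so the minimal unitary extension of the full product acts as the identity extension on $\cK_u$.
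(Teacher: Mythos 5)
Your proof is correct, and the paper itself offers no argument at all for this lemma --- it simply cites \cite[Theorem 3.6]{BCL} --- so what you have written is essentially a self-contained reconstruction of the cited result, obtained in the natural way from the BCL model (Theorem \ref{Thm:BCL}): extend each symbol $U_jP_j^\perp + zU_jP_j$ from $H^2(\cF)$ to $L^2(\cF)$ and keep the unitary summand fixed. All four verification steps (unitarity of the boundary symbol, extension, commutativity via equality of analytic symbols, and minimality of $M_\zeta\oplus(W_1\cdots W_d)$ over $M_z\oplus(W_1\cdots W_d)$) are sound.
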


\begin{proof}
See \cite[Theorem 3.6]{BCL}.
\end{proof}

\subsection{Canonical commutative unitary tuple associated with a commutative tuple of contractions}  \label{S:Douglas}

Let $(T_1,T_2,\cdots,T_d)$ be a $d$-tuple of commutative contractions on a Hilbert space $\cH$ and $T=T_1T_2\cdots T_{d}$. Since $T$ is a contraction, the sequence $T^nT^{* n}$ converges in the strong operator topology. Let $Q$ be the positive semidefinite square root of
the limit operator, so
\begin{align}\label{Q}
Q^2:=\operatorname{SOT-}\lim T^nT^{* n}.
\end{align}
Then the operator $X^*:\overline{\operatorname{Ran}}\;Q\to\overline{\operatorname{Ran}}\;Q$ defined densely by
\begin{align}\label{theX}
X^*Q=QT^*,
\end{align}
is an isometry because for all $h\in\cH$,
\begin{align}\label{Xisoprf}
\langle Q^2h,h\rangle=\lim_{n\to\infty}\langle T^{n}T^{* n}T^*h,T^*h\rangle=\langle QT^*h,QT^*h\rangle.
\end{align}
Let $W_D^*$ on $\mathcal{R}_D\supseteq \overline{\operatorname{Ran }}\;Q$ be the minimal unitary extension of $X^*$.
Define the  operator $\widehat \cO_{D_{T^*}, T^*}:\cH\to H^2(\cD_{T^*})$ as
\begin{align}\label{observ}
\widehat \cO_{D_{T^*}, T^*}(z) h=\sum_{n=0}^\infty z^nD_{T^*} T^{*n} h, \text{ for every }h\in\cH.
\end{align}
Then the operator $\Pi_{D}:\mathcal{H}\to H^2(\mathcal D_{T^*})\oplus \mathcal{R}_D$ defined by
\begin{align}\label{Pi-D}
\Pi_{D}( h)= \widehat \cO_{D_{T^*}, T^*}(z) h\oplus Q(h)=\sum_{n=0}^\infty z^nD_{T^*} T^{*n} h\oplus Qh
\end{align}
is an isometry and satisfies the  intertwining property
\begin{equation} \label{DougInt}
\Pi_{D}T^*=(M_z\oplus W_D)^*\Pi_{D}
\end{equation}
(see e.g.\ \cite[Section 4]{Doug-Dilation}.
We conclude that with the isometry $V_D$ defined on $\cK_D:=H^2(\mathcal D_{T^*})\oplus \mathcal{R}_D$ as
\begin{align}\label{Vd}
V_D:=M_z\oplus W_D,
\end{align} $(\Pi_D,\cK_D,V_D)$ is an isometric lift of $T$.  One can furthermore shows that this lift is minimal
as well (see \cite[Lemma 1]{Doug-Dilation}).

If we now recall that $T = T_1,T_2,\dots ,T_d$, we see that for all $h\in\mathcal{H}$ and $i=1,2,\dots, d$,
\begin{eqnarray*}
  \langle T_iQ^2T_i^*h,h\rangle = \lim\langle T^n(T_iT_i^*){T^*}^nh,h \rangle \leq
  \lim\langle T^n{T^*}^nh,h\rangle = \langle Q^2h,h\rangle.
\end{eqnarray*}
By the Douglas Lemma \cite{Douglas}, this implies that there exists a contraction $X_i^*$ such that
\begin{align}\label{TheXi}
X_i^*Q=QT_i^*.
\end{align}
Since $T = T_1,T_2,\dots ,T_d$, it is evident that $(X_1,X_2,\dots,X_d)$ is a commutative  tuple of contractions and that
$$
X_1^*\cdots X_{d}^*=X^*,$$
where $X^*$ is as in (\ref{theX}). Since $X^*$ is an isometry, so also is each $X_i$. By Lemma \ref{special-ext} we have
a commutative unitary extension $(W_{\partial1}^*,W_{\partial2}^*,\dots, W_{\partial d}^*)$ of $(X_1^*,X_2^*,\dots, X_{d}^*)$
on the same space $\mathcal{R}_D\supseteq \overline{\operatorname{Ran}}\,Q$, where the minimal unitary extension
$W_D^*$ of $X^*$ acts and
\begin{align}\label{prodUni}
W_D=W_{\partial1}W_{\partial2}\cdots W_{\partial d}.
\end{align}
Note that this means
\begin{eqnarray}\label{TheSpace-R}
\mathcal{R}_D=\overline{\text{span}}\{W_D^{ n}x: x\in \overline{\operatorname{Ran}}\,Q\text{ and }n\geq0\}.
\end{eqnarray}
The tuple
\begin{equation}  \label{canonicalWpartial}
\underline{W}_\partial:=(W_{\partial1},W_{\partial2},\dots, W_{\partial d})
\end{equation}
 will be referred to as the {\em canonical commutative unitary tuple} associated with $(T_1,T_2,\dots,T_d)$. We next show that the canonical tuple of commutative unitary operators is uniquely determined by the tuple $(T_1,T_2,\dots,T_d)$.

\begin{lemma}\label{Lem:!Wparsi}
Let $\underline{T}=(T_1,T_2,\dots,T_d)$ on $\mathcal{H}$ and $\underline{T'}=(T_1',T_2',\dots,T_d')$ on $\mathcal{H'}$ be two tuples of commutative  contractions. Let $\underline{W}_\partial=(W_{\partial1},W_{\partial2},\dots,W_{\partial d})$ on $\mathcal{R}_D$ and $\underline{W}_{\partial}'=(W_{\partial1}',W_{\partial2}',\dots,W_{\partial d}')$ on $\mathcal{R}_D'$ be the respective commutative  tuples of unitaries obtained from $\underline{T}$ and $\underline{T}'$ as above, respectively. If $\underline{T}$ is unitarily equivalent to $\underline{T}'$ via the unitary similarity $\phi \colon \mathcal{H} \to \mathcal{H'}$, then so are $\underline{W}_{\partial}$ and $\underline{W}_{\partial}'$ via the induced unitary transformation $\tau_\phi \colon \mathcal{R}_D\to \mathcal{R}_D'$ determined by $\tau_\phi \colon W_D^nQh \to W_D'^n Q'\phi h$.  In particular, if $\underline{T} = \underline{T}'$, then $\underline{W}_{\partial} = \underline{W}_{\partial}'$.
\end{lemma}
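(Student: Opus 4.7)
The plan is to transfer the entire construction leading to $\underline{W}_{\partial}$ along $\phi$. First I would show that $\phi$ intertwines $Q$ with $Q'$. Since $\phi T_j = T_j' \phi$ for each $j$, the same identity holds for the product $T = T_1 \cdots T_d$, so $\phi T^n T^{*n} \phi^* = T'^n T'^{*n}$ for every $n$. Taking the strong-operator limit in \eqref{Q} gives $\phi Q^2 \phi^* = Q'^2$, and the continuous functional calculus applied to the positive square root yields $\phi Q = Q' \phi$. In particular $\phi$ restricts to a unitary $\overline{\operatorname{Ran}}\,Q \to \overline{\operatorname{Ran}}\,Q'$, and feeding $\phi T_i^* = T_i'^* \phi$ into the defining relation \eqref{TheXi} $X_i^* Q = Q T_i^*$ yields $\phi X_i^* = X_i'^* \phi$ on $\overline{\operatorname{Ran}}\,Q$ (and likewise $\phi X^* = X'^* \phi$).

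Next, I would define $\tau_\phi$ on the dense spanning set from \eqref{TheSpace-R} by $\tau_\phi(W_D^n Q h) := W_D'^n Q' \phi h$ and verify that it is a well-defined isometry. Iterating $W_D^* Q = Q T^*$ gives $W_D^{*k} Q = Q T^{*k}$ on $\overline{\operatorname{Ran}}\,Q$, and so unitarity of $W_D$ together with this identity yields, for $n \ge m$,
$$
\langle W_D^n Q h,\, W_D^m Q k\rangle_{\mathcal{R}_D} = \langle W_D^{n-m} Q h,\, Q k\rangle = \langle Q^2 h,\, T^{*(n-m)} k\rangle_{\mathcal{H}},
$$
with a symmetric formula for $n \le m$. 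The same computation in the primed data expresses $\langle W_D'^n Q' \phi h,\, W_D'^m Q' \phi k\rangle$ in terms of $Q'^2$ and $T'^*$, and the two inner products agree because $\phi Q^2 = Q'^2 \phi$ and $\phi T^* = T'^* \phi$. Hence $\tau_\phi$ extends uniquely to a unitary $\mathcal{R}_D \to \mathcal{R}_D'$, which by construction intertwines $W_D$ with $W_D'$.

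The main remaining calculation is the intertwining $\tau_\phi W_{\partial i} = W_{\partial i}' \tau_\phi$, which I would verify in adjoint form on the dense set. Because $W_D = W_{\partial 1}\cdots W_{\partial d}$ and the tuple is commutative, each $W_{\partial i}$ commutes with $W_D$; combining this with $W_{\partial i}^* Q = X_i^* Q = Q T_i^*$ and the primed analogues,
$$
\tau_\phi W_{\partial i}^* W_D^n Q h = \tau_\phi W_D^n Q T_i^* h = W_D'^n Q' \phi T_i^* h = W_D'^n Q' T_i'^* \phi h = W_{\partial i}'^* \tau_\phi W_D^n Q h,
$$
and adjoining gives the intertwining on all of $\mathcal{R}_D$. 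For the last assertion, taking $\mathcal{H} = \mathcal{H}'$, $\underline{T} = \underline{T}'$, and $\phi = I$ forces $\tau_\phi = I_{\mathcal{R}_D}$ directly from its defining formula, and the intertwining collapses to $W_{\partial i} = W_{\partial i}'$ for each $i$; this also supplies the uniqueness implicit in the definition \eqref{canonicalWpartial}. The one nonroutine move is the first one, commuting $\phi$ with the strong-operator limit defining $Q^2$ and then extracting $\phi Q = Q'\phi$ from the square root; after that, everything is a diagram chase powered by the commutativity of $\underline{W}_{\partial}$ with $W_D$ and by the simultaneous compatibility of $\tau_\phi$ with $Q$ and $W_D$ built into its defining formula.
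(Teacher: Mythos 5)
Your proposal is correct and follows essentially the same route as the paper: establish $\phi Q = Q'\phi$, transport the relations $W_{\partial i}^*Q = QT_i^*$ along $\phi$, define $\tau_\phi$ on the spanning set $\{W_D^nQh\}$, and verify the intertwining there using commutativity of each $W_{\partial i}$ with $W_D$. The only (welcome) difference is that you actually check well-definedness and unitarity of $\tau_\phi$ via the inner-product computation, which the paper declares ``evident,'' and you verify the final intertwining in adjoint form rather than writing $W_{\partial j}=W_D\prod_{i\neq j}W_{\partial i}^*$ as the paper does; these are cosmetic variants of the same argument.
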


\begin{proof}
That the tuples $\underline{W}_\partial$ and $\underline{W}_\partial'$ are obtained from $\underline{T}$ and $\underline{T}'$ respectively
means that
\begin{align}  & W_{\partial j}^*Q=QT_j^*, \;W_{\partial j}'^*Q'=Q'T_j'^* \text{ for each } j=1,2,\dots,d, \notag \\
& W_D=\prod_{j=1}^{\infty}W_{\partial j},\quad W_D'=\prod_{j=1}^{\infty}W_{\partial j}',  \label{ObtnMeans}
\end{align}
where $Q^2=\operatorname{SOT-lim}_{n \to \infty} T^n T^{*n}$ and $Q'^2=\operatorname{SOT-lim}_{n \to \infty} T'^n T'^{*n}$ with $T=T_1T_2\cdots T_d$ and $T'=T_1'T_2'\cdots T_d'$.  We shall show that set of equations (\ref{ObtnMeans}) is all that is needed to prove the lemma.

So suppose that the tuples $\underline{T}$ and $\underline{T}'$ are unitarily equivalent via the unitary similarity $\phi \colon \mathcal{H} \to \mathcal{H'}$. By definitions of $Q$ and $Q'$, it is easy to see that $\phi$ intertwines $Q$ and $Q'$ also and hence $\phi$ takes $\overline{\operatorname{Ran}}\,Q$ onto $\overline{\operatorname{Ran}}\,Q'$. By (\ref{ObtnMeans}) it follows that $\phi$ intertwines $W_{\partial j}^*|_{\overline{\operatorname{Ran}}\,Q}$ and $W_{\partial j}'^*|_{\overline{\operatorname{Ran}}\,Q'}$ for each $j=1,2,\dots, d$. Now remembering the formula (\ref{TheSpace-R}) for the spaces $\mathcal{R}_D$ and $\mathcal{R}_D'$, we define $\tau_\phi:\mathcal{R}_D\to\mathcal{R}_D'$ by
$$
\tau_\phi: W_D^{ n}x\mapsto W_D'^{ n}\phi x, \text{ for every } x\in \overline{\operatorname{Ran}}\,Q \text{ and } n\geq 0
$$
and extend linearly and continuously. It is evident that $\tau_\phi$ is unitary and intertwines $W_D$ and $W_D'$.
For a non-negative integer $n$, $j=1,2,\dots, d$ and $x$ in $\overline{\operatorname{Ran}}\,Q$, we compute
\begin{align*}
\tau_\phi W_{\partial j}(W_D^{n}x)  & =\tau_\phi W_D^{n+1}{\prod_{j\neq i=1}^{d} W_{\partial i}^*}x  =
W_D'^{ n+1}\phi\left({\prod_{j\neq i=1}^{d} W_{\partial i}^*}x\right) \\
& =W_D'^{n+1}{\prod_{j\neq i=1}^{d} W_{\partial i}'^*}\phi x=W_{\partial1}'W_D'^{n}\phi x=W_{\partial j}'\tau_\phi(W_D^{ n}x).
\end{align*}
and the lemma follows.
\end{proof}

\section{Fundamental operators for a tuple of commutative  contractions}   \label{S:fundamental}
The following result reduces the study of commutative contractive $d$-tuples to the study of a family of $\Gamma$-contractions.
This enables us to apply the substantial body of existing results concerning $\Gamma$-contractions to the study of
commutative contractive operator-tuples.

\begin{proposition}\label{Prop:Gamma&E-op}
Let $d\geq 2$ and $\underline{T}=(T_1,T_2,\dots,,T_d)$ be a $d$-tuple of commutative
contractions on a Hilbert space $\cH$ and let $T=T_1 \cdots T_d$.  Then for each $j=1,2,\dots, d$ and
$w\in \overline{\bD}$, the pair
\begin{align}\label{Gamma&E-op}
(S_{j}(w),T(w)):=(T_j+wT_{(j)},wT)
\end{align}is a $\Gamma$-contraction, where $T_{(j)}:=T_1\dots T_{j-1}T_{j+1}\dots T_{d}$.
\end{proposition}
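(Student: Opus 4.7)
The plan is to reduce to the two-variable (And\^o) setting, since for each fixed $j$ the operators $T_j$ and $T_{(j)}=T_1\cdots T_{j-1}T_{j+1}\cdots T_d$ are two commuting contractions whose product is $T$. Specifically, I would first invoke And\^o's theorem to obtain a pair of commuting unitary operators $(U_j,U_{(j)})$ on a Hilbert space $\cK\supseteq\cH$ that simultaneously dilates $(T_j,T_{(j)})$, so that
\begin{equation*}
q(T_j,T_{(j)})=P_\cH\,q(U_j,U_{(j)})\big|_\cH
\end{equation*}
for every polynomial $q$ in two commuting variables. Set $\widetilde{S}_j(w)=U_j+wU_{(j)}$ and $\widetilde{T}(w)=wU_jU_{(j)}$; by construction these are commuting normal operators on $\cK$.

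Next I would identify the joint spectrum of $(\widetilde{S}_j(w),\widetilde{T}(w))$ via the spectral theorem: since $(U_j,U_{(j)})$ is a commuting unitary pair, its joint spectrum lies in $\bT^2$, and by the polynomial spectral mapping theorem the joint spectrum of $(\widetilde S_j(w),\widetilde T(w))$ is contained in the set of points
\begin{equation*}
\{(\lambda_1+w\lambda_2,\,w\lambda_1\lambda_2)\ :\ |\lambda_1|=|\lambda_2|=1\}.
\end{equation*}
Writing $\mu_1=\lambda_1$ and $\mu_2=w\lambda_2$, both of which lie in $\overline{\bD}$ because $|w|\le1$, one sees from the original definition \eqref{Gamma-def1} that $(\mu_1+\mu_2,\mu_1\mu_2)\in\Gamma$; equivalently this follows directly from the factorization
\begin{equation*}
1-(\lambda_1+w\lambda_2)z+(w\lambda_1\lambda_2)z^2=(1-\lambda_1 z)(1-w\lambda_2 z),
\end{equation*}
which is nonzero for $|z|\le1$. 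Hence $(\widetilde S_j(w),\widetilde T(w))$ is a commuting normal pair with joint spectrum in $\Gamma$, so the continuous functional calculus for commuting normal operators gives $\|r(\widetilde S_j(w),\widetilde T(w))\|\le\|r\|_{\infty,\Gamma}$ for every $r\in\operatorname{Rat}(\Gamma)$.

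Finally I would transfer this back to $\cH$. For any polynomial $r(s,p)$ the composition $q(t_1,t_2):=r(t_1+wt_2,\,wt_1t_2)$ is a polynomial in two commuting variables, and the same operator identity $I-S_j(w)z+T(w)z^2=(I-T_jz)(I-wT_{(j)}z)$ on $\cH$ (a consequence of $T_jT_{(j)}=T=T_{(j)}T_j$) mirrors the scalar factorization; hence
\begin{equation*}
r(S_j(w),T(w))=q(T_j,T_{(j)})=P_\cH\,q(U_j,U_{(j)})\big|_\cH=P_\cH\,r(\widetilde S_j(w),\widetilde T(w))\big|_\cH,
\end{equation*}
which immediately yields $\|r(S_j(w),T(w))\|\le\|r\|_{\infty,\Gamma}$. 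Polynomial inequalities suffice because $\Gamma$ is polynomially convex, so this proves the pair is a $\Gamma$-contraction. The only subtle point is the joint compression identity for polynomials in $(T_j,T_{(j)})$, which is precisely what And\^o's theorem supplies; the rest is essentially the scalar factorization lifted to the operator level.
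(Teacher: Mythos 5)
Your proof is correct and rests on the same key observation as the paper's: the pair $(T_j+wT_{(j)},\,wT)$ is the symmetrization $(A+B,\,AB)$ of the two commuting contractions $A=T_j$ and $B=wT_{(j)}$. The paper simply cites the standard fact that any such symmetrization is a $\Gamma$-contraction, whereas you unpack the proof of that fact via And\^o's dilation theorem, the scalar factorization $1-(\lambda_1+w\lambda_2)z+w\lambda_1\lambda_2 z^2=(1-\lambda_1 z)(1-w\lambda_2 z)$, and polynomial convexity of $\Gamma$ --- all of which is sound.
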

\begin{proof}
Note that for each $j=1,2,\dots, d$ and $w\in \overline{\bD}$, the pair $(S_{j}(w),T(w))$ is actually the symmetrization of two commutative  contractions, viz., $T_j$ and $wT_{(j)}$. Since every such pair is a $\Gamma$-contraction, the result follows.
\end{proof}

Proposition \ref{Prop:Gamma&E-op} allows us to apply the $\Gamma$-contraction theory to obtain fundamental
operators associated with a $d$-tuple of commutative  contractions. This is the main result of this section.

\begin{thm}\label{fund-existence}
Let $d\geq 2$ and $\underline{T}=(T_1,T_2,\dots,,T_d)$ be a $d$-tuple of commutative  contractions on a Hilbert space $\cH$  and let $T=T_1T_2\cdots T_d$. Then
\begin{enumerate}

  \item For each $i=1,2,\dots,d$, there exist unique bounded operators $F_{i1},F_{i2}\in\cB(\cD_T)$ with
  $\nu(F_{i1}+wF_{i2})\leq1$  for all $w\in\overline{\bD}$ such that
  \begin{align}
   &  T_i-T_{(i)}^*T=D_TF_{i1}D_T,  \notag \\
   &  T_{(i)}-T_i^*T=D_TF_{i2}D_T.
  \label{En-fundeqn}
  \end{align}

  \item For each $i=1,2,\dots,d$, the pair $(F_{i1},F_{i2})$ as in part (1) is the unique solution
  $(X_{i1}, X_{i2}) = (F_{i1}, F_{i2})$ of the system of operator equations
  \begin{align}
  &  D_T  T_i = X_{i1}D_T+X_{i2}^*D_T T,  \notag \\
  &  D_T  T_{(i)} = X_{i2}D_T+X_{i1}^* D_T  T.
  \label{definingFs}
  \end{align}
\end{enumerate}
\end{thm}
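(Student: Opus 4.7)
The plan is to reduce the theorem to the single-variable $\Gamma$-contraction statement (Theorem~\ref{Thm:B-P-SR}) via the parameterized family of pairs $(T_i + w T_{(i)},\, w T)$, $w \in \overline{\bD}$, which by Proposition~\ref{Prop:Gamma&E-op} consists of $\Gamma$-contractions. The key observation making this approach work is that restricting the parameter to the unit circle $\bT$ forces $D_{wT} = D_T$, so the family of associated fundamental operators lives on the common space $\cD_T$ and can be combined termwise in $w$.

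For existence in~(1), fix an index $i$ and $w \in \bT$. A direct expansion gives
\begin{equation*}
(T_i + w T_{(i)}) - (T_i^* + \bar w T_{(i)}^*)(wT) = (T_i - T_{(i)}^* T) + w\,(T_{(i)} - T_i^* T),
\end{equation*}
so Theorem~\ref{Thm:B-P-SR}(1) supplies a unique $F_i(w) \in \cB(\cD_T)$ with $\nu(F_i(w)) \le 1$ and $D_T F_i(w) D_T = (T_i - T_{(i)}^* T) + w(T_{(i)} - T_i^* T)$. I would then set
\begin{equation*}
F_{i1} := \tfrac{1}{2}\bigl(F_i(1)+F_i(-1)\bigr), \qquad F_{i2} := \tfrac{1}{2}\bigl(F_i(1)-F_i(-1)\bigr);
\end{equation*}
evaluating the defining equation at $w = \pm 1$ and then summing and differencing yields exactly the two factorizations of part~(1). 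Since $D_T$ has dense range in $\cD_T$, the map $X \mapsto D_T X D_T$ is injective on $\cB(\cD_T)$, and hence $F_{i1} + w F_{i2} = F_i(w)$ for every $w \in \bT$; in particular $\nu(F_{i1} + w F_{i2}) \le 1$ on the circle.

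To extend the numerical-radius bound to all of $\overline{\bD}$, fix a unit vector $x \in \cD_T$ and set $\alpha = \la F_{i1}x, x\ra$, $\beta = \la F_{i2}x, x\ra$. Maximizing $|\alpha + w\beta| \le 1$ over $w \in \bT$ gives $|\alpha|+|\beta| \le 1$, and therefore $|\alpha + w\beta| \le |\alpha| + |w|\,|\beta| \le 1$ for every $w \in \overline{\bD}$; taking the supremum over $x$ yields $\nu(F_{i1}+wF_{i2}) \le 1$ throughout $\overline{\bD}$. Uniqueness of $(F_{i1}, F_{i2})$ then follows by reversing the argument: any other admissible pair $(F_{i1}', F_{i2}')$ assembles, via $w \mapsto F_{i1}' + w F_{i2}'$ on $\bT$, into a numerical-radius-one solution of the same defining equation; Theorem~\ref{Thm:B-P-SR}(1) then forces $F_{i1}' + w F_{i2}' = F_i(w) = F_{i1} + w F_{i2}$ on $\bT$, and evaluating at $w = \pm 1$ gives $F_{i1}' = F_{i1}$ and $F_{i2}' = F_{i2}$.

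For part~(2), I would apply Theorem~\ref{Thm:B-P-SR}(2) to each $\Gamma$-contraction $(T_i + wT_{(i)},\, wT)$ with $|w|=1$: since $D_{wT} = D_T$ there, it asserts that $Y(w) := F_{i1} + wF_{i2}$ is the unique solution of $D_T(T_i + w T_{(i)}) = Y D_T + w Y^* D_T T$. Expanding and reading off the constant and $w$-linear coefficients (concretely, evaluating at $w = \pm 1$ and again taking sums and differences) recovers exactly the system appearing in part~(2); uniqueness of that system follows by the reverse procedure, since any candidate pair $(X_{i1},X_{i2})$ assembles via $Y(w) = X_{i1} + w X_{i2}$ into a solution of the Theorem~\ref{Thm:B-P-SR}(2) equation on $\bT$ and must therefore coincide with $F_{i1} + w F_{i2}$. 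The only real subtlety in the whole argument is the bootstrapping of the numerical-radius estimate from $\bT$ to $\overline{\bD}$; the remainder is bookkeeping using the uniqueness clauses of Theorem~\ref{Thm:B-P-SR}.
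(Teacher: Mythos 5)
Your proof is correct and follows essentially the same route as the paper: both reduce to Theorem \ref{Thm:B-P-SR} via the circle-parametrized family of $\Gamma$-contractions $(T_i+wT_{(i)},wT)$ (using $D_{wT}=D_T$ for $|w|=1$), establish that the resulting fundamental operator $F_i(w)$ is an affine pencil in $w$, and then read off its coefficients together with the uniqueness clauses. The only differences are cosmetic: you interpolate at $w=\pm 1$ where the paper uses $F_i(0)$ and a difference quotient, and you spell out the maximum-modulus step extending the numerical-radius bound from $\bT$ to $\overline{\bD}$, which the paper leaves implicit.
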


\begin{proof}
 Note first that Proposition \ref{Prop:Gamma&E-op} ensures us that for all $w\in\bT$, the pairs $(S_{i}(w),T(w)):=(T_i+wT_{(i)},wT)$ are $\Gamma$-contractions. Hence by Theorem \ref{Thm:B-P-SR} there exist operators $F_{i}(w)\in \cB(\cD_T)$ such that
$$
S_{i}(w)-S_{i}(w)^*T(w)=D_TF_{i}(w)D_T
$$
which in turn simplifies to
\begin{equation} \label{fund2}
(T_i-T_{(i)}^*T)+w(T_{(i)}-T_i^*T)=D_TF_{i}(w)D_T.
\end{equation}
Let us introduce the notation
$$
L_0 = T_i - T_{(i)}^* T, \quad L_1 = T_{(i)} - T_i^* T, \quad L(w) = L_0 + w L_1
$$
so that we can write \eqref{fund2} more compactly as
\begin{equation}  \label{compact}
L(w) = D_T F_i(w) D_T.
\end{equation}
Our goal is to show that then necessarily $F_i(w)$ has the pencil form
\begin{equation}  \label{F-pencil}
  F_i(w) = F_{i1} + w F_{i2}
\end{equation}
for some uniquely determined operators $F_{i1}$ and $F_{i2}$ in $\cB(\cD_T)$.
Note that we recover $L_0$ and $L_1$ from $L(w)$ via the formulas
$$
 L_0 = L(0), \quad L_1 = \frac{ L(w) - L(0)}{w} \text{ for any } w \in {\mathbb D} \setminus \{0\}.
$$
Since $L(0) = D_T F_i(0) D_T$, it is natural to set
\begin{equation}  \label{Fi0}
    F_{i1} = F_i(0).
\end{equation}
Similarly, since we recover $L_1$ from $L(w)$ via the formula
$$
L_1 = \frac{ L(w) - L(0) }{w} \text{ for any } w \in {\mathbb D} \setminus \{0\},
$$
it is natural to set
\begin{equation}   \label{Fi2}
  F_{i2} = \frac{ F_i(w) - F_i(0)}{w} \text{ for } w \in {\mathbb D} \setminus \{0\}.
\end{equation}
To see that the right-hand side of \eqref{Fi2} is independent of $w$, we note that
$$
 L_1 = \frac{ L(w) - L(0)}{w} = D_T \frac{ F_i(w) - F_i(0)}{w} D_T.
$$
Since $L_1$ is independent of $w$ and $(F_i(w) - F_i(0))/w \in \cB(\cD_T)$,  it follows that,
for any two points $w, w' \in {\mathbb D} \setminus \{0\}$, we have
$$
  D_T \bigg( \frac{ F_i(w) - F_i(0)}{w}  - \frac{ F_i(w') - F_i(0)}{w'} \bigg) D_T = 0.
$$
From the general fact
\begin{equation}   \label{fact}
X \in \cL(\cD_T), \, D_T X D_T = 0 \Rightarrow X = 0,
\end{equation}
it follows that $\frac{F_i(w) - F_i(0)}{w}  = \frac{ F_i(w') - F_i(0)}{w'} $ and hence $F_{i2}$ is well-defined by
\eqref{Fi2}. From the definitions we see that $L_0 + w L_1 = D_T (F_{i1} + w F_{i2}) D_T$ and hence,
again by the uniqueness statement \eqref{fact}, we have established that $F_i(w)$ has the pencil form
\eqref{F-pencil} as wanted.

Finally equations \eqref{En-fundeqn} now follow by equations coefficients in the pencil identity
$L(w) = D_T T(w) D_T$.

To prove part (2), we see by part (2) of Theorem \ref{Thm:B-P-SR} that for each $i=1,2,\dots,d$ and $w\in\mathbb{T}$, the operator $F_{i}(w)$ is the unique operator that satisfies
$$
D_TS_{i}(w)=F_{i}(w)D_T+F_{i}(w)^*D_TT(w).
$$
Hence it follows that for all $w\in\mathbb{T}$ we have
$$
D_T(T_i+wT_{(i)})=(F_{i1}+wF_{i2})D_T+w(F_{i1}+wF_{i2})^*D_TT.
$$
A comparison of the constant terms and the coefficients of $w$ gives the equations in (\ref{definingFs}).

The uniqueness part follows from that of
the function $F_i(w)$ as follows.  If $F_{i1}'$ and $F_{i2}'$ are operators on $\cD_T$ that satisfy (\ref{definingFs}), then setting $F_i(w)':=F_{i1}'+wF_{i2}'$ gives
$$
D_TS_{i}(w)=F_{i}(w)'D_T+F_{i}(w)'^*D_TT(w).
$$
By the uniqueness in part (2) of Theorem \ref{Thm:B-P-SR}, we conclude $F_{i}(w)=F_i(w)'$ proving $F_{i1}=F_{i1}'$ and $F_{i2}=F_{i2}'$
for all $i=1,2,\dots,d$.
\end{proof}

\begin{definition}  \label{D:fund-op}
For a $d$-tuple $\underline{T}=(T_1,T_2,\dots,T_d)$ of commutative  contractions on a Hilbert space $\cH$, the unique operators $\{F_{i1},F_{i2}:i=1,2,\dots,d\}$ obtained in Theorem \ref{fund-existence} are called the {\em fundamental operators} of $\underline{T}$.
The fundamental operators of the adjoint tuple $\underline{T}^*=(T_1^*,T_2^*,\dots,T_d^*)$ will be denoted by $\{G_{i1},G_{i2}:i=1,2,\dots,d\}$.
\end{definition}

The following is a straightforward consequence of Theorem \ref{fund-existence}.

\begin{corollary}
Let $\underline{T}=(T_1,T_2,\dots,T_d)$ be a $d$-tuple of commutative  contractions on a Hilbert space $\cH$. Then the fundamental operators $\{G_{i1},G_{i2}:i=1,\dots,d\}$ of the adjoint tuple $\underline{T}^*=(T_1^*,T_2^*,\dots,T_d^*)$ are the unique operators  satisfying the systems
of  equations
\begin{align}\label{En-fundeqn*}
  \begin{cases}
    T_i^*-T_{(i)}T^*=D_{T^*}G_{i1}D_{T^*} \text{ and}\\
    T_{(i)}^*-T_iT^*=D_{T^*}G_{i2}D_{T^*}
    \end{cases}
  \end{align}
and
  \begin{align}\label{definingFs*}
  \begin{cases}
    D_{T^*}T_i^*=G_{i1}D_{T^*}+G_{i2}^*D_{T^*}T^* \text{ and}\\
    D_{T^*}T_{(i)}^*=G_{i2}D_{T^*}+G_{i1}^*D_{T^*}T^*
  \end{cases}
  \end{align}
for each $i=1,2,\dots,d$:
\end{corollary}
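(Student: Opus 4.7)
The plan is simply to apply Theorem \ref{fund-existence} to the adjoint tuple $\underline{T}^{*}=(T_1^{*},T_2^{*},\dots,T_d^{*})$ and read off the conclusions. Since $T_1,\dots,T_d$ are mutually commuting contractions, so are their adjoints, and therefore $\underline{T}^{*}$ is itself a $d$-tuple of commutative contractions on $\cH$ to which the theorem applies.

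Before quoting the theorem, I would record the identifications that allow one to translate the general formulas \eqref{En-fundeqn}--\eqref{definingFs} into the adjoint setting. First, the product of the adjoint tuple satisfies
\[
T_1^{*}T_2^{*}\cdots T_d^{*} \;=\;(T_d T_{d-1}\cdots T_1)^{*}\;=\;(T_1T_2\cdots T_d)^{*}\;=\;T^{*},
\]
using commutativity of the $T_j$'s in the middle step. Next, the $(i)$-th ``omitted product'' for the adjoint tuple is
\[
(T^{*})_{(i)} \;=\; T_1^{*}\cdots T_{i-1}^{*}T_{i+1}^{*}\cdots T_d^{*}\;=\;(T_{(i)})^{*},
\]
again by commutativity. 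Finally, the defect operator of the product of $\underline{T}^{*}$ is $D_{(T^{*})}=D_{T^{*}}$.

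With these identifications, Theorem \ref{fund-existence}(1) applied to $\underline{T}^{*}$ produces unique operators $G_{i1},G_{i2}\in\cB(\cD_{T^{*}})$ with $\nu(G_{i1}+wG_{i2})\leq 1$ for all $w\in\overline{\bD}$ satisfying
\[
T_i^{*}-(T^{*})_{(i)}^{*}\,T^{*}=D_{T^{*}}G_{i1}D_{T^{*}},\qquad (T^{*})_{(i)}-(T_i^{*})^{*}\,T^{*}=D_{T^{*}}G_{i2}D_{T^{*}},
\]
which upon substituting $(T^{*})_{(i)}=T_{(i)}^{*}$ (so $(T^{*})_{(i)}^{*}=T_{(i)}$) are precisely the equations \eqref{En-fundeqn*}. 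Similarly, Theorem \ref{fund-existence}(2) applied to $\underline{T}^{*}$ gives that $(G_{i1},G_{i2})$ is the unique solution $(X_{i1},X_{i2})=(G_{i1},G_{i2})$ of
\[
D_{T^{*}}T_i^{*}=X_{i1}D_{T^{*}}+X_{i2}^{*}D_{T^{*}}T^{*},\qquad D_{T^{*}}(T^{*})_{(i)}=X_{i2}D_{T^{*}}+X_{i1}^{*}D_{T^{*}}T^{*},
\]
which is exactly the system \eqref{definingFs*} after substituting $(T^{*})_{(i)}=T_{(i)}^{*}$. There is no substantive obstacle here; the only point that needs a remark is the use of commutativity to identify $(T^{*})_{(i)}$ with $T_{(i)}^{*}$, and after that the corollary is an immediate restatement of Theorem \ref{fund-existence} for the adjoint tuple.
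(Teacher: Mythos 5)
Your proposal is correct and matches the paper's approach: the paper states the corollary as a straightforward consequence of Theorem \ref{fund-existence} applied to the adjoint tuple, which is exactly what you do, with the identifications $T_1^{*}\cdots T_d^{*}=T^{*}$, $(T^{*})_{(i)}=T_{(i)}^{*}$, and $D_{(T^{*})}=D_{T^{*}}$ spelled out correctly.
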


We next note some additional properties of the fundamental operators. These properties will not be used in this paper but  are of interest
in their own right.

\begin{proposition}\label{further-prop}
Let $\underline{T}=(T_1,T_2,\dots,T_d)$ be a $d$-tuple of commutative contractions on a Hilbert space $\cH$ and $T=T_1T_2\cdots T_d$.
Let $\{F_{j1},F_{j2}:j=1,\dots,d\}$ and $\{G_{j1},G_{j2}:j=1,\dots,d\}$ be the fundamental operators of $\underline{T}$ and $\underline{T}^*$,
respectively. Then for each $j=1,2,\dots,d$,
\begin{enumerate}
  \item $PF_{j1}=G_{j1}^*T|_{\mathcal{D}_T}$;
  \item $D_TF_{j1}=(T_jD_T-D_{T^*}G_{j2}T)|{\mathcal{D}_T}, \;D_TF_{j2}=(T_{(j)}D_{T}-D_{T^*}G_{j1}T)|{\mathcal{D}_T}$;
  \item $(F_{j1}^*D_TD_{T^*}-F_{j2}T^*)|_{\mathcal{D}_{T^*}}=D_TD_{T^*}G_{j1}-T^*G_{j2}^* \text{ and }\\(F_{j2}^*D_TD_{T^*}-F_{j1}T^*)|_{\mathcal{D}_{T^*}}=D_TD_{T^*}G_{j2}-T^*G_{j1}^*$.
\end{enumerate}
\end{proposition}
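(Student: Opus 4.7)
My plan is to derive all three identities by systematic manipulation of the eight defining equations for the fundamental operators---\eqref{En-fundeqn}, \eqref{definingFs}, \eqref{En-fundeqn*}, and \eqref{definingFs*}---together with the standard intertwining $TD_T = D_{T^*}T$ (equivalently $D_T T^* = T^* D_{T^*}$) and the commutation $T_j T = T T_j$. I would prove part (2) first, then derive part (1) from it, and finally handle part (3) with the same toolkit. I read the symbol ``$P$'' in part (1) as the product operator $T$ (a remnant of the $\Gamma$-contraction convention, in which the product is traditionally denoted $P$), so that part (1) asserts $T F_{j1} = G_{j1}^* T|_{\mathcal{D}_T}$.

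For part (2), I would verify $D_T F_{j1} = T_j D_T - D_{T^*} G_{j2} T$ after right-multiplying both sides by $D_T$: the left side becomes $D_T F_{j1} D_T = T_j - T_{(j)}^* T$ by \eqref{En-fundeqn}, while the right side becomes $T_j(I - T^* T) - D_{T^*} G_{j2} D_{T^*} T = T_j - T_j T^* T - (T_{(j)}^* - T_j T^*) T$ via \eqref{En-fundeqn*} and $TD_T = D_{T^*} T$, and this simplifies to $T_j - T_{(j)}^* T$. Since both sides of the original identity are continuous on $\cH$ and agree on $\operatorname{Ran} D_T$, they agree on $\mathcal{D}_T = \overline{\operatorname{Ran}}\,D_T$. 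The companion identity for $F_{j2}$ follows from the symmetry swap $T_j \leftrightarrow T_{(j)}$ (which induces $F_{j1}\leftrightarrow F_{j2}$ and $G_{j1} \leftrightarrow G_{j2}$ at the level of defining equations). For part (1), I would left-multiply the just-proved identity by $T$: using $T D_T = D_{T^*} T$ and $T T_j = T_j T$, the right side rearranges to $(T_j D_{T^*} - T D_{T^*} G_{j2}) T$, and the adjoint of the first equation in \eqref{definingFs*} identifies the parenthesized factor as $D_{T^*} G_{j1}^*$. Thus $D_{T^*}(TF_{j1} - G_{j1}^* T) = 0$ on $\mathcal{D}_T$; since $T \mathcal{D}_T \subseteq \mathcal{D}_{T^*}$, the difference lies in $\mathcal{D}_{T^*} \cap \ker D_{T^*} = \{0\}$.

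For part (3), I would set $A := F_{j1}^* D_T D_{T^*} - F_{j2} T^* - D_T D_{T^*} G_{j1} + T^* G_{j2}^*$ on $\mathcal{D}_{T^*}$ and show $A = 0$ by the two-step ``range-and-kernel'' argument: (i) $\operatorname{Ran} A \subseteq \mathcal{D}_T$, and (ii) $D_T A = 0$. Step (i) is routine except for the term $T^* G_{j2}^*$, which lands in $\mathcal{D}_T$ by the general inclusion $T^* \mathcal{D}_{T^*} \subseteq \mathcal{D}_T$ (from $T^* D_{T^*} = D_T T^*$ and density). For step (ii) I would expand $D_T A$ using $D_T F_{j1}^* D_T = T_j^* - T^* T_{(j)}$ (adjoint of \eqref{En-fundeqn}), the already-proved part (2) for $F_{j2}$ to process $D_T F_{j2} T^*$, and $D_T^2 = I - T^* T$ together with $D_T T^* = T^* D_{T^*}$. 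The resulting expression splits cleanly into two blocks: one vanishes after right-multiplying the first equation of \eqref{En-fundeqn*} by $D_{T^*}$, while the other vanishes by the adjoint of the second equation in \eqref{definingFs*}, namely $T_{(j)} D_{T^*} = D_{T^*} G_{j2}^* + T D_{T^*} G_{j1}$. Combining (i) and (ii) gives $\operatorname{Ran} A \subseteq \mathcal{D}_T \cap \ker D_T = \{0\}$, so $A = 0$; the second identity of part (3) follows from the same symmetry swap as in part (2).

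The main obstacle I anticipate is domain/codomain bookkeeping: $F_{j1}, F_{j2}$ live on $\mathcal{D}_T$ (extended by zero on $\mathcal{D}_T^\perp$) while $G_{j1}, G_{j2}$ live on $\mathcal{D}_{T^*}$, so at each stage one must carefully confirm that the compositions are defined and that both sides of an identity land in a common subspace before invoking the dense-range-versus-kernel cancellation that closes the argument. Once the right groupings of terms are identified, the underlying algebra reduces to routine verification from the eight defining equations and the two intertwining relations.
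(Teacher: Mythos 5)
Your proof is correct, but it takes a genuinely different route from the paper. The paper's proof is essentially a citation: it first observes (via And\^o's inequality applied to $f\circ\pi_{\mathbb{D}^2,\mathbb{E}}$) that each triple $(T_j,T_{(j)},T)$ is a tetrablock contraction, and then invokes Lemmas 8, 9 and 10 of \cite{BS-CAOT}, which state exactly the three identities for the fundamental operators of a tetrablock contraction and its adjoint. You instead derive everything from scratch out of the defining equations \eqref{En-fundeqn}, \eqref{definingFs}, \eqref{En-fundeqn*}, \eqref{definingFs*} and the intertwinings $TD_T=D_{T^*}T$, $D_TT^*=T^*D_{T^*}$; I checked your computations (the telescoping in part (2), the identification $T_jD_{T^*}-TD_{T^*}G_{j2}=D_{T^*}G_{j1}^*$ from the adjoint of \eqref{definingFs*} in part (1), and the reduction of $D_TA$ to $T^*\bigl(-T_{(j)}D_{T^*}+TD_{T^*}G_{j1}+D_{T^*}G_{j2}^*\bigr)=0$ in part (3)) and they all close up correctly, as do the range-versus-kernel arguments using $\mathcal{D}_T\cap\ker D_T=\{0\}$ and $\mathcal{D}_{T^*}\cap\ker D_{T^*}=\{0\}$. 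You also correctly decoded the undefined symbol $P$ in part (1) as the product operator $T$ (a relic of the tetrablock notation $(A,B,P)$ in the cited source). What the paper's route buys is brevity and a placement of the result inside the existing tetrablock literature; what your route buys is self-containedness --- it shows the identities are purely formal consequences of the fundamental-operator equations and the symmetry swap $T_j\leftrightarrow T_{(j)}$, with no appeal to And\^o's theorem or to the tetrablock machinery --- which is in fact more consistent with the philosophy the authors articulate in the remark following Theorem \ref{fund-existence}, where they deliberately avoid the tetrablock detour for the existence result.
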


\begin{proof}
Let $(T_1,T_2)$ be a commutative pair of contractions on a Hilbert space $\cH$. We claim that the triple $(T_1,T_2,T_1T_2)$ is a tetrablock contraction or equivalently the closure of the tetrablock domain $\mathbb E$ as in (\ref{E-def1}) is a spectral set for $(T_1,T_2,T_1T_2)$. Let $\pi_{\mathbb{D}^2,\mathbb{E}}:\mathbb{D}^2\to\mathbb{E}$ be the map defined by
\begin{align}\label{BidisktoTetra}
\pi_{\mathbb{D}^2,\mathbb{E}} \colon(z_1,z_2)\mapsto (z_1, z_2, z_1z_2)
\end{align}
and let $f$ be any polynomial in three variables. Then by And\^o's theorem
\begin{align*}
\|f(T_1,T_2,T_1T_2)\|=\|f\circ\pi_{\mathbb{D}^2,\mathbb{E}}(T_1,T_2)\|\leq \|f\circ\pi_{\mathbb{D}^2,\mathbb{E}}\|_{\infty,\mathbb{D}^2}\leq \|f\|_{\infty,\mathbb{E}}.
\end{align*}
Therefore the triple $(T_1,T_2,T_1T_2)$ is a tetrablock contraction whenever $(T_1,T_2)$ is a commutative pair of contractions.

Thus, given a $d$-tuple $\underline{T}=(T_1,T_2,\dots,T_d)$ of commutative contractions on a Hilbert space $\cH$, there is an associated
 family of tetrablock contractions, viz.,
\begin{align} \label{FamilyTetra}
(T_j,T_{(j)},T),\quad j=1,2,\dots,d.
\end{align}
Hence parts (1), (2) and (3) of Proposition \ref{further-prop} follow immediately from Lemmas 8, 9 and 10 of \cite{BS-CAOT}, respectively.
\end{proof}

\begin{remark}
We note that a result parallel to Theorem \ref{fund-existence} appears in the theory of tetrablock contractions
(see \cite[Theorem 3.4]{Tirtha14}, namely:  {\em if $(A,B,T)$ is a tetrablock contraction, then  \cite{Tirtha14} that
there exists two bounded operators $F_1,F_2$ acting on $\cD_T$ with $\nu(F_1+zF_2)\leq 1$, for every $z\in\overline{\mathbb{D}}$ such that}
$$
  A-B^*T=D_TF_1D_T \text{ and } B-A^*T=D_TF_2D_T.
$$
Moreover,  Corollary 4.2 in \cite{Tirtha14} shows that {\em $F_1,F_2$ are the unique operators $(X_1, X_2)$ such that}
$$
D_T A=X_1D_T+X_2^*D_T T,   \quad D_T B=X_2 D_T+X_1^* D_T T.
$$
These unique operators $F_1,F_2$ are called the {\em fundamental operators} of the tetrablock contraction $(A,B,T)$.

Furthermore, it is possible to arrive at the result of Theorem \ref{fund-existence} via applying these
results to the special tetrablock contractions (\ref{FamilyTetra}).  Our proof of Theorem \ref{fund-existence} instead
relies only on the properties of fundamental operators for $\Gamma$-contractions.
\end{remark}

\section{Joint Halmos dilation of fundamental operators}  \label{S:Halmos}

The following notion of dilation for the case $d=1$ goes back to a 1950 paper of  Halmos \cite{Halmos}, hence our term
{\em joint Halmos dilation}.

\begin{definition}
For a tuple $\underline{A}=(A_1,A_2,\dots, A_d)$ of operators on a Hilbert space $\mathcal{H}$, a tuple $\underline{B}=(B_1,B_2,\dots,B_d)$
of operators acting on a Hilbert space $\mathcal{K}$ containing $\mathcal{H}$ is called a {\em{joint Halmos dilation}} of $\underline{A}$, if
there exists an isometry $\Lambda:\mathcal{H}\to\mathcal{K}$ such that $A_i=\Lambda^*B_i\Lambda$ for each $i=1,2,\dots,d$.
\end{definition}

\begin{lemma}\label{isometries}
Let $d\geq 2$ and $(T_1,T_2,\dots,T_d)$ be a $d$-tuple of commutative  contractions on $\cH$ and $T=T_1T_2\cdots T_d$.
\begin{enumerate}
\item Let $\alpha=(j_1,\dots,j_k)$ be a $k$-tuple such that $1\leq j_1<j_2<\cdots<j_k\leq d$. Consider the $k$-tuple $(T_{j_1},T_{j_2},\dots,T_{j_k})$ and define $T_{ \alpha} = T_{j_1}\cdots T_{j_k}$.
Let $\Delta_\alpha:\cD_{T_{\alpha}}\to\cD_{T_{j_1}}\oplus\cD_{T_{j_2}}\oplus\cdots \oplus \cD_{T_{j_k}}$ be the operator defined by
\begin{align}
& \Delta_\alpha \colon D_{T_{\alpha}}h  \mapsto  \notag \\
& \quad D_{T_{j_1}}T_{j_2}\cdots T_{j_k}h \oplus D_{T_{j_2}}T_{j_3} \cdots T_{j_k}h\oplus\cdots\oplus D_{T_{j_{k-1}}}T_{j_k}h
\oplus D_{T_{j_k}}h
\label{Deltalpha}
\end{align}
for all $h\in\cH$. Then $\Delta_\alpha$ is an isometry.

\item For each $j=1,2,\dots, d$, the operator
$$
\Lambda_j \colon \cD_T\to\cD_{T_1}\oplus\cD_{T_2}\oplus\cdots\oplus\cD_{T_d}
$$
given by
\begin{align}\label{Lambdaj}
\Lambda_j \colon D_Th \mapsto D_{T_j}T_{(j)}h\oplus \Delta_{(j)} D_{T_{(j)}}h,\text{ for all }h\in \cH
\end{align}
is an isometry, where $\Delta_{(j)}$ for the tuple ${(j)}=(1,\dots,j-1,j+1,\dots,d)$ is as in \eqref{Deltalpha}.

\item For each $j=1,2\dots, d$, the operator $U_j^*\colon \operatorname{Ran \;\Lambda_j}\to\cD_{T_1}\oplus\cD_{T_2}\oplus\cdots\oplus\cD_{T_d}$ defined by
\begin{align}\label{Uj*}
U_j^*\colon D_{T_j}T_{(j)}h\oplus \Delta_{(j)} D_{T_{(j)}}h\mapsto D_{T_j}h\oplus \Delta_{(j)} D_{T_{(j)}}T_jh \text{ for all }h\in \cH
\end{align}
is an isometry.
\item After possibly enlarging the Hilbert space $\cD_{T_1}\oplus\cD_{T_2}\oplus\cdots\oplus\cD_{T_d}$ to a larger Hilbert space
$$
 \cF:= \cD_{T_1}\oplus\cD_{T_2}\oplus\cdots\oplus\cD_{T_d} \oplus \cE
$$
for some auxiliary Hilbert space $\cE$,
\begin{itemize}
\item[(a)]the isometries $U_j^*$ in part (3) can be extended to be unitary operators on $\cF$ (still denoted as $U_j^*$).

\item[(b)] for each $j=1,2\dots, d$, there exists a unitary operator $\tau_j$ on $\cF$ such that
\begin{align}\label{varphi-j}
  \tau_j\Lambda_j=\Lambda_1,
\end{align}
where  $\tau_1=I_{\cF}$.
\end{itemize}
\end{enumerate}
\end{lemma}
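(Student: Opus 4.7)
The proof splits naturally into the four parts, and the recurring tool is the telescoping identity
\[
\|D_{AB} h\|^2 = \|D_B h\|^2 + \|D_A B h\|^2,
\]
valid for any two contractions $A, B$ with $AB$ also a contraction; this is immediate from $\|D_{AB}h\|^2 = \|h\|^2 - \|ABh\|^2$ by adding and subtracting $\|Bh\|^2$.

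For part (1), I would iterate this identity, peeling off the factors $T_{j_k}, T_{j_{k-1}}, \ldots, T_{j_1}$ one at a time, to obtain $\|D_{T_\alpha} h\|^2 = \sum_{\ell=1}^{k} \|D_{T_{j_\ell}} T_{j_{\ell+1}} \cdots T_{j_k} h\|^2$, which is exactly $\|\Delta_\alpha D_{T_\alpha} h\|^2$; since $D_{T_\alpha}\cH$ is dense in $\cD_{T_\alpha}$, the operator $\Delta_\alpha$ then extends to an isometry. Part (2) follows by applying the telescoping identity to $T = T_j T_{(j)}$ and using part (1) on the $(d-1)$-tuple $(j)$ to replace $\|D_{T_{(j)}} h\|^2$ by $\|\Delta_{(j)} D_{T_{(j)}} h\|^2$, giving $\|D_T h\|^2 = \|D_{T_j} T_{(j)} h\|^2 + \|\Delta_{(j)} D_{T_{(j)}} h\|^2 = \|\Lambda_j D_T h\|^2$.

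Part (3) is where commutativity genuinely enters and I expect it to be the main (albeit mild) conceptual step. Using part (1) to replace the $\Delta_{(j)}$-norms by the corresponding $D_{T_{(j)}}$-norms on both sides, the isometry assertion reduces to
\[
\|D_{T_j} T_{(j)} h\|^2 + \|D_{T_{(j)}} h\|^2 = \|D_{T_j} h\|^2 + \|D_{T_{(j)}} T_j h\|^2,
\]
and a single application of the telescoping identity collapses the two sides to $\|h\|^2 - \|T_j T_{(j)} h\|^2$ and $\|h\|^2 - \|T_{(j)} T_j h\|^2$ respectively; these agree precisely because $T_j$ and $T_{(j)}$ commute.

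For part (4), I would use the standard extension-by-enlargement trick: choose an auxiliary Hilbert space $\cE$ of sufficiently large (say, infinite) dimension and set $\cF = \cD_{T_1} \oplus \cdots \oplus \cD_{T_d} \oplus \cE$. For (4a), viewing $U_j^*$ as an isometry from $\operatorname{Ran}\Lambda_j \subset \cF$ into $\cF$, the orthogonal complements of its domain and its range inside $\cF$ both contain the summand $\cE$ and hence have equal Hilbert-space dimension, so any unitary identification between them extends $U_j^*$ to a unitary on $\cF$. For (4b), the map $\Lambda_1 \Lambda_j^{-1}\colon \operatorname{Ran}\Lambda_j \to \operatorname{Ran}\Lambda_1$ (well-defined since $\Lambda_j$ is one-to-one onto its range by part (2)) is unitary between these subspaces and tautologically satisfies $(\Lambda_1 \Lambda_j^{-1})\Lambda_j = \Lambda_1$; the same enlargement argument extends it to a unitary $\tau_j$ on $\cF$, and for $j=1$ one simply takes $\tau_1 = I_{\cF}$.
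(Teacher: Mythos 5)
Your proof is correct, and for parts (1)--(3) and (4a) it is essentially the paper's argument: the same telescoping identity $\|D_{AB}h\|^2=\|D_Bh\|^2+\|D_ABh\|^2$ drives (1) and (2); part (3) is the same computation, with commutativity of $T_j$ and $T_{(j)}$ supplying the equality of the two collapsed sides (the paper only says ``by a similar computation,'' so making the role of commutativity explicit is a small improvement); and (4a) is the same codimension-matching extension after adjoining an auxiliary $\cE$ (just note that ``both complements contain $\cE$'' does not by itself give equal dimension --- you should take $\dim \cE$ infinite and at least $\dim(\cD_{T_1}\oplus\cdots\oplus\cD_{T_d})$, so that each complement has dimension exactly $\dim\cE$). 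The one genuine divergence is (4b). You define $\tau_j$ directly as a unitary extension of the isometry $\Lambda_1\Lambda_j^*|_{\operatorname{Ran}\Lambda_j}\colon \Lambda_j D_T h\mapsto \Lambda_1 D_T h$ between $\operatorname{Ran}\Lambda_j$ and $\operatorname{Ran}\Lambda_1$, whereas the paper sets $\tau_j=U_1U_j^*$ and then simply asserts that this sends $\Lambda_j D_T h$ to $\Lambda_1 D_T h$. Your route is cleaner and safer: a componentwise check (for instance of the $\cD_{T_d}$-component, which is $D_{T_d}T_jh$ for $U_j^*\Lambda_jD_Th$ but $D_{T_d}T_1h$ for $U_1^*\Lambda_1D_Th$) shows that $U_j^*\Lambda_jD_Th$ and $U_1^*\Lambda_1D_Th$ need not agree, so the paper's explicit formula does not obviously implement the map both proofs actually need; what the paper verifies in its display is precisely the map you take as the definition. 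Both constructions deliver the same intertwining $\tau_j\Lambda_j=\Lambda_1$, but yours gets it by construction rather than through an unverified formula.
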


\begin{proof}[Proof of Part (1)]
Note that the norm of the vector on the RHS of \eqref{Deltalpha} is
\begin{equation}  \label{RHSvectornorm}
\|D_{T_{j_1}}T_{j_2}\cdots T_{j_k}h\|^2 + \|D_{T_{j_2}}T_{j_3} \cdots T_{j_k}h\|^2+\cdots+\|D_{T_{j_{k-1}}}T_{j_k}h\|^2+\|D_{T_{j_k}}h\|^2
\end{equation}
Making use of the general fact that
if $T$ is a contraction on a Hilbert space $\cH$, then $\|D_Th\|^2=\|h\|^2-\|Th\|^2$ for every $h\in\cH$, we can convert \eqref{RHSvectornorm}
to the telescoping sum
\begin{align*}
&   (\|T_{j_2}\cdots T_{j_k}h\|^2-\|T_{j_1}T_{j_2}\cdots T_{j_k}h\|^2)+(\|T_{j_3} \cdots T_{j_k}h\|^2-\|T_{j_2}T_{j_3} \cdots T_{j_k}h\|^2)  \\
& \quad \quad \quad \quad  +\cdots+(\|T_{j_k}h\|^2-\|T_{j_{k-1}}T_{j_k}h\|^2)+(\|h\|^2-\|T_{j_k}h\|^2). \\
 &  = \|h\|^2-\|T_{j_1}T_{j_2}\cdots T_{j_k}h\|^2=\|h\|^2-\|T_{\alpha}h\|^2=\|D_{T_{\alpha}}h\|^2.
\end{align*}This shows that $\Delta_\alpha$ is an isometry.

{\em Proof of Part (2).} Use the fact that $\Delta_{(j)}$ is an isometry by Part (1) to get
\begin{align*}
& \|D_{T_j}T_{(j)}h\|^2+\|\Delta_{(j)} D_{T_{(j)}}h\|^2=\|D_{T_j}T_{(j)}h\|^2+\|D_{T_{(j)}}h\|^2\\
& \quad =(\|T_{(j)}h\|^2-\|Th\|^2)+(\|h\|^2-\|T_{(j)}h\|^2)=\|D_Th\|^2.
\end{align*}

{\em Proof of Part (3).} By a similar computation as done in Part (2), one can show that the norms of the vectors $D_{T_j}T_{(j)}h\oplus \Delta_{(j)} D_{T_{(j)}}h$ and $D_{T_j}h\oplus \Delta_{(j)} D_{T_{(j)}}T_jh$ are the same for every $h$ in $\cH$.

{\em Proof of Part (4).} Denote by $\cD$ the Hilbert space $\cD_{T_1}\oplus\cD_{T_2}\oplus\cdots\oplus\cD_{T_d}$. If for each $j=1,2,\dots,d$,
\begin{align}\label{1stpt}
\nonumber&\dim(\cD\ominus\overline{\{D_{T_j}T_{(j)}h\oplus \Delta_{(j)} D_{T_{(j)}}h:h\in\cH\}})\\
&=\dim(\cD\ominus\overline{\{D_{T_j}h\oplus \Delta_{(j)} D_{T_{(j)}}T_jh:h\in\cH\}}),
\end{align}
then clearly the isometric operators $U_j^*$ defined as in (\ref{Uj*}) extend to  unitary operators on $\cD_{T_1}\oplus\cD_{T_2}\oplus\cdots\oplus\cD_{T_d}$.   Then we may define unitary operators $\tau_j$ on $\cD$ by
\begin{equation}  \label{tauj-def}
  \tau_j = U_1U_j^*.
\end{equation}
(so in particular $\tau_1 = I_\cD$).
Note next that
\begin{align*}
\tau \colon \Lambda_j  D_Th =  D_{T_j}T_{(j)}h\oplus \Delta_{(j)} D_{T_{(j)}}h\mapsto D_{T_1}T_{(1)}h\oplus \Delta_{(1)}
D_{T_{(1)}}h=\Lambda_1D_Th
\end{align*}
Hence  $\tau_j$ is a well-defined unitary operator on all of $\cD$ satisfying the intertwining relation \eqref{varphi-j}.
 If any of the equalities in (\ref{1stpt}) does not
hold, then we add an infinite dimensional Hilbert space $\cE$ to $\cD_{T_1}\oplus\cD_{T_2}\oplus\cdots\oplus\cD_{T_d}$ so that (\ref{1stpt})
does hold with $\cD$ replaced by
$$
\cF:=\cD_{T_1}\oplus\cD_{T_2}\oplus\cdots\oplus\cD_{T_d}\oplus\cE.
$$
This proves (4).
\end{proof}

\noindent
{\bf{Notation:}} For the adjoint tuple $(T_1^*,T_2^*,\dots, T_d^*)$, the symbols $\cF,\Lambda_j,U_j,\tau_j$ introduced in Lemma \ref{isometries} will
be changed to $\cF_{j*},\Lambda_{j*},U_{j*}$ and $\tau_{j*}$, respectively. In addition to this, we denote by $P_j$ and $P_{j*}$ the projections of
$\cF$ and $\cF_*$ onto $\cD_{T_j}$ and $\cD_{T_{j*}}$, respectively.

\begin{definition}
Let $d\geq 2$ and $\underline{T}=(T_1,T_2,\dots, T_d)$ be a $d$-tuple of commutative  contractions. Let $\cF,\Lambda_j,U_j$ be as in
Lemma \ref{isometries}, and for $j=1,2,\dots,d$ let $P_j$ denote the projection of $\cF$ onto $\cD_{T_j}$. Then we say that the tuple
$(\cF, \Lambda_j,P_j,U_j)_{j=1}^d$ is an {\em And\^o tuple} for $\underline{T}$.
\end{definition}

\begin{thm}\label{Thm:HalmosDil}
Let $d\geq 2$, $\underline{T}=(T_1,T_2,\dots, T_d)$ be a $d$-tuple of commutative  contractions on a Hilbert space $\cH$, $(\cF, \Lambda_j,P_j,U_j)_{j=1}^d$ be an And\^o tuple for $\underline{T}$ and $\{F_{j1},F_{j2}:j=1,2,\dots,d\}$ be the fundamental operators for $\underline{T}$. Then
\begin{enumerate}
\item For each $j=1,2\dots,d$, the pair $(F_{j1},F_{j2})$ has a joint Halmos dilation to a commutative  pair of partial isometries, viz.,
\begin{align}\label{HalmosDil}
(F_{j1},F_{j2})=\Lambda_j^*(P_j^\perp U_j^*,U_jP_j)\Lambda_j.
\end{align}
\item With the unitaries $\tau_j$ as obtained in part (5) of Lemma \ref{isometries}, there is a joint Halmos dilation for the set $\{F_{j1},F_{j2}:j=1,2,\dots,d\}$, viz.,
\begin{align}\label{JHalmosDil}
(F_{j1},F_{j2})=\Lambda_1^*(\tau_j P_j^\perp U_j^*\tau_j^*,\tau_jU_jP_j\tau_j^*)\Lambda_1, \text{ for each }j=1,2,\dots, d.
\end{align}
\end{enumerate}
\end{thm}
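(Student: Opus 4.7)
The proof is essentially a direct unwinding of the definitions, using that $\Lambda_j$ is an isometry (so $\Lambda_j^*\Lambda_j=I_{\cD_T}$), the explicit action of $U_j^*$ from \eqref{Uj*}, the telescoping-sum identities that underlie Lemma \ref{isometries}(1)-(2), and the defining relations \eqref{En-fundeqn} for the fundamental operators. There is no delicate obstacle; the only point that needs care is to keep the indexing of the Ando-tuple components consistent and to exploit the identity $T=T_jT_{(j)}$ at the right moment.

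\textbf{Step 1 (Part (1)).} Fix $j$ and $h,h'\in\cH$. I would compute the two inner products
\[
\langle P_j^\perp U_j^*\Lambda_jD_Th,\Lambda_jD_Th'\rangle\quad\text{and}\quad\langle U_jP_j\Lambda_jD_Th,\Lambda_jD_Th'\rangle.
\]
For the first, apply \eqref{Lambdaj} and then \eqref{Uj*} to get $U_j^*\Lambda_jD_Th=D_{T_j}h\oplus\Delta_{(j)}D_{T_{(j)}}T_jh$; projecting onto $\cD_{T_j}^\perp$ kills the first summand, and pairing with $\Lambda_jD_Th'=D_{T_j}T_{(j)}h'\oplus\Delta_{(j)}D_{T_{(j)}}h'$ together with the isometric property of $\Delta_{(j)}$ (Lemma \ref{isometries}(1)) collapses the expression to $\langle D_{T_{(j)}}^2T_jh,h'\rangle=\langle(T_j-T_{(j)}^*T)h,h'\rangle$, where I used $T=T_{(j)}T_j$. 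The first relation in \eqref{En-fundeqn} rewrites this as $\langle F_{j1}D_Th,D_Th'\rangle$. For the second inner product, $P_j\Lambda_jD_Th=D_{T_j}T_{(j)}h\oplus 0$, and moving $U_j$ to the other side via $U_j^*$ (as in \eqref{Uj*}) gives
\[
\langle U_jP_j\Lambda_jD_Th,\Lambda_jD_Th'\rangle=\langle D_{T_j}T_{(j)}h\oplus 0,U_j^*\Lambda_jD_Th'\rangle=\langle D_{T_j}^2T_{(j)}h,h'\rangle,
\]
which equals $\langle(T_{(j)}-T_j^*T)h,h'\rangle=\langle F_{j2}D_Th,D_Th'\rangle$ by the second line of \eqref{En-fundeqn}. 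Since $\cD_T=\overline{\operatorname{Ran}}\,D_T$ and $\Lambda_j^*\Lambda_j=I_{\cD_T}$, this yields \eqref{HalmosDil}.

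\textbf{Step 2 (partial isometry / commutativity check).} Because $U_j$ is unitary (Lemma \ref{isometries}(4)(a)) and $P_j$, $P_j^\perp$ are orthogonal projections, both $U_jP_j$ and $P_j^\perp U_j^*=(U_jP_j^\perp)^*$ are partial isometries on $\cF$. Their products vanish in either order,
\[
(P_j^\perp U_j^*)(U_jP_j)=P_j^\perp P_j=0,\qquad (U_jP_j)(P_j^\perp U_j^*)=U_jP_jP_j^\perp U_j^*=0,
\]
so the pair commutes trivially, justifying the phrase ``joint Halmos dilation to a commutative pair of partial isometries''.

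\textbf{Step 3 (Part (2)).} Use the intertwining relation \eqref{varphi-j}, which gives $\Lambda_j=\tau_j^*\Lambda_1$ and hence $\Lambda_j^*=\Lambda_1^*\tau_j$. Substituting into \eqref{HalmosDil} gives
\[
F_{j1}=\Lambda_1^*\tau_jP_j^\perp U_j^*\tau_j^*\Lambda_1,\qquad F_{j2}=\Lambda_1^*\tau_jU_jP_j\tau_j^*\Lambda_1,
\]
which is exactly \eqref{JHalmosDil}. Note that commutativity (and the partial-isometry property) is preserved under the unitary conjugation by $\tau_j$, so the joint-Halmos-dilation conclusion persists. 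The entire argument is computational; the only ``hard part'' is ensuring that when translating from the formulas \eqref{Lambdaj}-\eqref{Uj*} to inner products, one correctly identifies the telescoping cancellations in Lemma \ref{isometries} and applies the fundamental-operator identities \eqref{En-fundeqn} with the right indices.
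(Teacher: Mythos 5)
Your proof is correct and follows essentially the same route as the paper's: the same inner-product computation using \eqref{Lambdaj}, \eqref{Uj*} and the isometric property of $\Delta_{(j)}$ to reduce to the identities $D_T\Lambda_j^*P_j^\perp U_j^*\Lambda_jD_T=T_j-T_{(j)}^*T$ and $D_T\Lambda_j^*U_jP_j\Lambda_jD_T=T_{(j)}-T_j^*T$, then uniqueness of the fundamental operators, and the same substitution $\Lambda_j^*=\Lambda_1^*\tau_j$ for part (2). Your Step 2 additionally verifies explicitly that $(P_j^\perp U_j^*, U_jP_j)$ is a commuting pair of partial isometries, a point the paper leaves implicit.
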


\begin{proof}[Proof of (1):] The proof of this part uses the uniqueness of the fundamental operators. For $h,h'\in\cH$ we have
\begin{align*}
& \langle D_T\Lambda_j^*P_j^\perp U_j^*\Lambda_jD_Th,h'\rangle  \\
& \quad = \langle P_j^\perp U_j^*(D_{T_j}T_{(j)}h\oplus \Delta_{(j)}
D_{T_{(j)}}h),D_{T_j}T_{(j)}h'\oplus \Delta_{(j)} D_{T_{(j)}}h'\rangle\\
& \quad =\langle 0\oplus\Delta_{(j)} D_{T_{(j)}}T_jh,   D_{T_j}T_{(j)}h'\oplus \Delta_{(j)} D_{T_{(j)}}h'\rangle\\
& \quad =\langle D_{T_{(j)}}T_j h, D_{T_{(j)}}h'\rangle=\langle (T_j-T_{(j)}^*T)h,h'\rangle.
\end{align*}
Therefore $D_T\Lambda_j^*P_j^\perp U_j^*\Lambda_jD_T=T_j-T_{(j)}^*T$. By a similar computation one can
show that $D_T\Lambda_j^*U_jP_j\Lambda_jD_T=T_{(j)}-T_{j}^*T.$ By Theorem \ref{fund-existence}, the fundamental
operators are the unique operators satisfying these equations. Hence (\ref{HalmosDil}) follows.

{\em{Proof of (2):}} This follows from the property \eqref{varphi-j}  of the unitaries $\tau_j$:
$\tau_j\Lambda_j=\Lambda_1$, for each $j=1,2,\dots,d$. Using this in (\ref{HalmosDil}), we obtain (\ref{JHalmosDil}).
\end{proof}

\section{Non-commutative  isometric lift of tuples of commutative  contractions}  \label{S:nc-iso-lift}

As was mentioned in connection with the rational dilation problem in Section \ref{S:domains},  for $d \ge 3$
it can happen that
a ${\overline{\mathbb D}}^d$-contraction fails to have a ${\overline{\mathbb D}}^d$-isometric lift once $d \ge 3$,
unlike the case of $d=1$ and $d=2$.
Note that a ${\overline{\mathbb D}}^d$-contraction consists of a commutative $d$-tuple $\underline{T} = (T_1, \dots, T_d)$ of
contraction operators, while a ${\overline{\mathbb D}}^d$-isometry consists of a commutative $d$-tuple of isometries.
Here we show that,
even for the case of a general $d \ge 3$, a ${\overline{\mathbb D}}^d$-contraction always has a
(in general noncommutative) isometric
lift $\underline{V} = (V_1, \dots, V_d)$.

Let $T$ be a contraction on a Hilbert space $\cH$. Sch\"affer \cite{Schaffer} showed that the following $2\times2$ block operator matrix
\begin{align}\label{SchDil}
V^S=
\begin{bmatrix}
  T & 0 \\
  {\bf e}_0^*D_T & M_z
\end{bmatrix}:\cH\oplus H^2(\cH)\to \cH\oplus H^2(\cH)
\end{align}is an isometry and hence a lift of the contraction $T$. Here, ${\bf e}_0 \colon H^2(\cH) \to \cH$ is the ``evaluation at zero" map:
${\bf e}_0 \colon g \mapsto g(0)$. For a given $d$-tuple $\underline{T} = (T_1, \dots, T_d)$ of contraction operators, let $V^S_j$ be the
Sch\"affer isometric lift of $T_j$, for each $1\leq j\leq d$. Then the tuple $\underline{V}^S:=(V_1^S,V_2^S,\dots,V_d^S)$ is an
(in general noncommutative) isometric lift of $\underline{T}$.

It is of interest to develop other constructions for such possibly noncommutative isometric lifts which have more
structure and provide additional insight.
Our next goal is to provide one such construction where the (possibly noncommutative) isometric lift of the given  commutative contractive $d$-tuple $\underline{T} = (T_1, \dots, T_d)$ has the form of a (possibly noncommutative) BCL model.  The
starting point for the construction is an And\^o tuple $(\cF_*, \Lambda_{j*},P_{j*},U_{j*})_{j=1}^d$ of $\underline{T}^*$,
with the BCL model for the lift then having the form
$$
(M_{\tau_{j*}(U_{j*}P_{j*}^\perp+zU_{j*}P_{j*})\tau_{j*}^*}\oplus W_{\partial j}^*)_{j=1}^d,
$$
where $\underline{W}_\partial:=(W_{\partial1},W_{\partial2},\dots, W_{\partial d})$ is the canonical commutative unitary tuple associated
with the commutative contractive $\underline{T}$ as in \eqref{canonicalWpartial}, and where $\tau_j$ ($j=1, \dots, d$) are unitaries acting on $\cF$ as in
Part (5) of Lemma \ref{isometries}. We first need a preliminary lemma.

\begin{lemma}\label{L:inflate-ID}
Let $d\geq 2$, $(T_1,T_2,\dots, T_d)$ be a $d$-tuple of commutative  contractions and $(\cF, \Lambda_j,P_j,U_j)_{j=1}^d$ be an
And\^o tuple for $(T_1,T_2,\dots, T_d)$. Then  the  operator identities
\begin{align}
& P_j^\perp U_j^* \Lambda_j D_T+P_j U_j^* \Lambda_j D_T T=   \Lambda_j D_T T_j,  \notag  \\
&  U_j P_j  \Lambda_j D_T+U_j P_j^\perp \Lambda_j D_T T =\Lambda_j D_T T_{(j)}   \label{inflate-ID}
\end{align}
hold for $j=1, \dots, d$.
\end{lemma}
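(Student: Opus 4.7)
The plan is to verify each of the two identities in \eqref{inflate-ID} by direct computation on a generic element $D_T h$ of $\cD_T$, using only the explicit defining formulas for $\Lambda_j$ in \eqref{Lambdaj} and for $U_j^*$ in \eqref{Uj*}, together with the commutativity relation $T = T_j T_{(j)} = T_{(j)} T_j$. No further properties of the And\^o tuple beyond these definitions should be needed. In particular, although $U_j^*$ was extended to a unitary on all of $\cF$, all the vectors to which $U_j^*$ or $U_j$ is applied below will be seen to lie in $\operatorname{Ran}\Lambda_j$ or $\operatorname{Ran}(U_j^*|_{\operatorname{Ran}\Lambda_j})$ respectively, so the extension plays no role and the explicit formula \eqref{Uj*} suffices.

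For the first identity, I would begin by writing
\[
U_j^*\Lambda_j D_T h = D_{T_j} h\oplus \Delta_{(j)}D_{T_{(j)}}T_j h,\qquad U_j^*\Lambda_j D_T T h = D_{T_j}Th\oplus \Delta_{(j)}D_{T_{(j)}}T_j T h,
\]
the second being just \eqref{Uj*} applied with $Th$ in place of $h$. Since $P_j$ is the projection onto $\cD_{T_j}$, the pieces $P_j^\perp U_j^*\Lambda_j D_T h = 0\oplus \Delta_{(j)}D_{T_{(j)}}T_j h$ and $P_jU_j^*\Lambda_j D_T T h = D_{T_j}Th\oplus 0$ sit in complementary summands of $\cF$, so their sum is $D_{T_j}Th\oplus \Delta_{(j)}D_{T_{(j)}}T_j h$. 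Using $T=T_{(j)}T_j$ on the first coordinate rewrites this as $D_{T_j}T_{(j)}T_j h\oplus \Delta_{(j)}D_{T_{(j)}}T_j h = \Lambda_j D_T T_j h$, which is the right-hand side of the first identity.

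For the second identity, the same bookkeeping is run in reverse. I would first compute $P_j\Lambda_j D_T h + P_j^\perp\Lambda_j D_T T h = D_{T_j}T_{(j)}h\oplus\Delta_{(j)}D_{T_{(j)}}Th$. The key move is to recognize, using $T=T_j T_{(j)}$, that
\[
D_{T_j}T_{(j)}h\oplus\Delta_{(j)}D_{T_{(j)}}T h = D_{T_j}(T_{(j)}h)\oplus \Delta_{(j)}D_{T_{(j)}}T_j(T_{(j)}h),
\]
which by the defining formula \eqref{Uj*} (applied with $T_{(j)}h$ in place of $h$) equals $U_j^*\Lambda_j D_T T_{(j)}h$. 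Applying $U_j$ to both sides then yields $U_j(P_j\Lambda_j D_T h + P_j^\perp\Lambda_j D_T T h)=\Lambda_j D_T T_{(j)}h$, as required.

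The only real obstacle is the bookkeeping: one must correctly identify, in the second identity, the vector $D_{T_j}T_{(j)}h\oplus\Delta_{(j)}D_{T_{(j)}}Th$ as an element of the form $D_{T_j}k\oplus \Delta_{(j)}D_{T_{(j)}}T_j k$ with $k = T_{(j)}h$, so that the defining formula \eqref{Uj*} for $U_j^*$ can be inverted. The commutativity of the tuple, in the form $T=T_jT_{(j)}=T_{(j)}T_j$, is precisely the ingredient that lets this identification go through and that makes the telescoping between the two slots of $\Lambda_j D_T$ succeed.
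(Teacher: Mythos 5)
Your proof is correct and follows essentially the same route as the paper's: both verify the identities by direct computation on $D_Th$ using the defining formulas \eqref{Lambdaj} and \eqref{Uj*} together with $T=T_jT_{(j)}=T_{(j)}T_j$. The paper writes out only the first identity and declares the second ``similar''; your explicit handling of the second one --- recognizing the sum as $U_j^*\Lambda_j D_T T_{(j)}h$ and then applying $U_j$ --- is the natural way to carry out that omitted step, and your remark that the unitary extension of $U_j^*$ plays no role is accurate.
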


\begin{proof}
  Let $j$ be some integer between $1$ and $d$, and $h$ be in $\cH.$ Then
  \begin{align*}
  & P_j^\perp U_j^*\Lambda_j D_Th+P_jU_j^*\Lambda_jD_T T h\\
  & = P_j^\perp U_j^*(D_{T_j}T_{(j)}h\oplus \Delta_{(j)} D_{T_{(j)}}h)+P_jU_j^*(D_{T_j}T_{(j)}Th\oplus \Delta_{(j)} D_{T_{(j)}}Th)\\
  & = P_j^\perp(D_{T_j}h\oplus \Delta_{(j)} D_{T_{(j)}}T_jh)+P_j(D_{T_j}Th\oplus \Delta_{(j)} D_{T_{(j)}}T_jTh)\\
  & = (0\oplus \Delta_{(j)} D_{T_{(j)}}T_jh) + (D_{T_j}Th\oplus 0)=D_{T_j}T_{(j)}T_jh\oplus \Delta_{(j)} D_{T_{(j)}}T_jh \\
  & =\Lambda_jD_T T_j h.
  \end{align*}
  The proof of the second equality in \eqref{inflate-ID}  is similar to that of the first one.
\end{proof}

\begin{thm}\label{Thm:noncomIsolift}
Let $\underline{T}=(T_1,T_2,\dots,T_d)$ be a $d$-tuple of commutative contractions, $\underline{W}_\partial:=(W_{\partial1}^*,W_{\partial2}^*,\dots, W_{\partial d}^*)$ be the canonical commutative unitary tuple associated with $\underline{T}$ as in \eqref{canonicalWpartial}, let $(\cF_*,\Lambda_{j*},
let P_{j*},U_{j*})_{j=1}^d$ be an And\^o tuple for $\underline{T}^*$, and let $T = T_1T_2\cdots T_d$. Then
\begin{enumerate}

\item For each $1\leq j \leq d$, define the isometries $\Pi_{j*}:\cH\to H^2(\cF_*)\oplus\cR_D$ as
\begin{align}
\Pi_{j*}h=(I_{H^2}\otimes \Lambda_{j*})\widehat{\cO}_{D_{T^*},T^*}h\oplus Qh.\label{Pi-j*}
\end{align}
Then the  identities
\begin{align}\label{AbstrctDil}
&  \Pi_{j*}T_{j}^*=(M_{U_{j*}P_{j*}^\perp+zU_{j*}P_{j*}}^*\oplus W_{\partial j}^*)\Pi_{j*} \notag \\
&  \Pi_{j*}T_{(j)}^*=(M^*_{P_{j*}U_{j*}^*+zP_{j*}^\perp U_{j*}^*}\oplus W_{(\partial j)}^*)\Pi_{j*}
\end{align}
hold for $1 \le j \le d$,
i.e. for each $j=1,2,\dots,d$,
\begin{align}\label{Dil-pair}
\big(\Pi_{j*},M_{U_{j*}P_{j*}^\perp+zU_{j*}P_{j*}}\oplus W_{\partial j},M_{P_{j*}U_{j*}^*+zP_{j*}^\perp U_{j*}^*}\oplus W_{(\partial j)},M_z\oplus W_D\big)
\end{align}
is an isometric lift of $(T_j,T_{(j)},T)$.

\item With the unitaries $\tau_{j*}$ obtained as in part (4) of Lemma \ref{isometries} applied to $(T_1^*,T_2^*,\dots,T_d^*)$, we have for each $j=1,2,\dots,d$,
\begin{align}\label{Dil}
\begin{cases}
  \Pi_{1*}T_j^*=(M^*_{\tau_{j*}(U_{j*}P_{j*}^\perp+zU_{j*}P_{j*})\tau_{j*}^*}\oplus W_{\partial j}^*)\Pi_{1*}\\
  \Pi_{1*}T_{(j)}^*=(M^*_{\tau_{j*}(P_{j*}U_{j*}^*+zP_{j*}^\perp U_{j*}^*)\tau_{j*}^*}\oplus W_{(\partial j)}^*)\Pi_{1*},
  \end{cases}
\end{align}
i.e. if we denote the projections $\tau_{j*}P_{j*}\tau_{j*}^*$ and unitaries $\tau_{j*}U_{j*}\tau_{j*}^*$  by $P_{j*}'$ and $U_{j*}'$, respectively, then the
$d$-tuple of (in general non-commutative) isometries
\begin{align}\label{Dil-tuple}
(M_{(U_{j*}'P_{j*}'^\perp+zU_{j*}'P_{j*}')}\oplus W_{\partial j})_{j=1}^d
\end{align}
is a lift of $(T_1,T_2,\dots,T_d)$ via the embedding $\Pi_{1*}:\cH\to H^2(\cF_*)\oplus\cR_D$.
\end{enumerate}
\end{thm}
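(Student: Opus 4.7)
The plan is to verify the identities \eqref{AbstrctDil} in part (1) directly, then derive part (2) by a conjugation argument using the unitaries $\tau_{j*}$.

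I first check that $\Pi_{j*}$ as defined in \eqref{Pi-j*} is isometric: since $\Lambda_{j*}$ is isometric by Lemma \ref{isometries}(2), the ampliation $I_{H^2}\otimes\Lambda_{j*}$ is isometric on $H^2(\cD_{T^*})$, and combining this with the standard Douglas isometric embedding $h\mapsto \widehat{\cO}_{D_{T^*},T^*}h\oplus Qh$ (see \eqref{Pi-D} and the discussion there) yields $\|\Pi_{j*}h\|^2=\|h\|^2$. Next, I unpack the first identity in \eqref{AbstrctDil}. Writing $A=U_{j*}P_{j*}^\perp$ and $B=U_{j*}P_{j*}$, a direct basis computation gives $M_{A+zB}^*\big(\sum_n z^n f_n\big)=\sum_n z^n (A^* f_n+B^* f_{n+1})$ on $H^2(\cF_*)$. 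Substituting $f_n=\Lambda_{j*}D_{T^*}T^{*n}h$, the coefficient becomes
\[
P_{j*}^\perp U_{j*}^*\Lambda_{j*}D_{T^*}T^{*n}h+P_{j*}U_{j*}^*\Lambda_{j*}D_{T^*}T^*\,T^{*n}h,
\]
which by Lemma \ref{L:inflate-ID} applied to the adjoint tuple $\underline{T}^*$ with And\^o tuple $(\cF_*,\Lambda_{j*},P_{j*},U_{j*})$ collapses to $\Lambda_{j*}D_{T^*}T_j^*T^{*n}h=\Lambda_{j*}D_{T^*}T^{*n}T_j^*h$, using commutativity of $\underline{T}$. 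Summing in $n$ recovers $(I_{H^2}\otimes\Lambda_{j*})\widehat{\cO}_{D_{T^*},T^*}T_j^*h$, which is the first component of $\Pi_{j*}T_j^*h$; the $\cR_D$-component matches because $W_{\partial j}^*Q=QT_j^*$ by \eqref{TheXi}. The second identity in \eqref{AbstrctDil} is proved in exactly the same fashion using the second equation in \eqref{inflate-ID}.

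To finish part (1), I check the product factorization. A short expansion shows
\[
(U_{j*}P_{j*}^\perp+zU_{j*}P_{j*})(P_{j*}U_{j*}^*+zP_{j*}^\perp U_{j*}^*)=zU_{j*}(P_{j*}^\perp+P_{j*})U_{j*}^*=zI_{\cF_*},
\]
so the product of the two multiplier symbols is $zI_{\cF_*}$ and the product of the unitaries on $\cR_D$ is $W_{\partial j}W_{(\partial j)}=W_D$ by \eqref{prodUni}. Thus the product of the two intertwiners in \eqref{Dil-pair} recovers the Douglas lift $(\Pi_D, M_z\oplus W_D)$ of $T$, making $(\Pi_{j*}, M_{U_{j*}P_{j*}^\perp+zU_{j*}P_{j*}}\oplus W_{\partial j},\,M_{P_{j*}U_{j*}^*+zP_{j*}^\perp U_{j*}^*}\oplus W_{(\partial j)},\,M_z\oplus W_D)$ an isometric lift of the commuting triple $(T_j,T_{(j)},T)$, as claimed.

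For part (2), the key observation is that $\tau_{j*}\Lambda_{j*}=\Lambda_{1*}$ by Lemma \ref{isometries}(4b), so if we set $\Phi_{j*}:=(I_{H^2}\otimes\tau_{j*})\oplus I_{\cR_D}$, then $\Phi_{j*}$ is a unitary on $H^2(\cF_*)\oplus\cR_D$ with $\Phi_{j*}\Pi_{j*}=\Pi_{1*}$. Conjugating the first intertwining identity in \eqref{AbstrctDil} by $\Phi_{j*}$ gives
\[
\Pi_{1*}T_j^*=\Phi_{j*}(M_{U_{j*}P_{j*}^\perp+zU_{j*}P_{j*}}^*\oplus W_{\partial j}^*)\Phi_{j*}^*\Pi_{1*},
\]
and since $(I\otimes\tau_{j*})M_{A+zB}(I\otimes\tau_{j*}^*)=M_{\tau_{j*}(A+zB)\tau_{j*}^*}$ and $W_{\partial j}$ acts on $\cR_D$ untouched by $\tau_{j*}$, the right-hand side simplifies to the one in \eqref{Dil}. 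The same argument with the second identity in \eqref{AbstrctDil} yields the second line of \eqref{Dil}. Running over all $j=1,\dots,d$ then exhibits $(M_{U_{j*}'P_{j*}'^\perp+zU_{j*}'P_{j*}'}\oplus W_{\partial j})_{j=1}^d$ as an isometric lift of $\underline{T}$ through the single isometric embedding $\Pi_{1*}$.

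The main technical point is step (2) of the first paragraph: correctly computing how $M_{U_{j*}P_{j*}^\perp+zU_{j*}P_{j*}}^*$ shifts Fourier coefficients and then recognising that the resulting expression is exactly what Lemma \ref{L:inflate-ID} (applied to $\underline{T}^*$) asserts. Once that bookkeeping is done, everything else, including the product identity and the passage to a common embedding $\Pi_{1*}$ in part (2), is a straightforward conjugation.
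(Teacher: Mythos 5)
Your proposal is correct and follows essentially the same route as the paper: the same coefficient-by-coefficient computation of $M_{U_{j*}P_{j*}^\perp+zU_{j*}P_{j*}}^*$ acting on $\Pi_{j*}h$, collapsed via Lemma \ref{L:inflate-ID} applied to the adjoint tuple, with the $\cR_D$-component handled by $W_{\partial j}^*Q=QT_j^*$, and part (2) obtained by conjugating with $(I_{H^2}\otimes\tau_{j*})\oplus I_{\cR_D}$. Your added checks (isometry of $\Pi_{j*}$ and the product factorization of the two multiplier symbols to $zI_{\cF_*}$) are correct and slightly more explicit than the paper, but do not change the argument.
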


\begin{proof}[Proof of part (1):] For every $h\in\cH$, we have for each $j=1,2,\dots, d$,
\begin{align*}
&(M_{U_{j*}P_{j*}^\perp+zU_{j*}P_{j*}}^*\oplus W_{\partial j}^*)\Pi_{j*}h\\
&=\sum_{n\geq 0}z^nP_{j*}^\perp U_{j*}^*\Lambda_{j*}D_{T^*}T^{* n}h+\sum_{n\geq 0}z^nP_{j*} U_{j*}^*\Lambda_{j*}D_{T^*}T^{* n+1}h\oplus W_{\partial j}^*Qh\\
&=\sum_{n\geq 0}z^n(P_{j*}^\perp U_{j*}^*\Lambda_{j*}D_{T^*}+P_{j*} U_{j*}^*\Lambda_{j*}D_{T^*}T^*)T^{* n}h\oplus QT_j^*h\\
&=\sum_{n\geq 0}z^n\Lambda_{j*}D_{T^*}T_j^*T^{* n}h\oplus QT_j^*h=\Pi_{j*}T_j^*h
\end{align*}
 where we use the first equation in \eqref{inflate-ID}) for the last step.

A similar computation using the second equation in (\ref{inflate-ID}) leads to the second equation in \eqref{AbstrctDil}.

{\em{Proof of part (2):}} It  follows from property  (\ref{varphi-j}) of $\tau_j$ and the definition (\ref{Pi-j*}) of $\Pi_{j*}$ that
$$((I_{H^2}\otimes \tau_{j*})\oplus I_{\cR})\Pi_{j*}=\Pi_{1*}, \text{ for each }j=1,2,\dots,d.$$ Using this in (\ref{AbstrctDil}) one obtains (\ref{Dil}).
\end{proof}

\begin{remark}
Note that for a given $d$-tuple $\underline{T}=(T_1,T_2,\dots,T_d)$ of commutative contractions, if there exists an
And\^o tuple of $\underline{T}^*$ such that the $d$-tuple of isometries given in (\ref{Dil-tuple}) is commutative, then
there exists a commutative isometric lift of $\underline{T}$. Therefore a priori, we have a sufficient condition for dilation in
$\overline{\mathbb{D}}^d$.
\end{remark}

\begin{remark} \label{R:Douglas-model}  Note that in the terminology of Definition \ref{D:terminology}, the context of part (2) of
Theorem \ref{Thm:noncomIsolift} the collection of objects $(\Pi_D, H^2(\cF_*) \oplus \cR_D, \underline{V})$, where we set
$\underline{V} = (V_1, \dots, V_d)$ with
\begin{equation}  \label{Vj}
   V_j = M_{U'_{j*} P^{\prime \perp}_{j*} + z U_{j*} P'_{j*}}\oplus W_{\partial j} \text{ for } j = 1, \dots, d
\end{equation}
is a (not necessarily commutative) isometric lift for of $(T_1, \dots, T_d)$, where the construction involves only an
And\^o tuple $(\cF_*, \Lambda_{j*}, P_{j*}, U_{j*})_{j=1}^d$ for $\underline{T} = (T_1, \dots, T_d)$.   Note also that the presentation
\eqref{Vj} shows that $(V_1, \dots, V_d)$ is just the (not necessarily commutative) BCL-model associated with the BCL-tuple
$(\cF_*, \cR_D, P_{*j}, U_{*j}, W_{\partial j})_{j=1}^d$ as in Definition \ref{D:BCLtuple}.
\end{remark}

\section{Pseudo-commutative contractive lifts and models for tuples of commutative contractions}  \label{S:pcc}
One disadvantage of dilation theory in $\overline{\mathbb{D}}^d$ ($d\geq2$) is that there is no uniqueness of minimal isometric lifts when such
exist, even in the case $d=2$ (where at least we know such exist)---unlike the classical case $d=1$.  We next identify an alternative generalization
of the notion of isometric lift for the $d=1$ case, which, as we shall see, always exists and has good uniqueness properties.

\begin{definition}\label{D:pcc}
For a given $d$-tuple $\underline{T}=(T_1,T_2,\dots,T_d)$ of commutative contractions acting on a Hilbert space $\cH$, we say that $(\Pi,\mathcal{K}, \underline{S},V)$ is a {\em pseudo-commutative contractive lift} of $(T_1,T_2,\dots,T_d,T)$, where $\underline{S}=(S_1,S_2,\dots,S_d)$ and
$T =T_1T_2\cdots T_d$, if
\begin{enumerate}
  \item $\Pi:\mathcal{H}\to\mathcal{K}$ is an isometry such that $(\Pi,\cK,V)$ is the minimal isometric lift of the single operator $T$, and
  \item with $S_j'=S_{j}^*V$ for each $1\leq j \leq d$, the pairs $(S_j,V)$, $(S_j',V)$ are commutative and $$(S_j^*,S_j'^*)\Pi=\Pi (T_j^*,T_{(j)}^*).$$
\end{enumerate}
\end{definition}

\begin{remark}\label{R:pcc}
 Note that we do not assume that the tuples $\underline{S}=(S_1,S_2,\dots,S_d)$ and $\underline{S}'=(S_1',S_2',\dots,S_d')$ be commutative but we do require that each of the pairs $(S_j,V)$ and $(S_j',V)$ be commutative. Also  one can show that the validity of the equation $S_j'=S_{j}^*V$
 implies the validity of the equation $S_j=S_j'^*V$ for each $j=1,2,\dots,d$ as follows:
\begin{align*}
S_j' = S_j^* V \Rightarrow S_j^{\prime *}  =  V^* S_j \Rightarrow  S_j^{\prime *} V = V^* S_j V = V^* V S_j  = S_j.
\end{align*}

Suppose that $(V_1,V_2,\dots,V_d)$ on $\cK$ is a commutative isometric lift of a given $d$ tuple of commutative contractions $\underline{T}=(T_1,T_2,\dots,T_d)$ on $\cH$ via an isometric embedding $\Pi:\cH\to\cK$. Let us denote by $V$ the isometry $V_1V_2\cdots V_d$. Then with $V_j':=V_1\cdots V_{j-1}V_{j+1}\cdots V_d$ for $j=1,2,\dots,d$, we see that part (2) in Definition \ref{D:pcc} is satisfied. However, the lift $(\Pi,\cK,V)$ of $T=T_1T_2\cdots,T_d$ need not be minimal, i.e. condition (1) in Definition \ref{D:pcc} may not hold.
\end{remark}

The next theorem shows that for a given tuple $\underline{T}$ of commutative contractions, a pseudo-commutative contractive lift exists and any two such lifts are unitarily equivalent in a sense explained in the theorem.

\begin{thm}\label{Thm:U-T}
Let $\underline{T}=(T_1,T_2,\dots,T_d)$ be a $d$-tuple of commutative  contractions acting on a Hilbert space $\cH$ and
 let $T =T_1T_2\cdots T_d$.
Then there exists a pseudo-commutative contractive lift of $(T_1,T_2,\dots,T_d,T)$. Moreover,
if $(\Pi_1,\mathcal{K}_1, \underline{S},V_1)$ and
$(\Pi_2,\mathcal{K}_2, \underline{R},V_2)$ are two pseudo-commutative contractive lifts of $(T_1,T_2,\dots,T_d,T)$,
where $\underline{S}=(S_1,S_2,\dots,S_d)$ and $\underline{R}=(R_1,R_2,\dots,R_d)$, then $(\Pi_1,\mathcal{K}_1, \underline{S},V_1)$
and $(\Pi_2,\mathcal{K}_2, \underline{R},V_2)$ are unitarily equivalent in the sense of Definition \ref{D:terminology}.
\end{thm}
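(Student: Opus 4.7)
The plan has two parts. For existence, I would explicitly construct a pseudo-commutative contractive lift on Douglas's minimal isometric lift space of the product $T$, using a Berger--Coburn--Lebow-style multiplication symbol built from the fundamental operators of $\underline{T}^*$; for uniqueness, I would run a cyclicity/orthogonality argument that uses the two intertwining conditions together with commutativity of $(S_j,V)$.

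For existence, take $(\Pi,\cK,V):=(\Pi_D,\cK_D,V_D)$ as in Section~\ref{S:Douglas}, with $\cK_D=H^2(\cD_{T^*})\oplus\cR_D$ and $V_D=M_z\oplus W_D$, and define
\[
S_j:=M_{G_{j1}^*+zG_{j2}}\oplus W_{\partial j},\qquad j=1,\ldots,d,
\]
where $G_{j1},G_{j2}$ are the fundamental operators of the adjoint tuple $\underline{T}^*$. This formula is exactly what arises from compressing the noncommutative BCL-form lift of Theorem~\ref{Thm:noncomIsolift} through the isometric embedding $J:=(I_{H^2}\otimes\Lambda_{1*})\oplus I_{\cR_D}$ that identifies $\cK_D$ with the minimal $V$-cyclic subspace of $\Pi_{1*}\cH$; using $\tau_{j*}\Lambda_{j*}=\Lambda_{1*}$ from Lemma~\ref{isometries}(4) together with the joint Halmos dilation of Theorem~\ref{Thm:HalmosDil}, one checks $J^*V_jJ=M_{G_{j1}^*+zG_{j2}}\oplus W_{\partial j}$. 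Commutativity of $(S_j,V_D)$ is then immediate, since $M_z$ commutes with any multiplication operator and $W_{\partial j}$ commutes with $W_D=W_{\partial 1}\cdots W_{\partial d}$. A short coefficient-matching calculation exploiting that the symbol is linear in $z$ yields $S_j^*V_D=M_{G_{j2}^*+zG_{j1}}\oplus W_{(\partial j)}$, which makes $S_j'$ itself a multiplication-plus-unitary block operator and gives commutativity of $(S_j',V_D)$ for free. The intertwining identities $S_j^*\Pi_D=\Pi_DT_j^*$ and $(S_j')^*\Pi_D=\Pi_DT_{(j)}^*$ then reduce, coefficient by coefficient on the $H^2$-factor, to the defining relations~\eqref{definingFs*} for $G_{j1}$ and $G_{j2}$ applied term-wise to the Douglas observability series, and on the $\cR_D$-factor to the identities $W_{\partial j}^*Q=QT_j^*$ and $W_{(\partial j)}^*Q=QT_{(j)}^*$.

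For uniqueness, the classical uniqueness of the minimal isometric lift of the single contraction $T$ supplies a unitary $\tau\colon\cK_1\to\cK_2$ with $\tau V_1=V_2\tau$ and $\tau\Pi_1=\Pi_2$. After replacing $\underline{R}$ by the transported tuple $\tau^*R_j\tau$, it suffices to show: whenever $(\Pi,\cK,\underline{S},V)$ and $(\Pi,\cK,\underline{R},V)$ are pseudo-commutative contractive lifts over the same triple $(\Pi,\cK,V)$, then $S_j=R_j$. Setting $Z_j:=S_j-R_j$, the hypotheses deliver $Z_jV=VZ_j$, $Z_j^*\Pi=0$, and $V^*Z_j\Pi=0$. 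I would then show $\langle Z_jV^n\Pi h,V^m\Pi h'\rangle=0$ for all $n,m\ge 0$ and $h,h'\in\cH$ by sliding $V^n$ past $Z_j$ and splitting into two cases: for $m\le n$ use $V^{*(n-m)}\Pi=\Pi T^{*(n-m)}$ together with $Z_j^*\Pi=0$; for $m>n$ use $Z_j\Pi h\in\ker V^*=\operatorname{Ran}(V)^\perp$ while $V^{m-n}\Pi h'\in\operatorname{Ran}(V)$. Minimality $\cK=\overline{\bigvee_n V^n\Pi\cH}$ then forces $Z_j=0$ and hence $\tau S_j=R_j\tau$.

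The main obstacle will be locating and justifying the correct linear-in-$z$ symbol $G_{j1}^*+zG_{j2}$ for the $H^2$-factor of $S_j$: once the formula is in hand, the intertwining identities verify coefficient-by-coefficient from \eqref{definingFs*}, the two commutativity statements reduce to generalities about multiplication operators and commuting unitaries, and uniqueness becomes the routine cyclicity argument above. The guidance from compressing the Theorem~\ref{Thm:noncomIsolift} lift down to $\cK_D$ via $J$ (together with Lemma~\ref{isometries} and Theorem~\ref{Thm:HalmosDil}) is precisely what makes the symbol natural rather than ad hoc.
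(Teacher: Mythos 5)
Your proposal is correct. The existence half is essentially the paper's argument: the same Douglas lift space $\cK_D=H^2(\cD_{T^*})\oplus\cR_D$, the same operators $S_j^D=M_{G_{j1}^*+zG_{j2}}\oplus W_{\partial j}$ obtained by compressing the BCL-form lift of Theorem \ref{Thm:noncomIsolift} through $(I_{H^2}\otimes\Lambda_{j*})\oplus I_{\cR_D}$ (your direct coefficient-by-coefficient verification of the intertwinings from \eqref{definingFs*} is an equivalent shortcut, since the $z^n$-coefficient identity is exactly $G_{j1}D_{T^*}+G_{j2}^*D_{T^*}T^*=D_{T^*}T_j^*$ applied to $T^{*n}h$). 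Your uniqueness argument, however, is genuinely different from the paper's. The paper reduces to $(\Pi,V)=(\Pi_D,V_D)$ and then runs a structural block-matrix analysis: it kills the off-diagonal entries of $S_j$ using the Sz.-Nagy--Foias fact that an operator intertwining a unitary with a pure isometry vanishes, identifies the $(1,1)$-entry as a multiplication operator with a degree-one pencil symbol, identifies the $(2,2)$-entry with $W_{\partial j}$ by density of $\{W_D^n\xi\}$, and finally pins the pencil coefficients to the fundamental operators via the uniqueness clause of Theorem \ref{fund-existence} and an observability-operator computation. You instead set $Z_j=S_j-R_j$ and observe the three identities $Z_jV=VZ_j$, $\Pi^*Z_j=0$ and $V^*Z_j\Pi=0$ (the last from $S_j'^*=V^*S_j$ and $S_j'^*\Pi=\Pi T_{(j)}^*=R_j'^*\Pi$), then kill $\langle Z_jV^n\Pi h,V^m\Pi h'\rangle$ by the two-case cyclicity argument and invoke minimality of the lift of $T$; I checked both cases and they are sound. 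Your route is shorter and more elementary --- it proves Corollary \ref{Uni-Cor} directly and needs none of the intertwining/commutant-of-$M_z$ machinery --- while the paper's route yields as a byproduct the explicit canonical form of an arbitrary pseudo-commutative contractive lift over the Douglas space, which it reuses later (Remark \ref{R:Dmodel}, Theorem \ref{Thm:CompUniInv}). Note, though, that your argument combined with the explicit existence construction recovers that canonical form anyway, since the implementing unitary is the canonical one between minimal isometric lifts of $T$.
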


\begin{proof}[Proof of Existence]
Roughly the idea is that a pseudo-commutative contractive lift of $\underline{T} = (T_1, \dots, T_d)$ arises as the compression of the And\^o-tuple-based
noncommutative isometric lift constructed in Theorem \ref{Thm:noncomIsolift} to the minimal lift space for the single contraction operator
$T = T_1 \cdots T_d$.  Precise details area as follows.

We use the Douglas model for the minimal isometric lift of the single contraction operator $T = T_1 \cdots, T_d$ as described in Section
 \ref{S:Douglas}, namely
 $$
  (H^2(\cD_{T^*}) \oplus \cR_D, \Pi_D, V_D = M_z \oplus W_D)
 $$
 as in \eqref{Pi-D}, \eqref{DougInt}, \eqref{Vd}.  We let $(W_{\partial 1}, \dots, W_{\partial d})$ be the canonical unitary tuple associated with
 $(T_1, \dots, T_d)$ as in \eqref{canonicalWpartial}, and let $\{G_{i1}, G_{i2} \colon i = 1, \dots d\}$ be the fundamental operators associated with
 $\underline{T}^* = (T_1^*, \dots, T_d^*)$ as in \eqref{En-fundeqn*}.
 Set
 \begin{align}\label{Dmodelops}
\nonumber\underline{S}^{D}&:=(S_1^{D},S_2^{D},\dots,S_d^{D})\\&:=(M_{G_{11}^*+zG_{12}}\oplus
W_{\partial 1},M_{G_{21}^*+zG_{22}}\oplus W_{\partial 2},\dots,M_{G_{d1}^*+zG_{d2}}\oplus W_{\partial d})
\end{align}
and
\begin{align}\label{Dmodelops'}
\nonumber\underline{S'}^{D}&:=(S_1'^{D},S_2'^{D},\dots,S_d'^{D})\\&:=(M_{G_{12}^*+zG_{11}}\oplus W_{(\partial 1)},M_{G_{22}^*+zG_{21}}
\oplus W_{(\partial 2)}\dots,M_{G_{d2}^*+zG_{d1}}\oplus W_{(\partial d)}).
\end{align}
We shall show that
\begin{equation}  \label{Dpcc}
( \Pi_D, \cK_D = H^2(\cD_T) \oplus \cR_D, \underline{S}^D, M_z \oplus W_D)
\end{equation}
is a pseudo-commutative contractive lift of $\bT = (T_1, \dots, T_d)$.

 Toward this goal let us first note that part (1) of Theorem \ref{Thm:HalmosDil} for the $d$ tuple $\underline{T}^*$, we get
\begin{align}\label{GsHalmosDil}
(G_{j1},G_{j2})=\Lambda_{j*}^*(P_{j*}^\perp U_{j*}^*,U_{j*}P_{j*})\Lambda_{j*} \text{ for each }j=1,2,\dots, d,
\end{align}
where $(\cF_{*},\Lambda_{j*},P_{j*},U_{j*})_{j=1}^d$ is an And\^o tuple for $\underline{T}^*$.
We now recall the construction of a noncommutative isometric lift described in Theorem \ref{Thm:noncomIsolift}. Notice that the isometries
$\Pi_{j*}:\cH\to H^2(\cF_*)\oplus\cR_D$
as in (\ref{Pi-j*}) can be factored as
\begin{equation} \label{Pij*}
\Pi_{j*}h=\begin{bmatrix}
            (I_{H^2}\otimes \Lambda_{j*}) & 0 \\
            0 & I_{\cR_D}
          \end{bmatrix}\begin{bmatrix}
                         \widehat \cO_{D_{T^*}, T^*}(z) h \\
                         Qh
                       \end{bmatrix}=\begin{bmatrix}
            (I_{H^2}\otimes \Lambda_{j*}) & 0 \\
            0 & I_{\cR_D}
          \end{bmatrix}\Pi_Dh.
\end{equation}
Therefore from the first equation in (\ref{AbstrctDil}) we get for each $j=1,2,\dots,d$,
\begin{align*}
\Pi_D T_j^*h&=\begin{bmatrix}
            (I_{H^2}\otimes \Lambda_{j*}^*) & 0 \\
            0 & I_{\cR_D}
          \end{bmatrix}\begin{bmatrix}
                           M_{U_{j*}P_{j*}^\perp+zU_{j*}P_{j*}}^* & 0 \\
                           0 & W_{\partial j}^*
                         \end{bmatrix}
           \begin{bmatrix}
            (I_{H^2}\otimes \Lambda_{j*}) & 0 \\
            0 & I_{\cR_D}
          \end{bmatrix}\Pi_Dh\\
&=\begin{bmatrix}
     M_{\Lambda_{j*}^*U_{j*}P_{j*}^\perp\Lambda_{j*}+z\Lambda_{j*}^*U_{j*}P_{j*}\Lambda_{j*}}^* & 0 \\
    0 &  W_{\partial j}^*
  \end{bmatrix}\Pi_Dh\\
&=\begin{bmatrix}
    M_{G_{j1}^*+zG_{j2}}^* & 0 \\
    0 & W_{\partial j}^*
  \end{bmatrix}\Pi_Dh.
\end{align*}
Consequently, we have for each $j=1,2,\dots, d$,
\begin{align}\label{semiDdil}
& \Pi_DT_j^*=(M_{G_{j1}^*+zG_{j2}}^*\oplus W_{\partial j}^*)\Pi_D = S_j^{D *} \Pi_D h
\end{align}
Similarly starting with the second equation in (\ref{AbstrctDil}) and proceeding as above we obtain
\begin{align}\label{semiDdil1}
\Pi_DT_{(i)}^*=(M_{G_{i2}^*+zG_{i1}}^*\oplus W_{(\partial i)}^*)\Pi_D = S^{\prime D *}_i \Pi_D.
\end{align}
Then with $V_D=M_z\oplus W_D$, the Douglas isometric lift of $T=T_1T_2\cdots T_d$ as discussed in Section \ref{S:Douglas}, it follows
from the equality (see (\ref{prodUni}))
$$
W_D=W_{\partial 1}W_{\partial 2}\cdots W_{\partial d}
$$
that $S_j'^{D}=S_j^{D *}V_{D}$ for each $j=1,2,\dots, d$. As we have already noted,
$(\Pi,\cK_D,V_D)$ is a minimal isometric lift of $T$. Therefore part (1) of Definition \ref{D:pcc} is satisfied.
Also it follows from definitions (\ref{Dmodelops}) and (\ref{Dmodelops'}) that the pairs $(S_j,V_D)$ and $(S_j',V_D)$ are commutative
for each $j=1,2,\dots,d$. And finally from equations (\ref{semiDdil}) and (\ref{semiDdil1}) we see that part (2) of Definition \ref{D:pcc} is also
satisfied. Consequently,  \eqref{Dpcc} is a pseudo-commutative contractive lift of $(T_1,T_2,\dots,T_d,T)$.
\end{proof}

\begin{proof}[Proof of Uniqueness in Theorem \ref{Thm:U-T}]
The strategy is to show that any pseu\-do-commut\-ative contractive lift $(\Pi,\mathcal{K}, \underline{S},V)$
is unitarily equivalent to
the canoni\-cal-model pseudo-commutative contractive lift $(\Pi_D,\mathcal{K}_D, \underline{S}^D,V_D)$,
as constructed in the existence part
of the proof, Since $(\Pi,V)$ and $(\Pi_{D},V_{D})$ are two minimal
isometric dilations of $T=T_1T_2\cdots T_d$, there exists a unitary $\tau :\cK\to\cK_{D}$ such that
$\tau V=V_{D}\tau  \text{ and }\tau \Pi=\Pi_{D}.$
We show that this unitary does the rest of the job.

Without loss of generality we may assume that $(\Pi,V)=(\Pi_{D},V_{D})$. Due to this reduction all we have to show is that $\underline{S}=\underline{S}^{D}$ and $\underline{S}'=\underline{S}'^{D}$. First let us suppose
\begin{align}\label{Sj&'}
 S_j=\begin{bmatrix}
       A_j & B_j  \\
       C_j & D_j
      \end{bmatrix} \text{ and }
      S_j'=\begin{bmatrix}
       A_j' & B_j'  \\
       C_j' & D_j'
      \end{bmatrix}
\end{align}
for each $j=1,2,\dots, d$ with respect to the decomposition $\cK_{D}=H^2(\cD_{T^*})\oplus\cR_D$.
Since each $S_j$ commutes with $V_D=M_z\oplus W_D$, we have
\begin{align}\label{intpure&uni}
\nonumber&\begin{bmatrix}
       A_j & B_j  \\
       C_j & D_j
      \end{bmatrix}\begin{bmatrix}
       M_z & 0  \\
       0 & W_D
      \end{bmatrix} = \begin{bmatrix}
       M_z & 0  \\
       0 & W_D
      \end{bmatrix} \begin{bmatrix}
       A_j & B_j  \\
       C_j & D_j
      \end{bmatrix} \\
      \Leftrightarrow&
      \begin{bmatrix}
       A_jM_z & B_j W_D\\
       C_jM_z & D_jW_D
      \end{bmatrix} =
      \begin{bmatrix}
       M_zA_j & M_zB_j  \\
       W_DC_j & W_DD_j
      \end{bmatrix}.
\end{align}
It is well-known that any operator that intertwines a unitary and a pure isometry is zero
(see e.g.\  \cite[page 227]{NFintertwine} or \cite[Chapter 3]{BS-Memoir}),
hence $B_j = 0$ for $j=1,2,\dots, d$.
Since each $S_j'$ commutes with $V$ also, by a similar computation with $S_j'$, we have $B_j'=0$ for each $j=1,2,\dots, d$.
From the identity of (1,1)-entries in \eqref{intpure&uni} we see that
\begin{equation}   \label{multiplier}
A_j=M_{\varphi_j} \text{ and }A_j'=M_{\varphi_j'}, \text{ for some }\varphi_j,\varphi_j'\in H^\infty(\cB(\cD_{T^*})).
\end{equation}
Hence $S_j$ and $S_j'$ have the form
\begin{equation}   \label{Sjs}
S_j = \begin{bmatrix} M_{\varphi_j} & 0 \\ C_j & D_j \end{bmatrix}, \quad S'_j = \begin{bmatrix}  M_{\varphi'_j} & 0 \\ C'_j & D'_j \end{bmatrix}.
\end{equation}
Since $S_j'=S_j^*V$ and hence also by Remark \ref{R:pcc} $S_j=S_j'^*V$, we then have
\begin{align}
 & \begin{bmatrix} M_{\varphi'_j} & 0  \\ 0 & D'_j\end{bmatrix}=\begin{bmatrix} M_{\varphi_j}^* & C_j^* \\ 0 & D_j^* \end{bmatrix}
      \begin{bmatrix} M_z & 0  \\  0 & W_D \end{bmatrix} =
      \begin{bmatrix} M_{\varphi_j}^*M_z & C_j^* W_D  \\ 0 & D_j^*W_D
      \end{bmatrix},  \notag   \\
 &   \begin{bmatrix}  M_{\varphi_j} & 0  \\ 0 & D_j\end{bmatrix}=\begin{bmatrix} M_{\varphi_j'}^* & C_j^{\prime *}   \\ 0 & D_j'^* \end{bmatrix}
      \begin{bmatrix} M_z & 0  \\  0 & W_D \end{bmatrix} =
      \begin{bmatrix} M_{\varphi_j'}^*M_z & C_j'^* W_D  \\ 0 & D_j'^*W_D
      \end{bmatrix}.
      \label{Sj&*}
\end{align}
From equality of the (1,2) entries we see that
$$
0 = C_j^* W_D, \quad 0 = C_j'^* W_D.
$$
As $W_D$ is unitary, in particular $W_D$ is surjective and we may conclude that in fact $C_j = 0$, $C_j'^* = 0$ and the form \eqref{Sjs}
for $S_j$ and $S_j'$ collapses to
\begin{equation} \label{Sjs'}
S_j = \begin{bmatrix} M_{\varphi_j} & 0 \\ 0 & D_j \end{bmatrix}, \quad S'_j = \begin{bmatrix}  M_{\varphi'_j} & 0 \\ 0 & D'_j \end{bmatrix}.
\end{equation}
Looking next at the identities $M_{\varphi_j'}=M_{\varphi_j}^*M_z$ and $M_{\varphi_j} = M_{\varphi'_j}^* M_z$ for each $j=1,2,\dots, d$
in terms of power series expansions of $\varphi_j$ and $\varphi_j'$ then leads to
\begin{equation}  \label{pencils}
 \varphi_j(z)=\widetilde G_{j1}^*+z\widetilde G_{j2} \text{ and }  \varphi_j'(z)=\widetilde G_{j2}^*+z\widetilde G_{j1} \text{ for } j=1,2,\dots, d
\end{equation}
for some $\widetilde G_{j1},\widetilde G_{j2}\in\cB(\cD_{T^*})$.  We shall eventually see that
$\{\widetilde G_{j1},\widetilde G_{j2} \colon j=1,2,\dots,d\}$ is exactly the set of fundamental operators $\{ G_{j1}, G_{j2} \colon j=1,2, \dots, d\}$
for $\underline{T}^*$.

Let us now analyze the second components in \eqref{Sjs'} involving the operator tuples $(D_1,D_2,\dots, D_d)$ and $(D_1',D_2',\dots, D_d')$.
From the relations
$$
(S_j^*,S_j'^*)\Pi_D=\Pi_D (T_j^*,T_{(j)}^*)
$$
we have for all $h\in\cH$,
$$
D_j^*Qh=QT_j^*h \text{ and }D_j'^*Qh=QT_{(j)}^*h,
$$
which by (\ref{TheXi}) implies that $D_j^*|_{\overline{\operatorname{Ran}}\,Q}=X_j^*$, for each $j=1,2,\dots,d$. Since each $S_j$ and
$S_j'$ commute with $V_D$, $D_j$ and $D_j'$ commute with $W_D$ and since $W_D$ is a unitary, $D_j^*$ and $D_j'^*$ also commute with
$W_D$. Using this we have for every $\xi\in\overline{\operatorname{Ran}}\,Q$ and $n\geq 0$,
\begin{align*}
&D_j^*(W_D^n)\xi=W_D^nD_j^*\xi=W_D^nX_j^*\xi=W_D^nW_j^*\xi=W_j^*W_d^n\xi\\
&D_j'^*(W_D^n)\xi=W_D^nD_j'^*\xi=W_D^nX_{(j)}^*\xi=W_D^nW_{(j)}^*\xi=W_{(j)}^*W_d^n\xi.
\end{align*}
As the set of elements of the form $W_D^n \xi$ is dense in $\cR_D$,  we conclude that
\begin{align}\label{Dconclusion}
  D_j=W_j \text{ and }D_j'=W_{(j)} \text{ for each }j=1,2,\dots,d.
\end{align}

To show that $G_{j1} = \widetilde G_{j1}$, by the uniqueness result in part (2) of Theorem \ref{fund-existence} it suffices to show that
\begin{equation}   \label{toshow}
D_{T^*} G_{j1} D_{T^*}  = D_{T^*} \widetilde{G}_{j1} D_{T^*}.
\end{equation}
for $j=1, \dots, d$.
The fundamental operator $G_{j1}$ is characterized as the unique solution of
\begin{equation}   \label{1}
D_{T^*} G_{j1} D_{T^*} = T_j^* - T_{(j)} T^*
\end{equation}
As $\underline{S}$ is a pseudo-commutative contractive lift of $\underline{T}$ with embedding operator $\Pi_D$,
we have by Definition \ref{D:pcc} the intertwining conditions
$$
S_j^*  \Pi_D = \Pi_D T_j^*, \quad S_j^{\prime *} \Pi_D = \Pi_D T_{(j)}^*, \quad W_D^* \Pi_D = \Pi_D T^*
$$
from which we also deduce that
$$
 T_{(j)} = T_{(j)} \Pi_D^* \Pi_D = \Pi_D^* S_j' \Pi_D.
$$
We may then compute
\begin{align}
& T_j^* - T_{(j)} T^*   = \Pi_D^* \Pi_D (T_j^* - T_{(j)} T^*) = \Pi_D^* S_j^* \Pi_D - \Pi_D^* \Pi_D \Pi_D^* S_j' \Pi_D T^* \notag \\
& \quad  = \Pi_D^* (S_j^* - S_j' W_D^*) \Pi_D = \Pi_D^* (S_j^* - S_j^* W_D W_D^*)   = \Pi_D^* S_j^* (I - W_D W_D^*)  \notag \\
& \quad = \Pi_D^* \begin{bmatrix} M_{\varphi_j}^* (I - M_z M_z^*) & 0 \\ 0 & 0 \end{bmatrix} =
\Pi_D^* \begin{bmatrix} (I - M_z M_z^*) \otimes \widetilde G_{j1} & 0 \\ 0 & 0 \end{bmatrix}  \notag \\
& \quad = \widehat \cO^*_{D_{T^*}, T^*} \left((I - M_z M_z^*) \otimes \widetilde G_{j1}\right).
\label{2}
\end{align}
From the general formula
$$
\widehat \cO^*_{D_{T^*}, T^*} \colon \sum_{n=0}^\infty h_n z^n \mapsto \sum_{n=0}^\infty T^n D_{T^*} h_n
$$
for the action of the adjoint observability operator $\cO^*_{D_{T^*}, T^*}$ and combining \eqref{1} and \eqref{2}, we finally arrive at
$$
 D_{T^*} G_{j1} D_{T^*} = \widehat \cO^*_{D_{T^*}, T^*} \left( (I - M_z M_z^*) \otimes \widehat G_{j1} \right) D_{T^*}
 = D_{T^*} \widetilde G_{j1} D_{T^*}
 $$
 and \eqref{toshow} follows as wanted.

A similar computation shows that
$$
D_T^* G_{j2} D_{T^*} = T_{(i)}^* - T_i T^* =  \cdots =D_{T^*} \widetilde G_{j2} D_{T^*}
$$
for $j = 1, \dots, d$ from which it follows that $\widetilde G_{j2} = G_{j2}$ as well.
This completes the proof of uniqueness in Theorem \ref{Thm:U-T}.
\end{proof}

\begin{remark}  \label{R:Dmodel}
The proof of the existence part of Theorem \ref{Thm:U-T} actually gives a canonical model \eqref{Dmodelops}-\eqref{Dmodelops'}
for an arbitrary pseudo-commutative contractive lift of a given commutative contractive operator-tuple $\underline{T} = (T_1, \dots, T_d)$.
By compressing the operators $\underline{S}^D$ to the subspace
$$
  \cH_D = (H^2(\cD_{T^*}) \oplus \cR_D) \ominus \operatorname{Ran} \Pi_D,
$$
we arrive at a Douglas-type functional model for the original commutative contractive operator tuple.
The precise statement is:
{\sl Let $\underline{T}=(T_1,T_2,\dots,T_d)$ be a $d$-tuple of commutative  contractions on a Hilbert space $\cH$ and $T=T_1T_2\cdots T_{d}$. Let $\{G_{i1},G_{i2}:i=1,\dots,d\}$ be the fundamental operators of the adjoint tuple $\underline{T}^*=(T_1^*,T_2^*,\dots,T_d^*)$ and $\underline{W}_{\partial}=(W_{\partial 1},W_{\partial 2},\dots ,W_{\partial d})$ be the canonical commutative unitary-operator tuple associated with $\underline{T}$
as in \eqref{canonicalWpartial}.
Then the tuple $(T_1,\dots,T_{d},T)$ is unitarily equivalent to
\begin{align}\label{Dmodel}
 P_{\mathcal{H}_D}(M_{G_{11}^*+zG_{12}}\oplus W_{\partial 1},\dots,M_{G_{d1}^*+zG_{d2}}\oplus W_{(\partial d)},M_z\oplus W_D)|_{\mathcal{H}_D},
\end{align} and $(T_{(1)},\dots,T_{(d)},T)$ is unitarily equivalent to
\begin{align}\label{Dmodel(i)}
 P_{\mathcal{H}_D}(M_{G_{12}^*+zG_{11}}\oplus W_{(1)},\dots,M_{G_{d2}^*+zG_{d1}}\oplus W_{(d)},M_z\oplus W_D)|_{\mathcal{H}_D},
\end{align}where $\mathcal{H}_D=(H^2(\cD_{T^*})\oplus\cR_D)\ominus\operatorname{Ran }\Pi_D$.}
\end{remark}

We saw in the above proof of uniqueness that the unitary involved in two pseudo-commutative contractive lifts
$(\Pi_1,\mathcal{K}_1, \underline{S},V_1)$ and $(\Pi_2,\mathcal{K}_2, \underline{R},V_2)$ is the same unitary that is involved
in the unitary equivalence of the two minimal isometric lifts $(\Pi_1,\cK_1,V_1)$ and $(\Pi_2,\cK_2,V_2)$ of $T$. Since such a unitary
is unique (see the proof of Theorem I.4.1
in \cite{Nagy-Foias}), we have the following consequence of Theorem \ref{Thm:U-T}.
\begin{corollary}\label{Uni-Cor}
Let $\underline{T}=(T_1,T_2,\dots,T_d)$ be a $d$-tuple of commutative  contractions acting on a Hilbert space and $T=T_1T_2\cdots T_d$.
If $(\Pi_1,\mathcal{K}_1, \underline{S},V_1)$ and $(\Pi_2,\mathcal{K}_2, \underline{R},V_2)$ be two pseudo-commutative contractive lifts of
$(T_1,T_2,\dots,T_d,$ $T)$ such that $(\Pi_1,\cK_1,V_1)=(\Pi_2,\cK_2,V_2)$, then $\underline{S}= \underline{R}.$
\end{corollary}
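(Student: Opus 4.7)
The plan is to invoke Theorem \ref{Thm:U-T} directly and then use classical uniqueness of the minimal isometric lift of a single contraction to force the intertwining unitary to be the identity.

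By Theorem \ref{Thm:U-T}, the hypothesized pseudo-commutative contractive lifts $(\Pi_1,\mathcal{K}_1,\underline{S},V_1)$ and $(\Pi_2,\mathcal{K}_2,\underline{R},V_2)$ of $(T_1,\dots,T_d,T)$ are unitarily equivalent in the sense of Definition \ref{D:terminology}. That is, there exists a unitary $\tau\colon\cK_1\to\cK_2$ such that
\[
\tau\Pi_1=\Pi_2,\qquad \tau V_1=V_2\tau,\qquad \tau S_j=R_j\tau\ \text{for each }j=1,\dots,d.
\]
The first two relations say exactly that $\tau$ is a unitary intertwining the two minimal isometric lifts $(\Pi_1,\cK_1,V_1)$ and $(\Pi_2,\cK_2,V_2)$ of the single contraction $T$.

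By hypothesis $(\Pi_1,\cK_1,V_1)=(\Pi_2,\cK_2,V_2)$. Therefore $\tau$ is a unitary operator on $\cK_1=\cK_2$ satisfying $\tau\Pi_1=\Pi_1$ and $\tau V_1=V_1\tau$. By the classical uniqueness statement for the minimal isometric lift of a single contraction (Theorem I.4.1 of \cite{Nagy-Foias}), the only such unitary is the identity; indeed, by minimality $\cK_1$ is spanned by vectors of the form $V_1^n \Pi_1 h$ ($h\in\cH$, $n\ge 0$), and on each such vector $\tau$ acts by $\tau V_1^n\Pi_1 h=V_1^n\tau\Pi_1 h=V_1^n\Pi_1 h$. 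Hence $\tau=I_{\cK_1}$.

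Substituting $\tau=I$ into the remaining intertwining relations $\tau S_j=R_j\tau$ immediately yields $S_j=R_j$ for every $j=1,\dots,d$, which is exactly $\underline{S}=\underline{R}$. There is no real obstacle here beyond verifying that the unitary produced by Theorem \ref{Thm:U-T} is the same unitary that implements the unitary equivalence of the two minimal isometric lifts of $T$; this is guaranteed by our convention in Definition \ref{D:terminology}, where the intertwining of the $V$-components and the intertwining of the embeddings $\Pi$ are imposed by the same $\tau$.
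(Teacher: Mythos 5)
Your proof is correct and follows essentially the same route as the paper: the paper likewise observes that the unitary furnished by Theorem \ref{Thm:U-T} is precisely the (unique) unitary intertwining the two minimal isometric lifts of $T$, so that when those lifts coincide it must be the identity, forcing $\underline{S}=\underline{R}$. Your explicit minimality computation showing $\tau V_1^n\Pi_1 h=V_1^n\Pi_1 h$ is exactly the content of the uniqueness assertion the paper cites from Theorem I.4.1 of \cite{Nagy-Foias}.
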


We end this section with another model for tuples of commutative contractive operator-tuples. This model will be used crucially in the next section where we analyze characteristic tuples for a given commutative contractive operator-tuple.

Sz.-Nagy and Foias gave a concrete functional model for the minimal isometric dilation for the case of a completely nonunitary (c.n.u.)
 contraction (see \cite{Nagy-Foias} for
a comprehensive treatment).  In their construction of this functional
model appears what they called the {\em{characteristic function}} for a contraction operator $T$ on a Hilbert space $\cH$,
a contractive analytic function on the unit disk $\mathbb D$ defined explicitly in terms of $T$ via the formula
\begin{equation}   \label{CharcFunc}
\Theta_T(z):=[-T+zD_{T^*}(I_\mathcal{H}-zT^*)^{-1}D_T]|_{\mathcal{D}_T} \colon \cD_T \to \cD_{T^*}
\text{ for  $z \in \mathbb{D}$}.
\end{equation}
Also key to their analysis is the so-called {\em{defect}} of the characteristic function  $\Delta_T$ defined a.e. on the unit circle ${\mathbb T}$ as
\begin{equation}    \label{DefectCharc}
\Delta_{T}(\zeta) := (I - \Theta_T(\zeta)^* \Theta_T(\zeta))^{1/2},
\end{equation}
where $\Theta_T(\zeta)$ is the radial limit of the characteristic function.
There it is shown that $(\Pi_{NF},V_{NF})$ is a minimal isometric dilation of $T$, where $V_{NF}$ is the isometry
\begin{equation}\label{Vnf}
 V_{NF}:=M_z\oplus M_{\zeta}|_{\overline{\Delta_T L^2(\cD_T)}} \text{ on }
 \cK_{NF}:=H^2(\cD_{T^*})\oplus\overline{\Delta_T L^2(\cD_T)}
\end{equation}
and $\Pi_{NF}:\cH\to \cK_{NF}$ is some isometry with
\begin{equation}\label{RanPinf}
\cH_{NF}:=\operatorname{Ran}\Pi_{NF}=\begin{bmatrix} H^2(\cD_{T^*}) \\ \overline{ \Delta_{T}L^2(\cD_T)} \end{bmatrix}
  \ominus \begin{bmatrix} \Theta_T \\ \Delta_{T} \end{bmatrix} \cdot H^2(\cD_T).
\end{equation}
It is shown in \cite{BS-Memoir} that, in case $T$ is completely nonunitary, the isometric embedding $\Pi_{NF}$
has the explicit formula
\begin{equation}\label{Pinf}
\Pi_{NF}=(I_{H^2\otimes \cD_{T^*}}\oplus u_{\text{min}})\Pi_D,
\end{equation}
where $u_{\text{min}}:\cR_D\to\overline{\Delta_T L^2(\cD_T)}$ is a unitary that intertwines $W_D$ and
$M_{\zeta}|_{\overline{\Delta_T L^2(\cD_T)}}$. Let us introduce the notation
\begin{align}\label{sharpWs}
  W_{\sharp j}:=u_{\text{min}}W_{\partial j}u_{\text{min}}^*, \quad  U_{\text{min}}:=((I_{H^2}\otimes
  \cD_{T^*})\oplus u_{\text{min}}
\end{align}
for unitary operators $W_{\sharp j}$ on $\overline{ \Delta_{T}L^2(\cD_T)}$ for $j=1, \dots, d$ and a unitary operator
$U_{\rm min} \colon\cR_D \to \overline{ \Delta_{T}L^2(\cD_T)}$.   Then we have
$$
U_{\text{min}}V_D=V_{NF}U_{\text{min}} \text{ and }U_{\text{min}}\Pi_D=\Pi_{NF}.
$$
Using this relation between $\Pi_D$ and $\Pi_{NF}$ we have the following intertwining relations that follow from (\ref{semiDdil}) and (\ref{semiDdil1}), respectively.
\begin{equation} \label{semiNFdil}
 \Pi_{NF}T_i^*=(M_{G_{i1}^*+zG_{i2}}^*\oplus W_{\sharp i}^*)\Pi_{NF}, \quad  \Pi_{NF}T_{(i)}^*=(M_{G_{i2}^*+zG_{i1}}^*\oplus W_{(\sharp i)}^*)\Pi_{NF}.
\end{equation}
Equations (\ref{semiNFdil}) then provide us a Sz.-Nagy--Foias type functional model for tuples of commutative contractions.

\begin{thm}\label{Thm:NFmodel}
  Let $\underline{T}=(T_1,T_2,\dots,T_d)$ be a $d$-tuple of commutative  contractions on a Hilbert space $\cH$ such that
  the contraction operator $T=T_1T_2\cdots T_{d}$ is c.n.u. Let $\{G_{i1},G_{i2}:i=1,\dots,d\}$ be the
  fundamental operators of the adjoint tuple $\underline{T}^*=(T_1^*,T_2^*,\dots,T_d^*)$
  and let the model space $\cH_{NF}$ be as in \eqref{RanPinf}. Then $(T_1,\dots,T_{d},T)$ is unitarily equivalent to
\begin{align}\label{NFmodel}
 P_{\mathcal{H}_{NF}}(M_{G_{11}^*+zG_{12}}\oplus W_{\sharp 1},\dots,M_{G_{d1}^*+zG_{d2}}\oplus W_{\sharp d},M_z\oplus M_{\zeta}|_{\overline{\Delta_T L^2(\cD_T)}})|_{\mathcal{H}_{NF}},
\end{align}
and $(T_{(1)},\dots,T_{(d)},T)$ is unitarily equivalent to
\begin{align}\label{NFmodel(i)}
 P_{\mathcal{H}_{NF}}(M_{G_{12}^*+zG_{11}}\oplus W_{(\sharp1)},\dots,M_{G_{d2}^*+zG_{d1}}\oplus W_{(\sharp d)},M_z\oplus M_{\zeta}|_{\overline{\Delta_T L^2(\cD_T)}})|_{\mathcal{H}_{NF}}.
\end{align}
\end{thm}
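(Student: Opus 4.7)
The strategy is to transport the pseudo-commutative contractive lift of Theorem \ref{Thm:U-T} from its Douglas realization to the Sz.-Nagy--Foias realization of the minimal isometric lift of $T$, and then read off the compression model. Since $T$ is completely nonunitary, formula (\ref{Pinf}) supplies a unitary $u_{\min}\colon \cR_D \to \overline{\Delta_T L^2(\cD_T)}$ intertwining $W_D$ with $M_\zeta|_{\overline{\Delta_T L^2(\cD_T)}}$, so that $U_{\min} = I_{H^2(\cD_{T^*})} \oplus u_{\min}$ intertwines $V_D$ with $V_{NF}$ and satisfies $U_{\min}\Pi_D = \Pi_{NF}$. This is essentially the only input pulled in from outside the machinery already developed above.

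First I would verify the intertwining identities (\ref{semiNFdil}). By the defining relation $W_{\sharp j} := u_{\min} W_{\partial j} u_{\min}^*$, conjugation by $U_{\min}$ sends the block operator $M_{G_{j1}^*+zG_{j2}} \oplus W_{\partial j}$ to $M_{G_{j1}^*+zG_{j2}} \oplus W_{\sharp j}$, since $u_{\min}$ acts only on the second summand and leaves the $H^2(\cD_{T^*})$-part untouched. Multiplying the Douglas identity (\ref{semiDdil}) on the left by $U_{\min}$, inserting $U_{\min}^* U_{\min} = I$ on the right between the block operator and $\Pi_D$, and using $U_{\min}\Pi_D = \Pi_{NF}$, produces the first relation of (\ref{semiNFdil}); the second relation follows identically from (\ref{semiDdil1}). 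Together with the standard Sz.-Nagy--Foias identity $\Pi_{NF} T^* = V_{NF}^* \Pi_{NF}$, this yields the full set of intertwining relations for the $(d+1)$-tuple.

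Second I would convert these intertwining relations into the claimed compression model. The general observation is: if $S^* \Pi = \Pi T^*$ with $\Pi$ an isometric embedding, then taking adjoints and applying $\Pi^* \Pi = I_\cH$ gives $T = \Pi^*  S \Pi$; since $\Pi$ is a unitary from $\cH$ onto $\operatorname{Ran}\Pi$, the map $S \mapsto \Pi^* S \Pi$ is unitarily equivalent via $\Pi$ to the compression $S \mapsto P_{\operatorname{Ran}\Pi}  S |_{\operatorname{Ran}\Pi}$. Applying this with $\Pi = \Pi_{NF}$, with $\operatorname{Ran}\Pi_{NF} = \cH_{NF}$ as in (\ref{RanPinf}), and with $S$ ranging over the block operators $M_{G_{j1}^*+zG_{j2}} \oplus W_{\sharp j}$, $M_{G_{j2}^*+zG_{j1}} \oplus W_{(\sharp j)}$, and $V_{NF} = M_z \oplus M_\zeta|_{\overline{\Delta_T L^2(\cD_T)}}$, simultaneously produces both (\ref{NFmodel}) and (\ref{NFmodel(i)}).

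There is no substantive obstacle once the Douglas-side intertwiners of Theorem \ref{Thm:U-T} and the unitary $u_{\min}$ are in hand; the argument is effectively bookkeeping, spelling out the sentence preceding the theorem. The only mild subtlety is that the pseudo-commutative lift is initially constructed on $\cR_D$, a space intrinsic to the operator $T$, and must be re-expressed on the Sz.-Nagy--Foias defect space $\overline{\Delta_T L^2(\cD_T)}$. This is precisely what $u_{\min}$ achieves, and the hypothesis that $T$ be completely nonunitary is exactly what allows $u_{\min}$ to exist and to preserve the BCL-type block structure when conjugating the entire tuple $(W_{\partial 1}, \dots, W_{\partial d})$ into $(W_{\sharp 1}, \dots, W_{\sharp d})$ simultaneously.
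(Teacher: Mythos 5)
Your proposal is correct and follows essentially the same route as the paper: the paper also obtains the intertwining relations \eqref{semiNFdil} by conjugating the Douglas-model relations \eqref{semiDdil} and \eqref{semiDdil1} with $U_{\min}$ (using $U_{\min}\Pi_D=\Pi_{NF}$ and $W_{\sharp j}=u_{\min}W_{\partial j}u_{\min}^*$), and then reads off the theorem as the compression of the lift to $\cH_{NF}=\operatorname{Ran}\Pi_{NF}$. Your explicit remark that $S^*\Pi=\Pi T^*$ with $\Pi$ isometric yields $T=\Pi^* S\Pi$, hence unitary equivalence with $P_{\cH_{NF}}S|_{\cH_{NF}}$, is exactly the (unstated) final step of the paper's argument.
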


\begin{remark}  \label{R:NFpcc}
Equations (\ref{semiNFdil}) also provide us another model for pseudo-commutative contractive lifts, at least for the
case where the $T = T_1 \cdots T_d$ is c.n.u.  Indeed,
let $\underline{T}=(T_1,T_2,\dots,T_d)$ be a $d$-tuple of commutative  contractions on a Hilbert space
$\cH$, $T=T_1T_2\cdots T_{d}$
and let $\{G_{i1},G_{i2}:i=1,\dots,d\}$ be the set of fundamental operators of the adjoint tuple $\underline{T}^*=(T_1^*,T_2^*,\dots,T_d^*)$.
Let us set
\begin{align}
& \underline{S}^{NF}:=(S_1^{NF},S_2^{NF},\dots,S_d^{NF})   \notag \\
&:=(M_{G_{11}^*+zG_{12}}\oplus W_{\sharp 1},M_{G_{21}^*+zG_{22}}\oplus W_{\sharp 2},\dots,M_{G_{d1}^*+zG_{d2}}\oplus W_{\sharp d})   \label{NFmodelops}  \\
& \underline{S'}^{NF}:=(S_1'^{NF},S_2'^{NF},\dots,S_d'^{NF}), \notag \\
&:=(M_{G_{12}^*+zG_{11}}\oplus W_{(\sharp 1)},M_{G_{22}^*+zG_{21}}
\oplus W_{(\sharp 2)}\dots,M_{G_{d2}^*+zG_{d1}}\oplus W_{(\sharp d)}).
 \label{NFmodelops'}
\end{align}
Then it follows from the definition and from $M_{\zeta}|_{\overline{\Delta_T L^2(\cD_T)}}=W_{\sharp 1}W_{\sharp 2}\cdots W_{\sharp d}$
that $S_j'^{NF}=S_j^{NF *}V_{NF}$ for each $j=1,2,\dots, d$, where $V_{NF}$ is the minimal isometric lift of $T$ as defined in (\ref{Vnf}). Hence
by Equations (\ref{semiNFdil}) it follows that {\sl the tuple $(\Pi_{NF},\mathcal{K}_{NF}, \underline{S}^{NF},V_{NF})$ is a pseudo-commutative
contractive lift of $(T_1,T_2,\dots,T_d,T)$.}
\end{remark}

\section{Characteristic triple for a tuple of commutative  contractions}   \label{S:char-triple}
Let $\underline{T}=(T_1,T_2,\dots,T_d)$ be a $d$-tuple of commutative  contractions on a Hilbert space $\cH$ and
$(\cF_{*},\Lambda_{j*},P_{j*},U_{j*})_{j=1}^d$ be an And\^o tuple for $\underline{T}^*=(T_1^*,T_2^*,$ $\dots, T_d^*)$. Let
$\{G_{j1},G_{j2}:j=1,2,\dots,d\}$ be the set of fundamental operators of $\underline{T}^*$. Then note that by part (2) of
Theorem \ref{Thm:HalmosDil} we have
\begin{align}\label{JHalmosDil*}
(G_{j1},G_{j2})=\Lambda_{1*}^*(\tau_{j*} P_{j*}^\perp U_{j*}^*\tau_{j*}^*,\tau_{j*}U_{j*}P_{j*}\tau_{j*}^*)\Lambda_{1*}
\text{ for } j=1,2,\dots, d.
\end{align}

\begin{definition}
Let $\underline{T}=(T_1,T_2,\dots,T_d)$ be a $d$-tuple of commutative  contractions on a Hilbert space, $\mathbb G_\sharp:=\{G_{j1},G_{j2}:j=1,2,\dots,d\}$ be the set of fundamental operators of $\underline{T}^*$ and $\mathbb W_\sharp:=(W_{\sharp 1},
W_{\sharp 2},\dots,W_{\sharp d})$ be the tuple of commutative  unitaries as in (\ref{sharpWs}). The triple $(\mathbb{G}_\sharp,
\mathbb{W}_\sharp,\Theta_T)$ is called the characteristic triple for $\underline{T}$, where $\Theta_T$ is the characteristic function
for the contraction $T=T_1T_2\cdots T_d$.
\end{definition}

Note that the expression (\ref{JHalmosDil*}) of the fundamental operators of $\underline{T}^*$ indicates the
dependence of the
characteristic triple on a choice of an And\^o tuple for $\underline{T}^*$. However, the uniqueness part of Theorem \ref{fund-existence} says
that the fundamental operators are uniquely determined by $\underline{T}$. Consequently, the characteristic triple,
despite  its apparent
dependence on a choice of an And\^o tuple, turns out to be uniquely determined already by the $d$-tuple $\underline{T}$
of commutative  contractions.
In fact, as Theorem \ref{Thm:CompUniInv} below explains, the characteristic triple (up to the natural notion of equivalence
to be defined next) is a complete unitary invariant for tuples of commutative contractions.

\begin{definition}  \label{D:coincide}
Let $(\mathcal{D},\mathcal{D}_*,\Theta)$, $(\mathcal{D'},\mathcal{D'_*},\Theta')$ be two purely contractive
analytic functions. Let $\mathbb{G}=\{G_{j1},G_{j2}:j=1,2,\dots,d\}$ on $\mathcal{D}_*$, $\mathbb{G}'=\{G_{j1}',G_{j2}':j=1,2,\dots,d\}$ on $\mathcal{D'_*}$
be two sets of contraction operators and $\mathbb{W}=(W_1,W_2,\dots, W_d)$ on $\overline{\Delta_\Theta L^2(\mathcal{D})}$,
$\mathbb{W}'=(W_1',W_2',\dots,W_d')$ on $\overline{\Delta_{\Theta'} L^2(\mathcal{D'})}$ be two tuples of commutative
unitaries such that their product is $M_{\zeta}$ on the respective spaces. We say that the two triples $(\mathbb{G},\mathbb{W},\Theta)$
and $(\mathbb{G}',\mathbb{W}',\Theta')$ {\em coincide} if:
\begin{itemize}
  \item[(i)] $(\mathcal{D},\mathcal{D_*},\Theta)$ and $(\mathcal{D'},\mathcal{D'_*},\Theta')$ {\em coincide}, i.e.,
there exist unitary operators $u: \mathcal{D} \to \mathcal{D'}$ and $u_{*}: \mathcal{D}_{*} \to \mathcal{D'}_{*}$
such that the diagram
\begin{align}\label{coindiagram}
\begin{CD}
\mathcal{D}_T @>\Theta(z)>> \mathcal{D}_{T^*}\\
@Vu VV @VVu_{*} V\\
\mathcal{D}_{T'} @>>\Theta'(z)> \mathcal{D}_{{T'}^*}
\end{CD}
\end{align}
commutes for each $z \in {\mathbb D}$.
\item[(ii)] The same unitary operators $u$, $u_*$ as in part (i)  satisfy the  additional intertwining conditions:
\begin{align*}
&  \mathbb{G}'=(G_1',G_2')=u_*\mathbb{G}u_*^*=(u_*G_1u_*^*,u_*G_2u_*^*,\dots,u_*G_du_*^*),  \notag \\
& \mathbb{W}'=(W_1',W_2')=\omega_u\mathbb{W}\omega_u^*=(\omega_uW_1\omega_u^*,\omega_uW_2\omega_u^*,\dots,
\omega_uW_d\omega_u^*),
\end{align*}
where $\omega_u:\overline{\Delta_{\Theta} L^2(\mathcal{D})}\to\overline{\Delta_{\Theta'} L^2(\mathcal{D'})}$
is the unitary map induced by $u$ according to the formula
\begin{equation}      \label{omega-u}
\omega_u:=(I_{L^2}\otimes u)|_{\overline{\Delta_{\Theta} L^2(\mathcal{D})}}.
\end{equation}
\end{itemize}
\end{definition}

\begin{thm}\label{Thm:CompUniInv}
Let $\underline{T}=(T_1,T_2,\dots,T_d)$ on $\cH$ and $\underline{T}'=(T_1',T_2',\dots,T_d')$ on $\cH'$ be two tuples of commutative  contractions.
Let $(\mathbb{G}_\sharp,\mathbb{W}_\sharp,\Theta_T)$ and $(\mathbb{G}'_\sharp,\mathbb{W}'_\sharp,\Theta_{T'})$ be the characteristic triples of
$\underline{T}$ and $\underline{T}'$, respectively, where $T=T_1T_2\dots, T_d$ and $T'=T_1'T_2'\cdots,T_d'$. If $\underline{T}$ and
$\underline{T}'$ are unitarily equivalent, then $(\mathbb{G}_\sharp,\mathbb{W}_\sharp,\Theta_T)$ and
$(\mathbb{G}'_\sharp,\mathbb{W}'_\sharp,\Theta_{T'})$ coincide.

Conversely, suppose in addition that $T$ and $T'$ are c.n.u.\ with characteristic triples
$(\mathbb{G}_\sharp,\mathbb{W}_\sharp,\Theta_T)$ and $(\mathbb{G}'_\sharp,\mathbb{W}'_\sharp,\Theta_{T'})$  coinciding.
Then $\underline{T}$ and $\underline{T}'$ are unitarily equivalent.
\end{thm}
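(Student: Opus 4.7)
For the forward direction, suppose $\phi\colon\cH\to\cH'$ is a unitary with $\phi T_j = T_j'\phi$ for all $j$; then $\phi T = T'\phi$, and standard Sz.-Nagy--Foias considerations produce unitaries $u\colon\cD_T\to\cD_{T'}$ and $u_*\colon\cD_{T^*}\to\cD_{T'^*}$ via $u(D_T h)=D_{T'}\phi h$ and $u_*(D_{T^*} h)=D_{T'^*}\phi h$.  These make the characteristic functions $\Theta_T$ and $\Theta_{T'}$ coincide in the sense of \eqref{coindiagram}, and the relation $\Theta_{T'}u=u_*\Theta_T$ yields $u^*\Delta_{T'}^2u=\Delta_T^2$, so $\omega_u:=(I_{L^2}\otimes u)|_{\overline{\Delta_T L^2(\cD_T)}}$ is a well-defined unitary onto $\overline{\Delta_{T'}L^2(\cD_{T'})}$.

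The key move is to promote $\phi$ to a unitary between the Nagy--Foias model spaces and then invoke uniqueness of the pseudo-commutative contractive lift (Corollary \ref{Uni-Cor}).  Since $(\Pi_{NF},\cK_{NF},V_{NF})$ and $(\Pi'_{NF},\cK'_{NF},V'_{NF})$ are minimal isometric lifts of the unitarily equivalent c.n.u.\ contractions $T$ and $T'$, classical Sz.-Nagy--Foias theory supplies a unique unitary $\tilde\phi\colon\cK_{NF}\to\cK'_{NF}$ with $\tilde\phi\Pi_{NF}=\Pi'_{NF}\phi$ and $\tilde\phi V_{NF}=V'_{NF}\tilde\phi$, and further identifies $\tilde\phi$ as the block-diagonal unitary $(I_{H^2}\otimes u_*)\oplus\omega_u$.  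Conjugating the canonical PCC lift $(\Pi'_{NF},\cK'_{NF},\underline{S}'^{NF},V'_{NF})$ of $(T'_1,\dots,T'_d,T')$ (supplied by Remark \ref{R:NFpcc}) by $\tilde\phi^{-1}$ and using $\phi T_j=T'_j\phi$, one verifies that $(\Pi_{NF},\cK_{NF},\tilde\phi^{-1}\underline{S}'^{NF}\tilde\phi,V_{NF})$ is a PCC lift of $(T_1,\dots,T_d,T)$ sharing its isometric lift with the canonical PCC lift $(\Pi_{NF},\cK_{NF},\underline{S}^{NF},V_{NF})$. Corollary \ref{Uni-Cor} then forces $\tilde\phi S_j^{NF}=S'^{NF}_j\tilde\phi$ for each $j$; reading off the two diagonal block components of this identity and matching pencil coefficients produces at once $u_*G_{ji}u_*^*=G'_{ji}$ for $i=1,2$ and $\omega_u W_{\sharp j}\omega_u^*=W'_{\sharp j}$, completing the proof that the characteristic triples coincide.

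For the converse, assume $T$ and $T'$ are c.n.u.\ and the triples coincide via $(u,u_*)$ and the induced $\omega_u$.  Set $\tilde u:=(I_{H^2}\otimes u_*)\oplus\omega_u\colon\cK_{NF}\to\cK'_{NF}$; classical Sz.-Nagy--Foias theory, using only the coincidence of $\Theta_T$ and $\Theta_{T'}$, shows $\tilde u$ is unitary, intertwines $V_{NF}$ with $V'_{NF}$, and carries the subspace $\{(\Theta_T f,\Delta_T f)\colon f\in H^2(\cD_T)\}$ onto its primed counterpart; hence $\tilde u$ restricts to a unitary $\cH_{NF}\to\cH'_{NF}$.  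The coincidence assumptions $u_*G_{ji}u_*^*=G'_{ji}$ and $\omega_u W_{\sharp j}\omega_u^*=W'_{\sharp j}$ now translate directly into $\tilde u S_j^{NF}=S'^{NF}_j\tilde u$ on all of $\cK_{NF}$.  Compressing this intertwining to the subspaces $\cH_{NF}$ and $\cH'_{NF}$ and invoking the Sz.-Nagy--Foias functional model of Theorem \ref{Thm:NFmodel} on both sides yields the desired unitary equivalence between $\underline{T}$ and $\underline{T}'$.

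The principal obstacle is the structural identification in the forward direction of $\tilde\phi$ as the block-diagonal operator $(I_{H^2}\otimes u_*)\oplus\omega_u$; this requires tracking the canonical transition unitary $u_{\text{min}}$ of \eqref{sharpWs} from the Douglas model to the Sz.-Nagy--Foias model and verifying its compatibility with the primed system via the explicit relation \eqref{Pinf}.  Once that identification is secured, the heavy lifting is done by Corollary \ref{Uni-Cor}, which canonically transfers the intertwining from the single product operator $T$ to the individual components $T_j$ of the tuple.
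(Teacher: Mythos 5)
Your argument follows the paper's own proof in all essentials: the forward direction rests on conjugating the canonical Sz.-Nagy--Foias pseudo-commutative contractive lift of $\underline{T}'$ back to the unprimed model via the block-diagonal unitary $(I_{H^2}\otimes u_*)\oplus\omega_u$ (whose identification as the canonical intertwiner of the minimal lifts is exactly the ``$\widetilde\Pi=\Pi_{NF}$'' step the paper also only sketches) and then invoking the uniqueness statement of Corollary \ref{Uni-Cor}, while the converse uses the coincidence data to transport the functional model of Theorem \ref{Thm:NFmodel} across the induced unitary $\cH_{NF}\to\cH'_{NF}$. The only cosmetic difference is that you read off the intertwining $u_*G_{ji}u_*^*=G'_{ji}$ from the same application of Corollary \ref{Uni-Cor} that handles $\mathbb{W}_\sharp$, whereas the paper obtains it separately and more directly from the defining equations \eqref{En-fundeqn*} of the fundamental operators together with \eqref{IntwinDefects}; both routes are valid.
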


\begin{proof}
First let us suppose that $\underline{T}$ and $\underline{T}'$ be unitarily equivalent via a unitary similarity $U:\cH\to\cH'$. Then
\begin{align}\label{IntwinDefects}
U(I-T^*T)=(I-T'^*T')U \text{ and }U(I-TT^*)=(I-T'T'^*)U
\end{align}
and the functional calculus for positive operators implies that $U$ induces two unitary operators
\begin{align}\label{u&u*}
u:=U|_{\cD_T}:\cD_T\to\cD_{T'} \text{ and }u_*:=U|_{\cD_{T^*}}:\cD_{T^*}\to\cD_{T'^*}.
\end{align}
A consequence of the Sz.-Nagy--Foias theory \cite{Nagy-Foias} is
that $u_*\Theta_{T}u^*=\Theta_{T'}$ showing $\Theta_T$ and $\Theta_{T'}$ coincide, i.e., condition (i) holds.

As for condition (ii),  note that since the fundamental operators satisfy the fundamental equations (\ref{En-fundeqn*}),
one can easily deduce using (\ref{IntwinDefects}) that
\begin{align}\label{IntwinFunds}
u_*(G_{j1},G_{j2})=(G_{j1}',G_{j2}') u_*\text{ for each }j=1,2,\dots,d,
\end{align}
where $\mathbb{G}_\sharp=\{G_{j1},G_{j2}:j=1,2,\dots,d\}$ and $\mathbb{G}'_\sharp=\{G_{j1}',G_{j2}':j=1,2,\dots,d\}$.
It remains to establish the unitary equivalence of $\mathbb{W}_\sharp$ and $\mathbb{W}'_\sharp$ via $\omega_u=
(I_{L^2}\otimes u)|_{\overline{\Delta_{T} L^2(\mathcal{D}_T)}}$. To this end, we consider the tuple
$(\widetilde\Pi,\widetilde{\mathcal{K}}, \widetilde{\underline{S}}, \widetilde{\underline{S}'},\widetilde V)$, where $\widetilde \cK = \cK_{NF}$
and $\widetilde{V}=V_{NF}$ as in (\ref{Vnf}) and where
\begin{align}
 \widetilde{\Pi}:= & ((I_{H^2}\otimes u_*^*)\oplus \omega_u^*)\Pi_{NF}'U:\mathcal{H}\to H^2(\mathcal{D}_{T^*})
\oplus\overline{\Delta_{T}L^2(\mathcal{D}_{T})}   \notag \\
 \widetilde{\underline{S}}:= & (\widetilde S_1,\widetilde S_2,\dots,\widetilde S_d):=  \notag \\
&  (M_{G_{11}^*+zG_{12}}\oplus W''_{ 1},M_{G_{21}^*+zG_{22}}\oplus W''_{ 2},\dots,M_{G_{d1}^*+zG_{d2}}\oplus W''_{ d}) \notag \\
  \widetilde{\underline{S'}}:=& (\widetilde S_1',\widetilde S_2',\dots,\widetilde S_d'):= \notag \\
  & (M_{G_{12}^*+zG_{11}}\oplus W''_{( 1)},M_{G_{22}^*+zG_{21}}\oplus W''_{( 2)}\dots,M_{G_{d2}^*+zG_{d1}}\oplus W''_{( d)})   \label{AuxNotasn}
\end{align}
with
$$
\mathbb{W}'':=(W''_1,W''_2,\dots,W''_d):=\omega_u\mathbb{W}_\sharp\omega_u^*=(\omega_u W_{\sharp1}\omega_u^*,\omega_u W_{\sharp2}\omega_u^*,\dots,\omega_u W_{\sharp d}\omega_u^*).
$$
By tracing through the intertwining properties of the unitary identification maps $U$ and $\omega_u$, one can see that actually
$\widetilde \Pi = \Pi_{NF}$.
A further consequence of these intertwining properties is that the tuple
$$
  (\Pi_{NF}, \cK_{NF}, \underline{S}^{NF}, \underline{S}^{\prime NF}, V_{NF})
$$
as in \eqref{NFmodelops} and \eqref{NFmodelops} being a pseudo-commutative contractive lift of $\underline{T}$ implies that
$(\Pi, \cK_{NF}, \underline{\widetilde S}, \underline{\widetilde S}', V_{NF})$ is a pseudo-commutative contractive lift of $\underline{T}$ as well.
A direct application of Corollary \ref{Uni-Cor} then tells us that ${\mathbb W}'' = {\mathbb W}_\sharp$, i.e.,
$$
  \omega_u W_{\sharp j} \omega_u^* = W_{\sharp j} \text{ for } j = 1, \dots, d
$$
and condition (ii) in Definition \ref{D:coincide} is now verified as wanted.

Conversely, assume that the product operators $T$ and $T'$ are c.n.u.\ and $\underline{T}$ and $\underline{T}'$ have characteristic triples which
coincide.  By Theorem \ref{Thm:NFmodel} each of $\underline{T}$ and $\underline{T}'$ is unitarily equivalent to its respective
Sz.-Nagy--Foias functional model. It is now a straightforward exercise to see that the unitary identification maps  $u$ and $u_*$ in the
definition of the coincidence of the characteristic triples leads to a unitary identification of the model spaces $\cH_{NF}$ and $\cH'_{NF}$
which also implements a unitary similarity of the respective model operator tuples \eqref{NFmodel} and \eqref{NFmodel(i)} associated with
$\underline{T}$ and $\underline{T}'$.  This completes the proof of  Theorem \ref{Thm:CompUniInv}.
\end{proof}

We next introduce the
 notion of {\em admissible triple} for a collection $\{  {\mathbb G},  {\mathbb W}, \Theta \}$ of the same sort as appearing
 in Definition \ref{D:coincide} but which satisfies some additional conditions; the additional conditions correspond to
 what is needed to conclude that the triple arises as the characteristic triple for some contractive commutative tuple
 $\underline{T}$.

\begin{definition}   \label{D:admissible}
Suppose that $(\cD, \cD_*, \Theta)$ is a purely contractive analytic function, ${\mathbb G} = \{ G_{j1}, G_{j2} \colon j = 1, \dots, d\}$
is a collections of operators on $\cD_*$,  ${\mathbb W} = \{ W_1, \dots, W_d\}$ is a commutative $d$-tuple of unitary operators on
$\overline{\Delta_\Theta L^2(\cD)}$ such that:
\begin{enumerate}
\item Each $M_{G_{j1}^* + z G_{j2}}$ is a contraction operator on $H^2(\cD_*)$.
\item $W_1 \cdots W_d = M_\zeta|_{\overline{\Delta_\Theta L^2(\cD)}}$.
\item The space $\cQ_\Theta : =\sbm{ \Theta \\ \Delta_\Theta } H^2(\cD) \subset \sbm{H^2(\cD_*) \\ \overline{\Delta_\Theta L^2(\cD)}}$ is jointly invariant for the operator tuple $\left\{ \sbm{ M_{G_{j1}^* + z G_{j2}} & 0 \\ 0 & W_j } \colon j=1, \dots, d\right\}$.
\item With $\cK_\Theta = \sbm{H^2(\cD_*) \\ \overline{\Delta_\Theta L^2(\cD)} }$ and $\cH_\Theta = \cK_\Theta \ominus \cQ_\Theta$ and with
operators $T_j$ on $\cH_\Theta$ defined by
\begin{equation}  \label{adm-funcmodel}
   {\mathbf T}_j = P_{\cH(\Theta)} \begin{bmatrix} M_{G_{j1}^* + z G_{j2}} & 0 \\ 0 & W_j \end{bmatrix} \big |_{\cH(\Theta)} \text{ for } j = 1, \dots, d,
\end{equation}
the operator-tuple $({\mathbf T}_1, \dots, {\mathbf T}_d)$ is commutative with product (in any order) then given by
$$
   {\mathbf T}_1 \cdots {\mathbf T}_d = P_{\cH(\Theta)} \begin{bmatrix}      M_z & 0 \\ 0 & M_\zeta  \end{bmatrix} \big |_{\cH(\Theta)}.
$$
\end{enumerate}
Then we shall say that the collection $\{ {\mathbb G}, {\mathbb W}, \Theta\}$ is an {\em admissible triple} and that the commutative
contractive operator-tuple $\underline{\mathbf T} = ({\mathbf T}_1, \dots, {\mathbf T}_d)$ acting on the space $\cH(\Theta)$
\eqref{adm-funcmodel} is the {\em functional model} associated with the admissible triple $\{ {\mathbb G}, {\mathbb W}, \Theta\}$.
\end{definition}

Let us note that the functional model associated with an admissible triple $\{{\mathbb G}, {\mathbb W}, \Theta\}$
also displays a pseudo-commutative contractive lift for its functional-model commutative, contractive operator
tuple ${\mathbf T}$, namely:
\begin{align*}
& {\mathbf S} : = \left\{ \begin{bmatrix} M_{G_{j1}^* + z G_{j2}} & 0 \\ 0 & W_j \end{bmatrix}
\colon j=1, \dots, d \right\}, \quad
 V = \begin{bmatrix}  M_z & 0 \\ 0 & M_\zeta \end{bmatrix} \\
& {\mathbf S}': = \left\{ \begin{bmatrix} M_{G_{j2}^* + z G_{j1}} & 0 \\ 0 & W_{(j)} \end{bmatrix} \colon j= 1, \dots, d \right\}.
\end{align*}

Note also that it easily follows from the definitions that the characteristic triple for a commutative contractive
$d$-tuple $\underline{T}$ is an admissible triple.
Furthermore the functional model associated with the characteristic triple
$({\mathbb G}_\sharp, {\mathbb W}_\sharp, \Theta_T)$ is the same as the
functional model obtained by considering $({\mathbb G}_\sharp, {\mathbb W}_\sharp, \Theta_T)$ as an admissible
triple.  The content of Theorem \ref{Thm:NFmodel} is that any commutative contractive tuple $\underline{T}$ is
unitarily equivalent to its associated functional model $\underline{\mathbf T}$.

Our next goal is to indicate the reverse path:  how to go from an admissible triple to a characteristic triple
for some commutative contractive pair $\underline{T}$.  We state the result without proof.

\begin{thm} \label{Thm:reverse}   If $({\mathbb G}, {\mathbb W}, \Theta)$ is an admissible triple, then $({\mathbb G},
{\mathbb W}, \Theta)$ is a characteristic triple for some contractive operator tuple.  More precisely,
the admissible triple $({\mathbb G}, {\mathbb W}, \Theta)$  coincides with the
characteristic triple  $({\mathbb G}_\sharp, {\mathbb W}_\sharp, \Theta_{\mathbf T})$ of its
functional model.
\end{thm}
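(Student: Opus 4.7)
The plan is to construct the functional model $\underline{\mathbf T}$ from the admissible triple, identify its characteristic function as $\Theta$, exhibit the admissible-triple data as a pseudo-commutative contractive lift of $\underline{\mathbf T}$, and then invoke Corollary \ref{Uni-Cor} to force coincidence with the canonical Sz.-Nagy--Foias pseudo-commutative contractive lift of Remark \ref{R:NFpcc}, whose data are precisely the characteristic triple $({\mathbb G}_\sharp, {\mathbb W}_\sharp, \Theta_{\mathbf T})$ of $\underline{\mathbf T}$.

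For the characteristic function step, note that by Definition \ref{D:admissible}(4) the product ${\mathbf T} = {\mathbf T}_1 \cdots {\mathbf T}_d$ coincides with the compression of $M_z \oplus M_\zeta$ to $\cH_\Theta = \cK_\Theta \ominus \sbm{\Theta \\ \Delta_\Theta} H^2(\cD)$. Since $\Theta$ is purely contractive, this is precisely the Sz.-Nagy--Foias functional model for a completely nonunitary contraction with characteristic function $\Theta$. The classical theory therefore identifies $\Theta_{\mathbf T}$ with $\Theta$ via the identity defect-space identifications $u = I_\cD$, $u_* = I_{\cD_*}$, and identifies the Sz.-Nagy--Foias minimal isometric lift of ${\mathbf T}$ with $(\Pi_{\cH_\Theta} \colon \cH_\Theta \hookrightarrow \cK_\Theta,\ V = M_z \oplus M_\zeta)$.

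Next, set ${\mathbf S}_j = \sbm{M_{G_{j1}^* + z G_{j2}} & 0 \\ 0 & W_j}$ and ${\mathbf S}_j' := {\mathbf S}_j^* V$. A direct calculation using $M_{A + zB}^* M_z = M_{B^* + z A^*}$ on $H^2(\cD_*)$ together with admissibility condition (2) yields ${\mathbf S}_j' = \sbm{M_{G_{j2}^* + z G_{j1}} & 0 \\ 0 & W_{(j)}}$. Both pairs $({\mathbf S}_j, V)$ and $({\mathbf S}_j', V)$ commute from the block-diagonal structure (noting that $W_j$ commutes with $M_\zeta = W_1 \cdots W_d$ by (2)). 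Admissibility (3) gives ${\mathbf S}_j \cQ_\Theta \subseteq \cQ_\Theta$, equivalently ${\mathbf S}_j^* \cH_\Theta \subseteq \cH_\Theta$, and combined with the definition of ${\mathbf T}_j$ this yields ${\mathbf S}_j^* \Pi_{\cH_\Theta} = \Pi_{\cH_\Theta} {\mathbf T}_j^*$. The delicate step is verifying the analogous intertwining ${\mathbf S}_j'^* \Pi_{\cH_\Theta} = \Pi_{\cH_\Theta} {\mathbf T}_{(j)}^*$, which is equivalent to ${\mathbf S}_j' \cQ_\Theta \subseteq \cQ_\Theta$ together with $P_{\cH_\Theta} {\mathbf S}_j'|_{\cH_\Theta} = {\mathbf T}_{(j)}$. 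For $h \in \cH_\Theta$, decomposing $V h = {\mathbf T} h + q_h$ with $q_h \in \cQ_\Theta$ gives ${\mathbf S}_j'^* h = V^* {\mathbf S}_j h = {\mathbf T}^* {\mathbf T}_j h + V^* P_{\cQ_\Theta} {\mathbf S}_j h$, where ${\mathbf T}^* {\mathbf T}_j = {\mathbf T}_{(j)}^*(I - D_{{\mathbf T}_j}^2)$ from commutativity of $\underline{\mathbf T}$. The residual identity $V^* P_{\cQ_\Theta} {\mathbf S}_j h = {\mathbf T}_{(j)}^* D_{{\mathbf T}_j}^2 h$ I plan to derive by using the intertwiner $Y_j$ on $H^2(\cD)$ (satisfying $M_{G_{j1}^* + z G_{j2}} \Theta = \Theta Y_j$ and $W_j \Delta_\Theta = \Delta_\Theta Y_j$) that is furnished by condition (3), together with the product identity from condition (4). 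This is the main technical obstacle.

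With the second intertwining established, $(\Pi_{\cH_\Theta}, \cK_\Theta, \underline{\mathbf S}, V)$ is a pseudo-commutative contractive lift of $\underline{\mathbf T}$. Remark \ref{R:NFpcc} applied to $\underline{\mathbf T}$ provides another such lift $(\Pi_{NF}, \cK_{NF}, \underline{\mathbf S}^{NF}, V_{NF})$ with ${\mathbf S}_j^{NF} = \sbm{M_{G_{j\sharp 1}^* + z G_{j\sharp 2}} & 0 \\ 0 & W_{\sharp j}}$. The identifications above give $(\Pi_{NF}, \cK_{NF}, V_{NF}) = (\Pi_{\cH_\Theta}, \cK_\Theta, V)$, so Corollary \ref{Uni-Cor} forces $\underline{\mathbf S} = \underline{\mathbf S}^{NF}$. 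Comparing block-diagonal entries yields $G_{jk} = G_{j\sharp k}$ for $k=1,2$ and $W_j = W_{\sharp j}$, establishing the coincidence of $({\mathbb G}, {\mathbb W}, \Theta)$ with the characteristic triple $({\mathbb G}_\sharp, {\mathbb W}_\sharp, \Theta_{\mathbf T})$ of its functional model.
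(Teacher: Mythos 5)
The paper states Theorem \ref{Thm:reverse} explicitly \emph{without} proof, so there is no in-paper argument to compare against; judging your proposal on its own terms, the strategy --- realize the functional model $\underline{\mathbf T}$, show that $(\Pi_{\cH_\Theta}, \cK_\Theta, \underline{\mathbf S}, M_z\oplus M_\zeta)$ is a pseudo-commutative contractive lift of $\underline{\mathbf T}$, and then play it off against the canonical lift of Remark \ref{R:NFpcc} via the uniqueness in Theorem \ref{Thm:U-T} and Corollary \ref{Uni-Cor} --- is clearly the intended route, and your computations of $\mathbf{S}_j' = \mathbf{S}_j^* V$, of the commutativity of the pairs $(\mathbf{S}_j,V)$ and $(\mathbf{S}_j',V)$, and of the first intertwining $\mathbf{S}_j^*\Pi = \Pi\mathbf{T}_j^*$ from condition (3) are all correct.

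The genuine gap is exactly where you flag it: the second intertwining $\mathbf{S}_j^{\prime *}\Pi = \Pi\mathbf{T}_{(j)}^*$ is announced as something you ``plan to derive'' but is never established, and it does not obviously follow from conditions (1)--(4) of Definition \ref{D:admissible}. Unpacking it, the identity is equivalent to two separate statements for $h,g\in\cH_\Theta$: (a) $P_{\cQ_\Theta}\mathbf{S}_j h \perp V\cQ_\Theta$, so that $V^*\mathbf{S}_j h$ lands back in $\cH_\Theta$ (condition (3) gives invariance of $\cQ_\Theta$ under $\mathbf{S}_j$ and under $V$, but not under $\mathbf{S}_j^*V$); and (b) $\langle P_{\cQ_\Theta}\mathbf{S}_j h,\, P_{\cQ_\Theta}Vg\rangle = \langle D_{\mathbf{T}_j}h,\, D_{\mathbf{T}_j}\mathbf{T}_{(j)}g\rangle$, which is your residual identity in sesquilinear form. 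In the forward direction (the characteristic triple of an actual commutative tuple) these follow from the And\^o-tuple relations \eqref{inflate-ID}; for an abstract triple satisfying only (1)--(4) they are additional constraints, and the paper itself hints at this: Definition \ref{D:admissible} lists four conditions while the subsequent Remark speaks of ``auxiliary conditions (1)-(5)'', suggesting the printed definition omits precisely the hypothesis (joint invariance of $\cQ_\Theta$ under the $\mathbf{S}_j'$ together with $P_{\cH_\Theta}\mathbf{S}_j'|_{\cH_\Theta} = \mathbf{T}_{(j)}$) that would close your argument. Until (a) and (b) are either derived from (1)--(4) or added as hypotheses, the proof is incomplete. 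A secondary, repairable imprecision: the defect spaces $\cD_{\mathbf T}$, $\cD_{\mathbf{T}^*}$ of the model operator are not literally $\cD$ and $\cD_*$, so $(\Pi_{NF},\cK_{NF},V_{NF})$ equals $(\Pi_{\cH_\Theta},\cK_\Theta,V)$ only after transport by the coincidence unitaries $u, u_*$; one should therefore apply the full uniqueness statement of Theorem \ref{Thm:U-T} and check that the implementing unitary is $(I_{H^2}\otimes u_*)\oplus\omega_u$, which is what produces coincidence of triples in the sense of Definition \ref{D:coincide} rather than literal equality.
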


Since model theory and unitary classification for commuting tuples of unitary operators can be handled by spectral theory,
the importance of the next result is that the c.n.u.~restriction on $T = T_1 \cdots T_d$ appearing in Theorem \ref{Thm:NFmodel}
and Theorem \ref{Thm:CompUniInv} is not essential. This result for the case $d=1$ goes back to Sz.-Nagy-Foias \cite{Nagy-Foias}.

\begin{thm}\label{Thm:CanDec}
Let $\underline{T}=(T_1,T_2,\dots,T_d)$ be a commutative contractive operator-tuple acting on a Hilbert space $\cH$. Then there corresponds a decomposition of $\cH$ into the orthogonal sum of two subspaces reducing each $T_j$, $j=1,2,\dots,d$, say
$\cH=\cH_{u}\oplus\cH_{c}$, such that with
\begin{align}
&(T_{1u},T_{2u},\dots,T_{du})=(T_1,T_2,\dots,T_d)|_{\cH_u}, \notag \\
&(T_{1c},T_{2c},\dots,T_{dc})=(T_1,T_2,\dots,T_d)|_{\cH_c},
\label{CanDec}
\end{align}
$T_u=T_{1u}T_{2u}\cdots T_{du}$ is a unitary and $T_{c}=T_{1c}T_{2c}\cdots T_{dc}$ is a completely nonunitary contraction.
Moreover, then $T_u \oplus T_c$ with respect to $\cH=\cH_{u}\oplus\cH_{c}$ is the Sz.-Nagy--Foias canonical decomposition for the contraction operator $T=T_1 T_2\cdots T_d$.
\end{thm}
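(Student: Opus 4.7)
To prove Theorem \ref{Thm:CanDec}, I would take $\cH = \cH_u \oplus \cH_c$ to be the classical Sz.-Nagy--Foias canonical decomposition of the single contraction $T = T_1 T_2 \cdots T_d$, so that $T|_{\cH_u}$ is unitary and $T|_{\cH_c}$ is completely nonunitary. The content of the theorem then reduces to showing that this decomposition reduces each $T_j$ and that each $T_j|_{\cH_u}$ is unitary; the final claim that it coincides with the Sz.-Nagy--Foias canonical decomposition of $T$ is then automatic by construction.

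First I would establish norm-preservation on $\cH_u$: for any $h \in \cH_u$ and every $k$, one has $\|T_k h\| = \|T_k^* h\| = \|T_{(k)} h\| = \|T_{(k)}^* h\| = \|h\|$. This follows from the telescoping
\[
\|h\| = \|T h\| = \|T_k T_{(k)} h\| \le \|T_{(k)} h\| \le \|h\|,
\]
in which every inequality must be an equality, combined with commutativity (to permute factors) and the same argument applied to $T^*$. In particular $D_T h = D_{T^*} h = 0$ for every $h \in \cH_u$.

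The main obstacle is showing that $\cH_u$ is invariant under each $T_j$ and $T_j^*$. The key trick is to exploit the fundamental operator equations \eqref{En-fundeqn} and \eqref{En-fundeqn*}: because their right-hand sides are sandwiched between defect operators that vanish on $\cH_u$, they collapse to the pointwise identities
\[
T_k h = T_{(k)}^* T h, \qquad T_k^* h = T_{(k)} T^* h \qquad (h \in \cH_u).
\]
Using these, the commutativity of $T^*$ with $T_{(k)}^*$ (which follows from $[T, T_{(k)}]=0$), and the unitarity relations $T^* T h = T T^* h = h$ on $\cH_u$, I compute for $n \ge 1$
\[
T^{*n} T_k h \;=\; T_{(k)}^* T^{*n} T h \;=\; T_{(k)}^* T^{*(n-1)} h.
\]
Since $T^{*(n-1)} h \in \cH_u$ (as the SNF decomposition already reduces $T$) and $T_{(k)}^*$ acts isometrically on $\cH_u$ by the first step, this yields $\|T^{*n} T_k h\| = \|h\|$; combined with the easy identity $\|T^n T_k h\| = \|T_k T^n h\| = \|h\|$ (commutativity plus isometric action on $\cH_u$), this shows $T_k h \in \cH_u$. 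A symmetric computation based on $T_k^* h = T_{(k)} T^* h$ shows $T_k^* h \in \cH_u$, and so $\cH_u$ reduces each $T_k$.

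Once $\cH_u$ is reducing for each $T_j$, the proof concludes via the elementary fact that commuting contractions whose product is a unitary operator must themselves be unitary (another telescoping-norm argument, applied on $\cH_u$). Hence each $T_j|_{\cH_u}$ is unitary, so $\underline{T}|_{\cH_u} = \underline{T}_u$ is a commutative unitary tuple; by construction $T|_{\cH_c}$ is c.n.u., and so $\cH = \cH_u \oplus \cH_c$ serves simultaneously as the tuple decomposition asserted in the theorem and as the Sz.-Nagy--Foias canonical decomposition of the single contraction $T = T_1 T_2 \cdots T_d$.
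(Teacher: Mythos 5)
Your proof is correct, but it follows a genuinely different route from the paper's. Both arguments start from the Sz.-Nagy--Foias canonical decomposition of the single product operator $T$ and must show that $\cH_u$ reduces each $T_j$; the difference lies in how the off-diagonal interaction is killed. The paper writes each $T_j$ and $T_{(j)}$ as a $2\times 2$ block matrix with respect to $\cH_u\oplus\cH_c$ and feeds the contractivity of the fundamental operators (via Theorem \ref{Thm:HalmosDil}) into the operator inequality $2D_T^2-\operatorname{Re}\big(\omega(T_j-T_{(j)}^*T)\big)-\operatorname{Re}\big(\zeta(T_{(j)}-T_j^*T)\big)\ge 0$; positive semidefiniteness of the resulting block matrix forces first the diagonal and then the off-diagonal entries into algebraic relations, and the completely nonunitary character of $T_c$ is invoked at the very end to conclude $B_j=C_j=0$. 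You instead use only the fundamental \emph{identities} \eqref{En-fundeqn} and \eqref{En-fundeqn*} (not the contractivity of the fundamental operators), observe that they collapse to $T_kh=T_{(k)}^*Th$ and $T_k^*h=T_{(k)}T^*h$ on $\cH_u$ because the defect operators annihilate $\cH_u$, and then verify the Sz.-Nagy--Foias power-norm characterization of $\cH_u$ (namely $\|T^n g\|=\|T^{*n}g\|=\|g\|$ for all $n$) directly for $g=T_kh$ and $g=T_k^*h$. Your argument is shorter and more elementary --- it needs neither the Halmos-dilation contractivity result nor the $2\times 2$ positivity bookkeeping --- while the paper's computation has the side benefit of exhibiting explicit algebraic relations among the block entries of the $T_j$. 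One small remark: your argument can be made entirely independent of Section \ref{S:fundamental}, since the identity $T_kh=T_{(k)}^*Th$ for $h\in\cH_u$ also follows from the direct estimate $\|T_kh-T_{(k)}^*Th\|^2=\|T_kh\|^2-2\operatorname{Re}\langle T_{(k)}T_kh,Th\rangle+\|T_{(k)}^*Th\|^2\le 0$, using the isometric action established in your first step; so the fundamental operators are a convenience rather than a necessity here.
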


\begin{proof}
Let $\{F_{j1},F_{j2}: j=1,2,\dots,d\}$ be the set of fundamental operators of $\underline{T}$. Then by Theorem \ref{fund-existence}, for each $j=1,2,\dots,d$, we have
\begin{equation}   \label{CanFundEqn}
T_j - T_{(j)}^* T = D_{T} F_{j1 } D_{T} \text{ and }
T_{(j)} - T_{j}^* T = D_{T} F_{j2 } D_{T}.
\end{equation}
By part (1) of Theorem \ref{Thm:HalmosDil} each of $F_{j1}$ and $F_{j2}$ are contractions. Consequently, we have for every
$\omega$ and $\zeta$ in $\mathbb{T}$
\begin{align}\label{CrucIneq1}
I_{\cD_T}-\operatorname{Re}(\omega F_{j1})\geq 0 \text{ and }I_{\cD_T}-\operatorname{Re}(\zeta F_{j2})\geq 0.
\end{align}
Adding together the two inequalities \eqref{CrucIneq1} then gives
\begin{align}\label{CrucIneq}
2I_{\cD_T}-\operatorname{Re}(\omega F_{j1}+\zeta F_{j2})\geq 0 \text{ for all }\omega,\zeta\in\mathbb{T}.
\end{align}
Recall that the fundamental operators act on $\cD_T=\overline{\operatorname{Ran}}\;D_T$. Therefore inequality (\ref{CrucIneq}) is equivalent to
\begin{align*}
2D_T^2-\operatorname{Re}(\omega D_TF_{j1}D_T+\zeta D_TF_{j2}D_T)\geq 0, \text{ for all }\omega,\zeta \in\mathbb{T},
\end{align*}
By (\ref{CanFundEqn}) we see that this in turn is the same as
\begin{align}\label{FinalIneq}
2D_T^2-\operatorname{Re}(\omega (T_j-T_{(j)}^*T))-\operatorname{Re}(\zeta (T_{(j)}-T_j^*T))\geq 0, \text{ for all }\omega,\zeta \in\mathbb{T}.
\end{align}
Let
\begin{align}\label{CanT}
  T=\begin{bmatrix}
      T_u & 0 \\
      0 & T_{c}
    \end{bmatrix}:\cH_u\oplus\cH_c\to\cH_u\oplus\cH_{c}
\end{align}
be the canonical decomposition of $T$ into unitary piece $T_u$ and completely nonunitary piece $T_{c}$. We show below that each $T_j$ is block diagonal with respect to the decomposition $\cH=\cH_u\oplus\cH_{c}$. Toward this end, we first suppose that with respect to the decomposition
$\cH=\cH_u\oplus\cH_{c}$ for each $j=1,2,\dots d$ we have
\begin{align}\label{Tjs}
T_j=\begin{bmatrix}
      A_j & B_j \\
      C_j & D_j
    \end{bmatrix} \text{ and }
    T_{(j)}=\begin{bmatrix}
      E_j & K_j \\
      L_j & H_j
    \end{bmatrix}.
\end{align}
Apply (\ref{FinalIneq}) to obtain that for each $j=1,2,\dots,d$,
\begin{align}\label{CrucIneq2}
 \nonumber \begin{bmatrix}
      0 & 0 \\
      0 & 2D_{T_{c}}^2
    \end{bmatrix}&-
    \operatorname{Re}\left(\omega \begin{bmatrix}
      A_j-E_j^*T_u & B_j-L_j^*T_{c} \\
      C_j-K_j^*T_u & D_j-H_j^*T_{c}
    \end{bmatrix}\right)\\&-
    \operatorname{Re}\left(\zeta \begin{bmatrix}
      E_j-A_j^*T_u & K_j-C_j^*T_{c} \\
      L_j-B_j^*T_u & H_j-D_j^*T_{c}
    \end{bmatrix}\right)\geq 0, \text{ for all }\omega,\zeta \in\mathbb{T}.
    \end{align}
    In particular, the $(1,1)$-entry in this inequality must satisfy
    \begin{align}\label{(11)}
      {\mathbb P}^j_{11}(\omega,\zeta):= \operatorname{Re}(\omega(A_j-E_j^*T_u ))+\operatorname{Re}(\zeta(E_j-A_j^*T_u))\leq 0, \text{ for all }\omega,\zeta \in\mathbb{T},
    \end{align}which implies that
    \begin{align*}
    {\mathbb P}^j_{11}(\omega,1)+ {\mathbb P}^j_{11}(\omega,-1)&=2\operatorname{Re}(\omega(A_j-E_j^*T_u ))\leq0 \text{ and}\\
    {\mathbb P}^j_{11}(1,\zeta)+{\mathbb P}^j_{11}(-1,\zeta)&=2\operatorname{Re}(\zeta(E_j-A_j^*T_u))\leq0.
    \end{align*}
It is an elementary exercise to show that, if a bounded operator $X$ such that $Re(\zeta X)\leq 0$ for all $\zeta\in\mathbb T$, then $X=0$
(see e.g.\ Lemma 2.4 in \cite{B-P-SR}). We apply this fact to conclude that
    \begin{align}\label{CrucIneq3}
     A_j=E_j^*T_u \text{ and }E_j=A_j^*T_u \text{ for each }j=1,2,\dots,d.
    \end{align}
    This shows that the $(1,1)$-entry of the matrix on the left-hand side of (\ref{CrucIneq2}) is zero. Since the
    matrix is positive semi-definite, the $(1,2)$-entry (and hence also the $(2,1)$-entry) is also zero, i.e., for all
    $\omega,\zeta \in\mathbb{T}$
    \begin{align*}
    {\mathbb P}^j_{12}(\omega,\zeta):= \omega(B_j-L_j^*T_{c})+\bar{\omega}(C_j^*-T_u^*K_j)+\zeta(K_j-C_j^*T_{c})
    +\bar{\zeta}( L_j^*-T_u^*B_j)=0,
    \end{align*}which in particular implies that
    \begin{align*}
      {\mathbb P}^j(\omega):={\mathbb P}^j_{12}(\omega,1)+ {\mathbb P}^j_{12}(\omega,-1)=
      2\omega(B_j-L_j^*T_{c})+2\bar{\omega}(C_j^*-T_u^*K_j)=0
    \end{align*}
    for every $\omega\in\mathbb{T}$. This implies the first two of the following equations while the last two are
    obtained similarly:
    \begin{align}\label{Cruceq}
      B_j=L_j^*T_{c},\quad C_j^*=T_u^*K_j,\quad K_j=C_j^*T_{c}\quad\text{and}\quad L_j^*=T_u^*B_j.
    \end{align}
Commutativity of each $T_j$ with $T$ gives
 \begin{align}\label{Cruceq2}
A_jT_u=T_uA_j,\quad B_jT_{c}=T_uB_j,\quad C_jT_u=T_{c}C_j\quad \text{and}\quad T_{c}D_j=D_jT_{c},
 \end{align}while commutativity of $T_{(j)}$ with $T$ implies
\begin{align}\label{Cruceq4}
E_jT_u=T_uE_j,\quad K_jT_{c}=T_uK_j,\quad L_jT_u=T_{c}L_j\quad \text{and}\quad T_{c}H_j=H_jT_{c}.
 \end{align}
Using the last equation in (\ref{Cruceq}) and the third equation in (\ref{Cruceq4}) we get
 $$
 B_j^*T_u^2=L_jT_u=T_{c}L_j=T_{c}B_j^*T_u.
 $$
 As $T_u$ is unitary, this leads to
 \begin{equation}  \label{Cruceq3}
 B_j^*T_u=T_{c}B_j^*.
 \end{equation}
  Using the second equality in (\ref{Cruceq2}) together with \eqref{Cruceq3} leads to
 \begin{align*}
T_{c}T_{c}^*B_j^*=T_{c}B_j^*T_u^*=B_j^*=B_j^*T_u^*T_u=T_{c}^*B_j^*T_{u}=T_{c}^*T_{c}B_j^*,
 \end{align*}
 which implies that $T_{c}$ is unitary on $\overline{\operatorname{Ran}}\, B_j^*$ for  $j=1,2,\dots,d$. Since $T_{c}$ is completely nonunitary, each
 $B_j$ must be zero. By similar arguments one can show that $C_j=0$, for each $j=1,2,\dots,d$. This completes the proof.
 \end{proof}

 \begin{remark}  \label{R:can-decom-pair}
 We note that a proof of Theorem \ref{Thm:CanDec}  is given in \cite{BS-Memoir} for the pair case ($d=2$).
 It is of interest to note that
 the general case can be reduced to the pair case simply by applying the result for the pair case to the special pair
 $(T_j, T_{(j)})$ for each $j=1, \dots, d$.  Our proof on the other hand is a direct multivariable proof.
  \end{remark}

  \begin{remark}  \textbf{Examples and special cases:}  If we consider the special case with $\underline{T}
  = \underline{V}$ is a commutative tuple of isometries $\underline{V} = (V_1, \dots, V_d)$ with product operator
  $V = V_1 \cdots V_d$ c.n.u.\ (i.e., $V$ is a pure isometry or shift operator), then the associated characteristic function
  $\Theta_V$ is zero, and the model theory presented here amounts to the BCL-model for commuting isometries as in
  Theorem  \ref{Thm:BCL}.  In detail, the characteristic triple collapses to the first component ${\mathbb G}$
  which has the additional structure of the form
  $$  G_{j1} = P_j^\perp U_j^*, \quad G_{j2} = U_j P_j$$
  for a collection of projection operators $P_j$ and unitary operators $U_j$ on a space $\cF$ ($j=1, \dots, d$)
  forming a BCL-tuple (Definition \ref{D:BCLtuple}) for which the associated isometric operator-tuple  $V_j$
  (with $W_j$ trivial for $j=1, \dots, d$) is commutative.   The difficulty in writing down examples is that there is no
  explicit way to write down such operator tuples ${\mathbb G}$ so that the associated isometric-tuple $\underline{V}$
  is commutative.

  As we have seen in Section \ref{S:BCL},
  given such a collection of  operators forming a BCL-tuple as in Definition \ref{D:BCLtuple} (with ${\mathbb W}$
  taken to be trivial for simplicity),
  the operators $V_j = M_{G_{j1}^* + z G_{j2}}$ ($j=1, \dots, d$) form an isometric tuple but there are no explicit
  criteria for deciding when it is the case that this is a commutative isometric tuple, unless $d=1,2$.

  Similarly from Theorem  \ref{Thm:reverse}, to construct examples of commutative contractive tuples,  it suffices to
  construct examples of admissible triples.  At its core, according to Definition \ref{D:admissible},
  an admissible triple consists of a pure contractive operator function $( \cD, \cD_*, \Theta)$ together with a
  collection of operators ${\mathbb G}   = \{ G_{j1}, G_{j2} \colon 1 \le j \le d\}$ on $\cD_*$, and
  a commutative unitary tuple ${\mathbb W}  = \{ W_j \colon 1 \le j \le d \}$ acting on
  $\overline{\Delta_\Theta L^2(\cD)}$ satisfying auxiliary conditions (1)-(5).
  While conditions (1) and (2) are not so difficult to analyze,   the joint-invariance property in condition (3)
  and the joint-commutativity property in condition (4)  are mysterious:  for a general $\Theta$ there is no apparent way
  to write down interesting explicit examples of potential admissible triples $({\mathbb G}, {\mathbb W}, \Theta)$
  which satisfy these additional properties, even for the $d=2$ case.  In case $\Theta$ is inner, the ${\mathbb W}$-component becomes trivial,  condition (1) is just the requirement that
  the operator pencil $G_j(z) = G_{j1}^*+ z G_{j2}$ have $H^\infty$-norm at most $1$ but
  one is still left with the nontrivial requirement (3) that $M_{G_{j1}^* + z  G_{j2}}$ leave the subspace
  $M_\Theta H^2(\cD)$ invariant.
  Unlike the case for the BCL-model for commutative isometric tuples, this flaw happens even in the $d=2$ case.

  An example which may be tractable is the case where the commutative contractive tuple $\underline{T}
  = (T_1, \dots, T_d)$ acts on a finite-dimensional Hilbert space $\cX$ and has a basis of joint eigenvectors.
  Similar examples are discussed in \cite{A-M-Dist-Var,  BDF3}.

  More detail on all these issues will appear in forthcoming work of the authors \cite{BS-Memoir}.
   \end{remark}

\noindent{\bf{Acknowledgement.}} This work was done when the second named author was visiting
Virginia Tech as an SERB Indo-US Postdoctoral Research Fellow. He wishes to thank
the Department of Mathematics, Virginia Tech for all the facilities provided to him.

\end{document}